\theoremstyle{plain}
\newtheorem{theorem}{Theorem}[section]
\newtheorem{definition}[theorem]{Definition}
\newtheorem{lemma}[theorem]{Lemma}
\newtheorem{corollary}[theorem]{Corollary}
\newtheorem{exemple}[theorem]{Example}
\theoremstyle{plain}
\theoremstyle{plain}
\theoremstyle{remark}
\newcommand{\fct}[5]{
    #1:\begin{cases}  #2  &\longrightarrow  #3 \\
         #4  &\longmapsto  #5
\end{cases}}
\newcommand{\R}{\mathbf{R}}
\newcommand{\Z}{\mathbf{Z}}
\newcommand{\N}{\mathbf{N}}
\newcommand{\C}{\mathbf{C}}
\newcommand{\bbP}{\mathbf{P}}
\newcommand{\Oo}{\mathcal{O}}
\title{Extensions of Gorenstein weighted projective 3-spaces and characterization of the primitive curves of their surface sections.}
\author{Bruno Dewer}
\date{}
\begin{document}

\maketitle

\begin{abstract}
We investigate the Gorenstein weighted projective spaces of dimension 3. Given such a space $\bbP$, our first focus is its maximal extension in its anticanonical model $\bbP \subset \bbP^{g+1}$, i.e., the variety $Y\subset \bbP^{g+1+r}$ of largest dimension such that $Y$ is not a cone and $\bbP$ is a linear section of $Y$. In \cite{DS} Thomas Dedieu and Edoardo Sernesi have computed the dimension of $Y$ by cohomological computations on the canonical curves inside $\bbP$. We give an explicit description of $Y$ in the cases where it was not known. Next, we examine the general anticanonical divisors of $\bbP$. These are K3 surfaces, not necessarily primitively polarized. We give a geometric characterization of the curve sections in their primitive polarization.
\end{abstract}

\section{Introduction}

The notion of extendability for projective varieties consists in the following.

\begin{definition} 
A projective variety $X\subset \bbP^N$ is \emph{extendable} if there exists $X_1\subset \bbP^{N+1}$ which is not a cone, such that $X$ is a hyperplane section of $X_1$.

Moreover, if $X_r\subset \bbP^{N+r}$ is not a cone and $X$ can be obtained as the intersection of $X_r$ with a linear subspace of dimension $N$, we say that $X_r$ is an $r$-extension of $X$, or also that $X$ has been extended $r$ times. If moreover there exists no extension of $X$ of larger dimension, we say that $X_r$ is maximal.
\end{definition}

The topic of this paper is the extendability of weighted projective spaces, more precisely those of dimension $3$ that are Gorenstein.

Given four coprime positive integers $a_0,a_1,a_2$ and $a_3$, the weighted projective space $\bbP = \bbP(a_0,a_1,a_2,a_3)$ is $\mathrm{Proj}(R)$ where $R = \C[x_0,x_1,x_2,x_3]$ endowed with the grading $\deg x_i = a_i$ for each $i\in \left\{ 0,1,2,3 \right\}$. By definition, it is Gorenstein if its anticanonical divisor class is Cartier, which holds if and only if all the $a_i$'s divide their sum (see for instance \cite[Theorem 3.3.4]{Do}). As mentioned in \cite{Pr}, among all the weighted projective spaces of dimension $3$, there are exactly $14$ which are Gorenstein.

Assume $\bbP \subset \bbP^{g+1}$ is Gorenstein and embedded by its anticanonical linear system, then its general hyperplane section $S$ is a K3 surface with canonical singularities. The induced polarization $(S,-K_\bbP{}|_{S})$ is of genus $g$, meaning that the general member $\Gamma$ of $-K_\bbP{}|_{S}$ is a canonical curve of genus $g$. As a consequence of this, any extension of $\bbP$ is also an extension of $\Gamma \subset \bbP^{g-1}$.

Consider then a smooth canonical curve $\Gamma \subset \bbP^{g-1}$ obtained as a linear section of $\bbP \subset \bbP^{g+1}$ and the number $\alpha(\Gamma,K_\Gamma)$ introduced in \cite{Lvo}, which can be computed as the corank of the Gauß-Wahl application of the polarization $(\Gamma,K_\Gamma)$; see \color{purple}Definition \ref{def:alpha}\color{black}. In this situation, it follows from \cite{BM} and \cite{Wah} that $\alpha(\Gamma,K_\Gamma)$ is nonzero. Then by \cite[Theorems 2.1, 2.17]{CDS} we have $\dim Y = 1 + \alpha(\Gamma,K_\Gamma)$, for $Y$ a maximal extension of $\Gamma$. This allows us to know exactly how many times $\bbP$ can be extended, by \cite[Corollary 6.4]{DS}.

The full list of the Gorenstein weighted projective $3$-spaces, as well as the maximal extensions which are known from \cite{DS}, is given below.

Notice that the space $\bbP(1,1,1,3,5,5,5)$ is denoted by $\bbP(1^3,3,5^3)$, as the weights $1$ and $5$ each appear three times, and the weight $3$ appears once. From now on, we will adopt this notation for brevity. We also adopt the following convention: when $\bbP$ is not extendable, we say that it is its own maximal extension.
$$
\begin{array}{|l|l|l|}
\hline
\bbP & \text{extendable?} & \text{maximal extension} \\
\hline
\bbP(1,1,1,1) & \text{no} & \text{itself} \\
\bbP(1,1,1,3) & \text{no} & \text{itself} \\
\bbP(1,1,4,6) & \text{no} & \text{itself} \\
\bbP(1,2,2,5) & \text{yes} & \text{sextic hypersurface of } \bbP(1^3,3,5^3) \\
\bbP(1,1,2,4) & \text{no} & \text{itself} \\
\bbP(1,3,4,4) & \text{yes} & \text{quartic hypersurface of } \bbP(1^4,3^4) \\
\bbP(1,1,2,2) & \text{no} & \text{itself} \\
\bbP(2,3,3,4) & \text{yes} & \text{cubic hypersurface of } \bbP(1^5,2^5) \\
\hdashline[8pt/3pt]
\bbP(1,4,5,10) & \text{yes} & \text{was not known} \\
\bbP(1,2,6,9) & \text{yes} & \text{was not known} \\
\bbP(1,2,3,6) & \text{no} & \text{itself} \\
\bbP(1,3,8,12) & \text{yes} & \text{was not known} \\
\bbP(1,6,14,21) & \text{yes} & \text{was not known} \\
\bbP(2,3,10,15) & \text{yes} & \text{was not known} \\
\hline
\end{array}
$$

\begin{definition} \label{def:moduli stack polarized K3}
Let $\mathcal K_g^{i}$ be the moduli stack of the polarized surfaces $(S,L)$ with $S$ a K3 surface and $L$ an ample line bundle (equivalently, an ample Cartier divisor) on $S$ such that the general member of $|L|$ is a genus $g$ curve and the index of $L$ in $\mathrm{Pic}(S)$ is equal to $i$. In other words, $L \simeq (L_\mathrm{prim})^{\otimes i}$ with $L_\mathrm{prim}$ a primitive line bundle on $S$. 
\end{definition}

We refer to \cite[\S 5]{Huy} for the construction of moduli stacks of polarized K3 surfaces.

\vspace{.2cm}
For all $g$ and $i$ we consider the function $\alpha : \mathcal K_g^{i} \to \mathbf Z$ given by $\alpha(S,L) = \alpha(\Gamma',K_{\Gamma'})-1$, where $\Gamma'$ is a general member of $|L|$. Given $\bbP$ a Gorenstein weighted projective space of dimension $3$ and $S$ a general anticanonical divisor of $\bbP$, let $g$ and $i_S$ respectively denote the genus and the index of the induced polarization $(S,-K_\bbP|_S)$. This polarization is then a member of the moduli stack $\mathcal K_g^{i_S}$. T.~Dedieu and E.~Sernesi have computed \cite[Proposition 6.2]{DS} that in each of the 14 cases, there is a constant $\alpha_g^{i_S}$ such that $\alpha$ takes the value $\alpha_g^{i_S}$ on a dense open subset of $\mathcal K_g^{i_S}$. The first 8 cases of the list above are those for which $\alpha(S,-K_\bbP|_S) = \alpha_g^{i_S}$, and we are going to examine the ones for which this equality doesn't hold.

The core results of this paper, proven in \hyperlink{section 4}{Section 4} and \hyperlink{section 5}{Section 5}, are summarized in the two following theorems.

\begin{theorem} \label{thm:maximal extensions}
Assume that the polarization $(S,-K_\bbP|_S)$ is not general in $\mathcal K_g^{i_S}$, in the sense that 
$$
\alpha(S,-K_\bbP|_S) > \alpha^{i_S}_g.
$$
Then $\bbP$ is one of the last six items of the list given above. Each of them admits a maximal extension $Y$ which has a description as follows.
$$
\begin{array}{|l|l|l|}
\hline
\bbP & Y & \textsl{dim}(Y) \\
\hline & & \\[-10pt]
\bbP(1,4,5,10) & \textsl{nongeneral quintic of } \bbP(1^3,2,4^3) & 5 \\
\bbP(1,2,6,9) & \textsl{nongeneral } 10 \textsl{-ic of } \bbP(1^2,3,5,9^2) & 4 \\
\bbP(1,2,3,6) & \bbP(1,2,3,6) & 3 \\
\bbP(1,3,8,12) & \textsl{nongeneral } 9 \textsl{-ic of } \bbP(1^2,3,4,8^2) & 4 \\
\bbP(1,6,14,21) & \textsl{nongeneral heptic of } \bbP(1^2,2,3,6^2) & 4 \\
\bbP(2,3,10,15) & \textsl{codim. } 2 \textsl{ complete intersection in a} & 5 \\
& \bbP(1^2,2,3,5^3)\textsl{-bundle over } \bbP^1 & \\
\hline
\end{array}
$$
\end{theorem}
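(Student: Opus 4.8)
The plan is to treat the six spaces one at a time, using the genus and index data $(g,i_S)$ recorded by Dedieu–Sernesi together with the numerical constraint $\dim Y = 1 + \alpha(\Gamma,K_\Gamma)$ for a maximal extension $Y$ of the canonical curve section $\Gamma$, and the chain of linear sections $\Gamma \subset S \subset \bbP \subset Y$. First I would reduce the problem to constructing, for each $\bbP$, an explicit variety $Y$ of the predicted dimension which is not a cone and which admits $\bbP$ as an iterated linear section; by \cite[Theorems 2.1, 2.17]{CDS} and \cite[Corollary 6.4]{DS} such a $Y$ is automatically a maximal extension, since the value $\alpha(S,-K_\bbP|_S)$ is already known to exceed $\alpha_g^{i_S}$ and hence forces $\dim Y > \dim(\text{generic maximal extension})$. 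So the real content is exhibiting the candidate $Y$ in each row of the table and verifying it has the stated dimension and section behaviour.

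The construction of the candidates I would obtain from the graded-ring description of $\bbP$ itself. Each Gorenstein $\bbP(a_0,a_1,a_2,a_3)$ in its anticanonical model is $\mathrm{Proj}$ of a graded ring whose generators and relations can be read off the weights; the anticanonical ring of the general K3 section $S$ and of the canonical curve $\Gamma$ are the corresponding quotient rings. The extension $Y$ should then be found by "adding variables'': one looks for a graded ring, finitely generated over $\C$, with a system of parameters whose quotient recovers the ring of $\bbP$, realized as a hypersurface (or, in the last case, a codimension-$2$ complete intersection) in a larger weighted projective space or weighted-projective-space bundle over $\bbP^1$. Concretely, for $\bbP(1,4,5,10)$ I would check that the sextic equation cutting out $\bbP(1,2,2,5)$ inside $\bbP(1^3,3,5^3)$ deforms/degenerates appropriately, and more generally I would match each row: the quintic in $\bbP(1^3,2,4^3)$, the $10$-ic in $\bbP(1^2,3,5,9^2)$, the $9$-ic in $\bbP(1^2,3,4,8^2)$, the heptic in $\bbP(1^2,2,3,6^2)$, and the codimension-$2$ complete intersection in the $\bbP(1^2,2,3,5^3)$-bundle over $\bbP^1$. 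In every case one computes $\dim Y$ directly from the ambient (number of weights minus $1$, minus the codimension) and exhibits the linear subspace whose intersection with $Y$ is $\bbP$; the word "nongeneral'' in the table signals that the defining equation(s) must be chosen in a proper subspace of the linear system, precisely so that the section is $\bbP$ rather than a generic member, and this is where one uses that $\alpha(S,-K_\bbP|_S)$ is strictly larger than the generic value. The case $\bbP(1,2,3,6)$ is degenerate: here one shows $\alpha(\Gamma,K_\Gamma)=2$ but nevertheless every extension is a cone, so $\bbP$ is its own maximal extension — this must be argued separately, presumably by a Gauss–Wahl / cohomological computation showing the relevant cokernel is spanned by "coning'' directions.

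That the first inclusion $\alpha(S,-K_\bbP|_S) > \alpha_g^{i_S}$ actually pins down the list to these six is immediate from \cite[Proposition 6.2]{DS}, which I would simply quote. The main obstacle will be the final row, $\bbP(2,3,10,15)$: there $Y$ is not a weighted projective hypersurface but a codimension-$2$ complete intersection inside a weighted-projective-space bundle over $\bbP^1$, so one must first describe this bundle (its rank-$6$ vector bundle over $\bbP^1$, i.e. the splitting type of the $\mathcal O_{\bbP^1}(d_i)$), then the two relative hypersurface classes cutting out $Y$, then verify that a suitable fibre together with a linear section recovers $\bbP(2,3,10,15)$ — and that the resulting $Y$ is not a cone. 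I also expect some care will be needed to confirm maximality intrinsically in a couple of rows where $\dim Y = 4$: one wants to rule out a further extension, which again follows from the equality $\dim Y = 1+\alpha(\Gamma,K_\Gamma)$ once $\alpha$ is computed, so I would make sure the Gauss–Wahl corank computation is carried out (or cited) for each of the six polarizations.
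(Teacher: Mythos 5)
Your overall skeleton is the right one — reduce to exhibiting, for each $\bbP$, an explicit $Y$ of dimension $1+\alpha(\Gamma,K_\Gamma)$ that contains $\bbP$ as an iterated linear section and is not a cone, and quote \cite[Proposition 6.2]{DS} for the reduction to six cases. (The case $\bbP(1,2,3,6)$ is in fact simpler than you suggest: since $\alpha(\Gamma,K_\Gamma)=2$, any maximal extension of $\Gamma$ already has dimension $3$, so the non-cone threefold $\bbP$ is itself maximal; no separate argument that ``every extension is a cone'' is needed.) However, two essential ingredients are missing from your plan. First, you give no mechanism that actually produces the ambient spaces in the table. The paper's route is to embed $\bbP$ by a suitable $n$-Veronese as a hypersurface of a $4$-dimensional weighted projective space $X$ which is a cone with vertex lying on $\bbP$, and then to project from that vertex, obtaining a birational map $\varphi:\bbP\dashrightarrow\bbP'$ onto a weighted projective $3$-space under which $S$ becomes a \emph{nongeneral} anticanonical divisor (e.g.\ $u_0f_4(\mathbf u,v)+u_1^5=0$ in $\bbP(1,1,1,2)$ for $\bbP(1,4,5,10)$) and $\Gamma$ a complete intersection of two equations of \emph{different} degrees. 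It is precisely this shape of equation --- one distinguished monomial plus a term divisible by low-weight coordinates --- that dictates which new variables to adjoin (here $u_0s_0+u_1s_1+u_2s_2=u_1^5$ in $\bbP(1^3,2,4^3)$) and explains the word ``nongeneral'' in the table. ``Adding variables to the anticanonical ring of $\bbP$'' without this intermediate contraction does not yield the listed ambients; and your aside about the sextic cutting out $\bbP(1,2,2,5)$ inside $\bbP(1^3,3,5^3)$ conflates case \#4 of the introduction with case \#9.

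Second, the non-cone verification and the very ampleness of the embedding line bundle are the technical heart of each row and cannot be left as a remark. For the quintic $Y\subset\bbP(1^3,2,4^3)$ one must exclude a vertex both inside and outside the locus where $\mathcal O(2)$ is Cartier (an intersection-number argument on putative lines $\ell$ with $\ell\cdot\mathcal O(4)=1$, plus an explicit change-of-variables contradiction on the defining equations); for the $10$-ic in $\bbP(1^2,3,5,9^2)$ the system $|\mathcal O(9)|$ has a base point on the ambient, so one must separately prove that its restriction to $Y$ is very ample before the degree and dimension counts mean anything. Finally, for $\bbP(2,3,10,15)$ a single ``adding variables'' step only yields $4$-dimensional extensions, one short of $1+\alpha(\Gamma,K_\Gamma)=5$: the paper needs \emph{two} distinct birational models (coming from the $3$-Veronese and the $2$-Veronese), producing two fourfolds $Y_1,Y_2$ through $\bbP$, which are then glued into a codimension-$2$ complete intersection in a $\bbP(1^2,2,3,5^3)$-bundle over $\bbP^1$; your plan names the bundle but gives no indication of where the two relative hypersurfaces or the $\bbP^1$-direction come from, nor of the dedicated non-cone argument this case requires.
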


Next, we consider $C$ a general member of the primitive divisor class $-\frac{1}{i_S}K_\bbP|_S$. We focus on the same cases as those of \color{purple}Theorem \ref{thm:maximal extensions}\color{black}, and to give an insight on the geometry of $S$, we provide a geometric characterization of $C$.

\begin{theorem}\label{thm:primitive curves characterization}
Let $\bbP$ be a Gorenstein weighted projective space and $S$ a general anticanonical divisor of $\bbP$ such that $\alpha(S,-K_\bbP|_S) > \alpha^{i_S}_g$. Then the general member $C$ of $|-\frac{1}{i_S}K_\bbP|_S|$ is as follows.
$$
\begin{array}{|l|l|}
\hline
\bbP & C \\
\hline & \\[-10pt]
\bbP(1,4,5,10) & \textsl{plane quintic with a total inflection point} \\
\bbP(1,2,6,9) & \textsl{smooth hyperelliptic curve of genus } 4\\
\bbP(1,2,3,6) & \textsl{normalization of a plane sextic with an oscnode} \\
\bbP(1,3,8,12) & \textsl{trigonal curve of genus } 7 \textsl{ with a total ramification point} \\
\bbP(1,6,14,21) & \textsl{blowup of a plane } 21\textsl{-ic curve at 8 heptuple points} \\
\bbP(2,3,10,15) & \textsl{normalization of a nodal } 6\textsl{-gonal curve of genus } 16 \textsl{ such that the} \\
 & g_6^1 \textsl{ has two members of the form } 6p \textsl{ and } 2p_1 + 2p_2 + 2p_3 \textsl{ respectively} \\
\hline
\end{array}
$$
Conversely, for all items on the list except $\bbP(1,2,3,6)$, any curve with the given description is isomorphic to a member of $|-\frac{1}{i_S}K_\bbP|_{S'}|$ for some K3 surface $S'\in |-K_\bbP|$.
\end{theorem}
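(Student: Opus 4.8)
The plan is to treat each of the six cases separately, using the explicit description of the maximal extension $Y$ from Theorem~\ref{thm:maximal extensions} together with the known combinatorics of the weighted projective space $\bbP$ itself. For a fixed $\bbP$ in the list, I would first write down the weighted homogeneous coordinate ring of $\bbP$ and the anticanonical linear system $|-K_\bbP|$, so that a general anticanonical divisor $S$ is cut out by an explicit weighted (hyper)surface equation. The primitive class $L_{\mathrm{prim}} = -\frac{1}{i_S}K_\bbP|_S$ is then identified concretely: one determines $i_S$ from Definition~\ref{def:moduli stack polarized K3} (it is the index computed in \cite[Proposition 6.2]{DS}), and $L_{\mathrm{prim}}$ is realized as the restriction to $S$ of a well-chosen Weil divisor on $\bbP$, typically one of the coordinate divisors $\{x_i = 0\}$ after suitable scaling. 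Restricting the projection of $\bbP$ from the relevant coordinate subspace (equivalently, the rational map given by a sublinear system of multiples of $L_{\mathrm{prim}}$) then exhibits $C \in |L_{\mathrm{prim}}|$ as a curve in a small projective space, and I read off the stated model (plane quintic, plane sextic, trigonal curve, etc.) by computing the degree and the gonality pencil from the grading.

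Next, for each case I would verify the \emph{special} feature listed in the table — the total inflection point, the oscnode, the total ramification point of the trigonal pencil, the heptuple points, or the two special members $6p$ and $2p_1+2p_2+2p_3$ of the $g^1_6$. These all come from the structure of $\bbP$ at its non-canonical (quotient) singularities: a singular point of $\bbP$ of type $\frac{1}{n}(a,b,c)$ lying on $S$ forces $C$, or its image in the projective model, to have a singularity or inflectionary behavior of a prescribed type, and the pencil computing the gonality is induced by the projection $\bbP \dashrightarrow \bbP(1,\dots)$ that kills the ambient variable of largest weight. Concretely, I would intersect $C$ with the exceptional/coordinate divisors of $\bbP$ and use adjunction on $S$ to pin down the local analytic type. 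The genus in each case is checked by the standard genus formula for curves on weighted K3 surfaces, or equivalently by $2g(C)-2 = L_{\mathrm{prim}}\cdot(L_{\mathrm{prim}}+K_S) = L_{\mathrm{prim}}^2$.

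For the converse direction (all cases except $\bbP(1,2,3,6)$, which is excluded because it is its own maximal extension and the argument via $Y$ degenerates), I would run the construction backwards. Starting from an abstract curve $C$ with the prescribed geometry, the special linear system on $C$ (the plane model, the trigonal pencil, the $g^1_6$, etc.) determines a polarized curve which, by \cite[Theorems 2.1, 2.17]{CDS} and the computation $\dim Y = 1 + \alpha(\Gamma, K_\Gamma)$, extends to a K3 surface; one shows the resulting polarized K3 $(S', L')$ has the right genus and index and that $-K_\bbP|_{S'} = (L')^{\otimes i_S}$, so that $S' \in |-K_\bbP|$ after embedding via the appropriate complete linear system. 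Equivalently, and more robustly, one parametrizes curves with the given description, checks the parameter count matches $\dim |{-}K_\bbP| + \dim(\text{family of primitive divisors})$, and invokes that the general $S \in |-K_\bbP|$ already produces such a $C$, so a dimension comparison forces every such $C$ to arise this way.

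The main obstacle I anticipate is the last case, $\bbP(2,3,10,15)$: here $Y$ is not a hypersurface but a codimension-$2$ complete intersection in a $\bbP(1^2,2,3,5^3)$-bundle over $\bbP^1$, so the identification of $C$ as the normalization of a nodal $6$-gonal curve of genus $16$ with two special members of the $g^1_6$ requires carefully analyzing how the two families of fibers of the bundle interact with the anticanonical sections — in particular showing the $g^1_6$ is \emph{not} free of base behavior but has exactly the two degenerate members $6p$ and $2p_1+2p_2+2p_3$ coming from the two points of $\bbP^1$ over which the fiber degenerates, and controlling the nodes of the projective $6$-gonal model. Establishing the converse in that case — that every such curve is realized — will likewise be the most delicate parameter count.
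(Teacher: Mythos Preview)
Your plan rests on the wrong tool. The paper does \emph{not} use the maximal extension $Y$ of Theorem~\ref{thm:maximal extensions} anywhere in the proof of Theorem~\ref{thm:primitive curves characterization}. Instead, the key input is the birational model $\varphi:\bbP\dashrightarrow\bbP'$ constructed in Section~3 (Tables~4 and~5), which realizes $S$ as a \emph{nongeneral} anticanonical divisor of a simpler weighted projective space $\bbP'$ with an explicit equation of the shape $u_0 f_d + (\text{monomial}) = 0$. Once $S$ sits in $\bbP'$ with this special equation, the primitive curve $C$ is cut out by a low-degree hypersurface of $\bbP'$, and substituting that hypersurface into the equation of $S$ immediately exhibits $C$ in a small weighted projective plane; the special feature (total inflection, total ramification, etc.) is read off directly from the monomial term in the equation of $S$, not from an analysis of the quotient singularities of $\bbP$. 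For $\bbP(1,2,3,6)$ and $\bbP(2,3,10,15)$ an additional birational projection is needed, and in $\bbP(1,6,14,21)$ the complementary sextic surface is identified as a del~Pezzo of degree~1.

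Your converse strategy also diverges from the paper and is unlikely to close. The paper does \emph{not} invoke extension theory or a parameter count: for each case it takes an abstract curve $C'$ with the stated description, chooses coordinates so that its equation has the required special shape, and then explicitly writes down a surface $S'\in\mathcal L = \varphi(|{-}K_\bbP|)$ containing $C'$ (essentially by ``undoing'' the substitution). Your proposed route through \cite{CDS} concerns canonical curves $\Gamma$, not the primitive curves $C$, and even if one produced some K3 extension of $C$ there is no mechanism in your outline forcing it to lie in $|{-}K_\bbP|$. Likewise, your explanation for excluding $\bbP(1,2,3,6)$ (``the argument via $Y$ degenerates'') and your diagnosis of the difficulty in $\bbP(2,3,10,15)$ (degenerate fibers of the $\bbP^1$-bundle) are both off: $Y$ plays no role, and the two special members of the $g^1_6$ in the last case come from the two coordinate lines $\{u_0=0\}$ and $\{u_1=0\}$ in $\bbP(1,1,2)$, reflecting the shapes of the two sextic equations $w^2=u_0 f_5$ and $v^3=u_1 h_5$ cutting out $C$ in $\bbP(1,1,2,3)$.
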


The organization of the article is as follows. In \hyperlink{section 2}{Section 2} we go over some information and definitions about the $3$-dimensional Gorenstein weighted projective spaces. In \hyperlink{section 3}{Section 3} we introduce a birational model of $\bbP$ which realizes $S$ as a nongeneral anticanonical divisor of another weighted projective $3$-space. This allows us in \hyperlink{section 4}{Section 4} to express $\Gamma$ as a complete intersection of two surfaces of different degrees and to prove \color{purple}Theorem 1.3\color{black}, constructing the maximal extension $Y$ as a hypersurface in a larger weighted projective space in all cases, except one: this particular case requires additional work. In \hyperlink{section 5}{Section 5} we consider the primitive polarization of $S$ and give a geometric characterization of the general curve $C$ in the linear system $|-\frac{1}{i_S}K_\bbP{}|_S|$, hence proving \color{purple}Theorem 1.4\color{black}.

\subsection{Notations and conventions}

Let us list here some notations and conventions that we will use throughout this paper.

\begin{enumerate}
    \item[$\bullet$] In the literature, the term \emph{K3 surface} commonly refers to a smooth surface $S$ with trivial canonical divisor and $h^1(S,\mathcal O_S) = 0$.

    \vspace{.2cm}
    However, these two conditions also make sense for a normal, possibly singular, surface. As already mentioned in the introduction above, the surface $S$ we obtain as a general anticanonical divisor of a Gorenstein weighted projective $3$-space $\bbP$ is expected to have canonical singularities (to be more precise, ADE singularities). However, it is normal and satisfies the conditions $K_S = 0$ and $h^1(S,\mathcal O_S)=0$. As other authors did before, we refer to such a surface $S$ as a K3 surface with ADE singularities, and we sometimes simply say K3 surface.

    \vspace{.2cm}
    Note that, in this case, one may consider a crepant resolution $S' \to S$ where $S'$ is a smooth K3 surface. This will be useful later on, for instance in our \color{purple}Lemma \ref{lem:hyperelliptic curve in K3}\color{black}.

    \item[$\bullet$] The exponential notation for weighted projective spaces, which was briefly mentioned in the introduction above, consists in the following. If a weight appears multiple times in a weighted projective space, then it may be denoted with an exponent; for instance, $\bbP(1^3,3,5^3)$ is $\bbP(1,1,1,3,5,5,5)$.

    \vspace{.2cm}
    Furthermore, when a weighted projective space is introduced with a choice of homogeneous weighted coordinates, then the coordinates may be written as a subscript. As an example, the space $\bbP(1,1,1,2)$ endowed with coordinates $[u_0:u_1:u_2:v]$ is written as $\bbP(1,1,1,2)_{[u_0:u_1:u_2:v]}$. Note that one letter is used for each weight: here, the $u_i$'s have degree $1$, while $v$ has weight $2$. When a letter appears multiple time, we allow ourselves to write them as a single letter in bold. According to this and to the exponential notation, we may write $\bbP(1^3,2)_{[\mathbf u:v]}$ instead of $\bbP(1,1,1,2)_{[u_0:u_1:u_2:v]}$.

    \item[$\bullet$] On a variety $X$ endowed with a line bundle $L$, we denote by $|L|$ the linear system $\bbP(H^0(X,L))$ of effective Cartier divisor associated with $L$; this notation has already been used in \color{purple}Definition \ref{def:moduli stack polarized K3}\color{black}. This is inspired by the classical notation $|D|$ for the linear system associated with a divisor $D$.
    
\end{enumerate}

\section{The Gorenstein weighted projective spaces of dimension $3$} \hypertarget{section 2}{}

This section is entirely dedicated to results which were already known facts. We provide proofs for the sake of exhaustiveness, and we will refer to those results in the remaining of the article.

We refer to \cite[§5, §6]{I} for basic facts, generalities and notations about weighted projective spaces. They are normal, $\mathbf Q$-factorial, Cohen-Macaulay projective schemes with quotient singularities which are natural generalizations of standard projective spaces.

Let $\bbP = \bbP(a_0,a_1,a_2,a_3)$ be a weighted projective $3$-space. Its anticanonical divisor class is that of degree $a_0+a_1+a_2+a_3$ surfaces and its Picard group is generated by $[\Oo_\bbP(l)]$ with $l = \mathrm{lcm}(a_0,a_1,a_2,a_3)$. Hence $K_\bbP$ is Cartier if and only if $a_i$ is a divisor of $a_0+a_1+a_2+a_3$ for all $i\in \left\{ 0,1,2,3 \right\}$.

\vspace{.2cm}
\noindent
\hypertarget{Remark}{}
\textbf{Remark.} If $\bbP$ is not Gorenstein, then the canonical divisor $K_\bbP$ is not Cartier around at least one \emph{coordinate point}, i.e., a point given by the vanishing of $3$ out of the $4$ homogeneous coordinates. As an example, consider $\bbP = \bbP(1,2,4,5)_{[u:v:s:t]}$, with $\mathcal O_\bbP(-K_\bbP) = \mathcal O_\bbP(12)$. Since $5$ is not a divisor of $12$, then any local section of $\mathcal O_\bbP(12)$ around $p_t = [0:0:0:1]$ is a quotient of polynomials with numerator a sum of monomials, all of which admit at least one of the coordinates $u,v,s$ as a factor. Hence such a local section vanishes at $p_t$.

\vspace{.2cm}
More generally, on a weighted projective $3$-space $\bbP = \bbP(a_0,a_1,a_2,a_3)$, the intersection form of $\mathbf Q$-Cartier divisors is given by
$$
\mathcal O_\bbP(1)^3 = \frac{1}{a_0a_1a_2a_3}.
$$
Moreover, up to isomorphism, we may assume that any three of the four weights $a_0,a_1,a_2,a_3$ are coprime. This condition is called \emph{well-formedness}.

\vspace{.2cm}
Assume that the canonical divisor $K_\bbP$ is Cartier, i.e., that $\bbP$ is Gorenstein. In this situation, we will prove in \color{purple}Theorem \ref{thm:anticanonical very ample} \color{black}that $\bbP$ is embedded in a projective space $\bbP^{g+1}$ by its anticanonical linear system. Let $\Gamma \subset \bbP^{g-1}$ be a curve section of $\bbP$ cut out by two general hyperplanes. The adjunction formula yields $K_\Gamma = -K_\bbP{}|_\Gamma.$ Hence $\Gamma$ in $\bbP^{g-1}$ is a canonical curve, and $\bbP$ can only be extended finitely many times, by \color{purple}Theorem \ref{thm:canonical curve max extension} \color{black}and \color{purple}Corollary \ref{cor:clifford index}\color{black}.

We list all the Gorenstein weighted projective spaces of dimension $3$ in \color{purple}\hyperlink{Table 1}{Table 1} \color{black}below, together with the following information. If $a_i$ are the weights of $\bbP$, $l=\mathrm{lcm}(a_0,a_1,a_2,a_3)$ and $\sigma = a_0 + a_1 + a_2 + a_3$, then the index $i$ of $-K_\bbP$ in $\mathrm{Pic}(\bbP)$ is equal to $\frac{\sigma}{l}$.

$$\hypertarget{Table 1}{}
\begin{array}{|l|l|l|l|l|}
\hline
\# & \bbP & l & \sigma & i \\
\hline & & & & \\[-10pt]
1 & \bbP(1,1,1,1) & 1 & 4 & 4 \\
2 & \bbP(1,1,1,3) & 3 & 6 & 2 \\
3 & \bbP(1,1,4,6) & 12 & 12 & 1 \\
4 & \bbP(1,2,2,5) & 10 & 10 & 1 \\
5 & \bbP(1,1,2,4) & 4 & 8 & 2 \\
6 & \bbP(1,3,4,4) & 12 & 12 & 1 \\
7 & \bbP(1,1,2,2) & 2 & 6 & 3 \\
8 & \bbP(2,3,3,4) & 12 & 12 & 1 \\
9 & \bbP(1,4,5,10) & 20 & 20 & 1 \\
10 & \bbP(1,2,6,9) & 18 & 18 & 1 \\
11 & \bbP(1,2,3,6) & 6 & 12 & 2 \\
12 & \bbP(1,3,8,12) & 24 & 24 & 1 \\
13 & \bbP(1,6,14,21) & 42 & 42 & 1 \\
14 & \bbP(2,3,10,15) & 30 & 30 & 1 \\
\hline
\end{array}
$$
\begin{center}
\footnotesize{Table 1}
\end{center}

\subsection{Extendability of $\Gamma$ and $\bbP$}

For each $\bbP$ in \color{purple}\hyperlink{Table 1}{Table 1}\color{black}, if $S$ is a general anticanonical divisor, then the couple $(S,-K_\bbP|_S)$ represents an element of the moduli stack $\mathcal K_g^{i_S}$; see \color{purple}Definition \ref{def:moduli stack polarized K3}\color{black}. We focus here on the last $6$ examples ($\#9$ to $\#14$) which are mentioned in \color{purple}Theorem \ref{thm:maximal extensions}\color{black}. A maximal extension of $\bbP$ in these cases was not known so far.

\begin{definition} \label{def:alpha}
Let $X\subset \bbP^N$ be a projective variety, and $L = \Oo_{\bbP^N}(1)|_X$. We introduce
$$
\alpha(X,L) = h^0(N_{X/\bbP^N}\otimes L^{-1}) - N - 1.
$$
So that, if $X = \Gamma$ is a canonical curve in $\bbP^{g-1}$, one has $\alpha(\Gamma,L) = \alpha(\Gamma,K_\Gamma)$.
\end{definition}

\begin{lemma}
In the case where $\Gamma$ is a canonical curve, it holds that $\alpha(\Gamma,K_\Gamma) = \mathrm{cork}(\Phi_{K_\Gamma})$ where $\Phi_{K_\Gamma}$ is the Gauß-Wahl map of the polarization $(\Gamma,K_\Gamma)$.
\end{lemma}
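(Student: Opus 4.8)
The plan is to rewrite both invariants as cohomology on $\Gamma$ and to match them using the Euler sequence of $\bbP^{g-1}$. Write $\mathcal{O}(1)=\mathcal{O}_{\bbP^{g-1}}(1)$. Since $\Gamma$ is canonically embedded, $\mathcal{O}(1)|_\Gamma=K_\Gamma$, the restriction $H^0(\bbP^{g-1},\mathcal{O}(1))\to H^0(\Gamma,K_\Gamma)$ is an isomorphism, and $\Gamma$ is non-hyperelliptic. Recall that the Gauß-Wahl map is $\Phi_{K_\Gamma}\colon\wedge^2H^0(\Gamma,K_\Gamma)\to H^0(\Gamma,\Omega^1_\Gamma\otimes K_\Gamma^{\otimes 2})$ and that $\Omega^1_\Gamma\otimes K_\Gamma^{\otimes 2}=K_\Gamma^{\otimes 3}$, so $\mathrm{cork}(\Phi_{K_\Gamma})=h^0(K_\Gamma^{\otimes 3})-\mathrm{rank}(\Phi_{K_\Gamma})$. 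On the other hand, Serre duality on $\Gamma$ gives $h^0(N_{\Gamma/\bbP^{g-1}}\otimes K_\Gamma^{-1})=h^1(N^\vee_{\Gamma/\bbP^{g-1}}\otimes K_\Gamma^{\otimes 2})$; together with Definition \ref{def:alpha} this reduces the statement to the identity $h^1(N^\vee_{\Gamma/\bbP^{g-1}}\otimes K_\Gamma^{\otimes 2})=g+\mathrm{cork}(\Phi_{K_\Gamma})$.

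First I would write down two short exact sequences on $\Gamma$, both twists of standard ones. Twisting the conormal sequence of $\Gamma\subset\bbP^{g-1}$ by $\mathcal{O}(2)|_\Gamma=K_\Gamma^{\otimes 2}$ gives
$$0\longrightarrow N^\vee_{\Gamma/\bbP^{g-1}}\otimes K_\Gamma^{\otimes 2}\longrightarrow\Omega^1_{\bbP^{g-1}}|_\Gamma\otimes K_\Gamma^{\otimes 2}\longrightarrow K_\Gamma^{\otimes 3}\longrightarrow 0,$$
while restricting the $\mathcal{O}(2)$-twisted Euler sequence to $\Gamma$ gives
$$0\longrightarrow\Omega^1_{\bbP^{g-1}}|_\Gamma\otimes K_\Gamma^{\otimes 2}\longrightarrow H^0(K_\Gamma)\otimes K_\Gamma\longrightarrow K_\Gamma^{\otimes 2}\longrightarrow 0.$$
From the second sequence, using that the multiplication map $\mu\colon H^0(K_\Gamma)^{\otimes 2}\to H^0(K_\Gamma^{\otimes 2})$ is surjective (Max Noether's theorem, as $\Gamma$ is non-hyperelliptic) and that $h^1(K_\Gamma^{\otimes 2})=0$, $h^1(K_\Gamma)=1$, I read off $h^1(\Omega^1_{\bbP^{g-1}}|_\Gamma\otimes K_\Gamma^{\otimes 2})=g$ and an isomorphism $H^0(\Omega^1_{\bbP^{g-1}}|_\Gamma\otimes K_\Gamma^{\otimes 2})\cong\ker(\mu)$. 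Decomposing $H^0(K_\Gamma)^{\otimes 2}=\mathrm{Sym}^2H^0(K_\Gamma)\oplus\wedge^2H^0(K_\Gamma)$ and noting that $\mu$ kills the alternating summand, this becomes $H^0(\Omega^1_{\bbP^{g-1}}|_\Gamma\otimes K_\Gamma^{\otimes 2})\cong\wedge^2H^0(K_\Gamma)\oplus I_2$, where $I_2$ is the space of quadrics through $\Gamma$.

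Next I would take the long exact sequence of the first display. Since $h^1(K_\Gamma^{\otimes 3})=0$, it yields $0\to\mathrm{coker}(q)\to H^1(N^\vee_{\Gamma/\bbP^{g-1}}\otimes K_\Gamma^{\otimes 2})\to H^1(\Omega^1_{\bbP^{g-1}}|_\Gamma\otimes K_\Gamma^{\otimes 2})\to 0$, where $q\colon H^0(\Omega^1_{\bbP^{g-1}}|_\Gamma\otimes K_\Gamma^{\otimes 2})\to H^0(K_\Gamma^{\otimes 3})$ is the map induced by the surjection of sheaves; hence $h^1(N^\vee_{\Gamma/\bbP^{g-1}}\otimes K_\Gamma^{\otimes 2})=\dim\mathrm{coker}(q)+g$. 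It then remains to identify $q$ through the splitting of the previous paragraph. The summand $\wedge^2H^0(K_\Gamma)$ is exactly the image of the restriction $H^0(\bbP^{g-1},\Omega^1_{\bbP^{g-1}}(2))=\wedge^2H^0(\mathcal{O}(1))\to H^0(\Omega^1_{\bbP^{g-1}}|_\Gamma\otimes K_\Gamma^{\otimes 2})$ (injective, being the restriction of the inclusion $\wedge^2H^0(\mathcal{O}(1))\hookrightarrow H^0(\mathcal{O}(1))^{\otimes 2}$), and on it $q$ is the classical realization of the Gauß-Wahl map through the Euler sequence, so $q|_{\wedge^2H^0(K_\Gamma)}=\Phi_{K_\Gamma}$; on the summand $I_2$, a quadric $Q$ vanishing on $\Gamma$ has $dQ|_\Gamma$ lying in $H^0(N^\vee_{\Gamma/\bbP^{g-1}}\otimes K_\Gamma^{\otimes 2})$, hence in $\ker(q)$. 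Therefore $\mathrm{rank}(q)=\mathrm{rank}(\Phi_{K_\Gamma})$ and $\dim\mathrm{coker}(q)=\mathrm{cork}(\Phi_{K_\Gamma})$, which with the reduction from the first paragraph gives $\alpha(\Gamma,K_\Gamma)=\mathrm{cork}(\Phi_{K_\Gamma})$.

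The main obstacle is this last identification: one has to verify carefully that the coboundary maps of the two twisted sequences are compatible with the textbook ``Wronskian'' description of $\Phi_{K_\Gamma}$, i.e.\ that $q$ restricts to $\Phi_{K_\Gamma}$ on the ambient summand and vanishes on the quadrics through $\Gamma$ because these contribute only conormal directions. This bookkeeping is essentially the content of the computations underlying \cite{CDS} (which in turn go back to Wahl and to Beauville--Mérindol); everything else is the routine cohomology of the Euler sequence together with Max Noether's theorem, which applies since the relevant $\Gamma$ are non-hyperelliptic canonical curves.
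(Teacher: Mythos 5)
Your proof is correct, but note that the paper does not actually prove this lemma: it defers to \cite{CDS}, Lemma 3.2, and your argument is precisely the standard one underlying that reference, namely Serre duality, the conormal and Euler sequences twisted by $K_\Gamma^{\otimes 2}$, Max Noether's theorem (valid because a canonical curve is by definition nonhyperelliptic), and the identification of the connecting map on the summand $\wedge^2 H^0(K_\Gamma)$ with the Wronskian form of the Gau\ss-Wahl map while the $I_2$ summand dies since $dQ|_\Gamma$ is a conormal section. The one step you flag as the main obstacle is indeed the crux, and it checks out, so the proposal matches the (cited) proof in essentially every respect.
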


We refer to §2 in \cite{BM}, and \cite{Wah} for the definition of the Gauß-Wahl map of a polarized curve and to \cite[Lemma 3.2]{CDS} for a proof of this lemma. The value of $\alpha(\Gamma,K_\Gamma)$ for $\Gamma$ a general curve linear section of any Gorenstein weighted projective $3$-space have been computed by T.~Dedieu and E.~Sernesi and we display these values in the relevant cases in \color{purple}\hyperlink{Table 2}{Table 2} \color{black}below. This allows us to know the dimension of any maximal extension of $\Gamma$ by the following theorem.

\begin{theorem}[\cite{CDS}, Theorem 2.1 \& Corollary 5.5] \label{thm:canonical curve max extension}
Let $\Gamma \subset \bbP^{g-1}$ be a smooth canonical curve with $g\geq 11$ and such that $\mathrm{Cliff}(\Gamma)>2$. Then $\Gamma$ is extendable only $\alpha(\Gamma,K_\Gamma)$ times, i.e., there exists $Y$ in $\bbP^{g-1+\alpha(\Gamma,K_\Gamma)}$ such that $\dim(Y) = 1+\alpha(\Gamma,K_\Gamma)$, which is a maximal extension of $\Gamma$.

In addition, there exists a maximal extension $Y$ of $\Gamma$ which is universal, meaning that for each surface extension $S$ of $\Gamma$, there is a unique $g$-plane $\Lambda \subset \bbP^{g-1+\alpha(\Gamma,K_\Gamma)}$ such that $\Gamma \subset \Lambda$ and $S = Y\cap \Lambda$.
\end{theorem}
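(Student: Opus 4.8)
\medskip
\noindent\textbf{Proof proposal.} I would split the statement into three parts: the bound $\dim Y \le 1 + \alpha(\Gamma,K_\Gamma)$ for an arbitrary extension $Y$, the reverse inequality realized by an explicit extension, and the universality clause; throughout one uses the identification $\alpha(\Gamma,K_\Gamma) = \mathrm{cork}(\Phi_{K_\Gamma})$ from the lemma above. For the upper bound the tool is Lvovski's inequality \cite{Lvo} (in the form of \cite[Theorem 2.1]{CDS}): if $X \subset \bbP^N$ is smooth, non-degenerate, with $H^1(X,\mathcal O_X) = 0$ and with $\alpha(X,L) < \codim(X,\bbP^N)$ for $L$ the hyperplane bundle, then every extension of $X$ has dimension at most $\dim X + \alpha(X,L)$. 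For $X = \Gamma$ a canonical curve the vanishing $H^1(\mathcal O_\Gamma) = 0$ is automatic, and the hypotheses $g \ge 11$ and $\mathrm{Cliff}(\Gamma) > 2$ serve to keep $\Gamma$ away from the low-Clifford-index cases (hyperelliptic, trigonal, plane quintic) where the Gauß-Wahl map degenerates, and to guarantee, via cohomological estimates on $N_{\Gamma/\bbP^{g-1}}(-1)$, the numerical hypothesis $\alpha(\Gamma,K_\Gamma) < g-2 = \codim(\Gamma,\bbP^{g-1})$; Lvovski then yields $\dim Y \le 1 + \alpha(\Gamma,K_\Gamma)$.

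For the matching lower bound --- the substantial part --- I would begin from the lemma: the space of first-order extensions of $\Gamma$, modulo the trivial one given by the cone, has dimension $\alpha(\Gamma,K_\Gamma) - 1$, and the problem is to integrate these into an honest variety of dimension $1 + \alpha(\Gamma,K_\Gamma)$. Under the standing hypotheses this is carried out, as in \cite{CDS}, by a structural analysis: an extendable $\Gamma$ necessarily lies on a K3 surface $S$ with $K_S = 0$ --- here Wahl's non-surjectivity theorem \cite{Wah} and its partial converse \cite{BM}, together with classification results of Mukai type, enter --- and one then extends the polarized K3 $(S,L)$ to a Fano threefold when this is allowed, that threefold to a higher-dimensional Mukai variety, and so on; the exact sequences relating the conormal twists at consecutive stages show that each extension step consumes exactly one unit of $\alpha$, so the process terminates at a variety $Y$ with $\dim Y = 1 + \alpha(\Gamma,K_\Gamma)$ having $\Gamma$ as a linear section. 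Together with the upper bound this forces $Y$ to be maximal and shows that $\Gamma$ is extendable exactly $\alpha(\Gamma,K_\Gamma)$ times.

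For universality I would arrange the construction so that $Y$ admits a surjection from the total space of the universal family of extensions of $\Gamma$; then any surface extension $S'$ of $\Gamma$ occurs as $Y \cap \Lambda$, with $\Lambda$ necessarily equal to the linear span $\langle S'\rangle \cong \bbP^g$, hence unique, while conversely a dimension count shows that a $g$-plane $\Lambda \supset \Gamma$ meeting $Y$ properly cuts out a surface extension. The principal obstacle is the lower bound: one must simultaneously control the obstructions to integrating the infinitesimal extensions and make the chain of K3, Fano and Mukai extensions explicit enough to count dimensions --- this is exactly where the hypotheses $g \ge 11$ and $\mathrm{Cliff}(\Gamma) > 2$ become indispensable, and it is the technical core of \cite{CDS}.
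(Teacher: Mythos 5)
The paper does not actually prove this statement: it is imported verbatim from Ciliberto--Dedieu--Sernesi \cite{CDS} (Theorem 2.1 and Corollary 5.5), so there is no in-paper argument to compare yours against, and the only fair benchmark is the proof in \cite{CDS} itself. Measured against that, your treatment of the upper bound is essentially correct: the Zak--Lvovski inequality in the form of \cite[Theorem 2.1]{CDS} gives $\dim Y \le 1+\alpha(\Gamma,K_\Gamma)$ once one knows $\alpha(\Gamma,K_\Gamma) = \mathrm{cork}(\Phi_{K_\Gamma}) < g-2 = \codim(\Gamma,\bbP^{g-1})$, and the hypotheses $g\ge 11$ and $\mathrm{Cliff}(\Gamma)>2$ are indeed what secures this numerical condition.

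The lower bound and the universality clause are where your route diverges and, as proposed, would fail. You suggest climbing a ladder $\Gamma\subset S\subset(\text{Fano }3\text{-fold})\subset(\text{Mukai variety})\subset\cdots$; this is Mukai's classification strategy, and it is unavailable in the generality of the statement, since it requires the K3 extensions of $\Gamma$ to be smooth, primitively polarized and of genus in Mukai's range. The theorem is applied in this very paper to curves whose surface extensions are singular K3s and whose maximal extensions are weighted hypersurfaces and weighted complete intersections --- nothing on Mukai's list --- so an argument resting on that classification cannot be the right one. What \cite{CDS} actually does is integrate the space of first-order extensions directly: each element of $\mathrm{coker}(\Phi_{K_\Gamma})$ corresponds to a ribbon over $\Gamma$, every such ribbon is a hyperplane section of an honest surface (integrability, after Arbarello--Bruno--Sernesi), and the universal extension is produced in a single step as the projectivization of a graded deformation of the affine cone over $\Gamma$ parametrized by $\mathrm{coker}(\Phi_{K_\Gamma})$; no intermediate Fano or Mukai variety enters. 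Finally, your universality argument is incomplete: a surjection from the family of extensions onto $Y$ does not by itself give \emph{uniqueness} of the $g$-plane $\Lambda$ with $S=Y\cap\Lambda$. The correct mechanism, used in the paper's Lemma \ref{lem:universalcharacterization}, is that $\Lambda\mapsto Y\cap\Lambda$ defines a map between two copies of $\bbP^{\alpha-1}$ (the $g$-planes through $\Gamma$ on one side, $\bbP(\mathrm{coker}(\Phi_{K_\Gamma}))$ on the other) induced by a linear map, and one must check that no linear surface section of $Y$ is a cone over $\Gamma$ for this map to be well defined and an isomorphism; this verification is a genuine step, not a formality.
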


We make the comment that the universal extension of a smooth canonical curve of genus at least $11$ and Clifford index at least $3$ is unique up to isomorphism, by the construction provided in \cite[\S 5]{CDS}.

\vspace{.2cm}
Let us consider $\Gamma$ a general linear curve section of $\bbP \subset \bbP^{g+1}$ where $\bbP$ is a Gorenstein weighted projective $3$-space. Hence, $\Gamma$ is a general hyperplane section of a K3 surface $S\subset \bbP$. Such a surface only has isolated singularities, so by Bertini's Theorem, $\Gamma$ is smooth. The values of $g$ are listed in \color{purple}\hyperlink{Table 2}{Table 2} \color{black}below, and in each case we have $g\geq 11$ and $\mathrm{Cliff}(\Gamma)>2$ by \color{purple}Corollary \ref{cor:clifford index}\color{black}, to the effect that \color{purple}Theorem \ref{thm:canonical curve max extension} \color{black}applies to $\Gamma$.

As a consequence, we know that any extension of $\bbP$ of dimension $1+\alpha(\Gamma,K_\Gamma)$ is maximal. In particular, in any case for which $\alpha(\Gamma,K_\Gamma) = 2$, the threefold $\bbP$ is not extendable and it is the universal extension of $\Gamma$.

We list in \color{purple}\hyperlink{Table 2}{Table 2} \color{black}examples \#9 to \#14 coupled with the data of $i_S$, the genera of $\Gamma$ and $C$, where $\Gamma$ is a general member of $-K_\bbP|_S$ and $C$ is a general member of $-\frac{1}{i_S}K_\bbP|_S$. The value of $\alpha(\Gamma,K_\Gamma)$ and the datum of the singular points of $S$ are also provided.

$$
\hypertarget{Table 2}{}
\begin{array}{|l|l|l|l|l|l|l|}
\hline
\# & \bbP & i_S & g=g(\Gamma) & g(C) & \alpha(\Gamma,K_\Gamma) & \mathrm{Sing}(S) \\
\hline & & & & & & \\[-10pt]
9 & \bbP(1,4,5,10) & 2 & 21 & 6 & 4 & A_1,2A_4 \\
10 & \bbP(1,2,6,9) & 3 & 28 & 4 & 3 & 3A_1,A_2 \\
11 & \bbP(1,2,3,6) & 2 & 25 & 7 & 2 & 2A_1,2A_2 \\
12 & \bbP(1,3,8,12) & 2 & 25 & 7 & 3 & 2A_2,A_3 \\
13 & \bbP(1,6,14,21) & 1 & 22 & 22 & 3 & A_1,A_2,A_6 \\
14 & \bbP(2,3,10,15) & 1 & 16 & 16 & 4 & 3A_1,2A2,A_4 \\
\hline
\end{array}
$$
\begin{center}
\footnotesize{Table 2}
\end{center}

In each of those cases, we will see in \color{purple}Theorem \ref{thm:anticanonical very ample} \color{black}that the anticanonical divisor $-K_\bbP$ is very ample. Hence it embeds $\bbP$ in $\bbP^{g+1}$ as a variety of degree $(-K_\bbP)^3 = 2g-2$. In that model, recall that $\bbP$ can only be extended $\alpha(\Gamma,K_\Gamma)-2$ times. The only one which is not extendable in our list is $\bbP(1,2,3,6)$.

\subsection{The very ampleness of $-K_\bbP$}

Let us prove now that $-K_\bbP$ is very ample in the cases $\#9$ to $\#14$ given above. For all those spaces, this requires the nonhyperellipticity of $\Gamma$. The proof of the nonhyperellipticity of $\Gamma$ and the very ampleness of $K_\Gamma$ requires that we state first the following lemma.

\begin{lemma} \label{lem:hyperelliptic curve in K3}
Let $S$ be a K3 surface (possibly with ADE singularities) and $\Gamma$ a smooth curve of genus at least $2$ contained in the smooth locus of $S$. If $\Gamma$ is hyperelliptic, then there exists a line bundle $\mathcal J$ on $S_\mathrm{smooth}$ such that $|\mathcal J|_\Gamma|$ is a $g^1_2$ (i.e., a pencil of degree $2$ divisors).
\end{lemma}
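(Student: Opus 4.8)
The plan is to produce the pencil $|\mathcal{J}|_\Gamma|$ by extending the $g^1_2$ on $\Gamma$ across the surface, using the crepant resolution to reduce to the smooth K3 case and then pushing back down. Let $\pi\colon S'\to S$ be a crepant resolution with $S'$ a smooth K3 surface; since $\Gamma$ lies in $S_\mathrm{smooth}$, the strict transform $\Gamma'\subset S'$ maps isomorphically to $\Gamma$, and I will identify the two. Write $g^1_2 = |A|$ for the hyperelliptic pencil on $\Gamma'\cong\Gamma$, so $A$ is a divisor of degree $2$ with $h^0(\Gamma',A)=2$. The goal is to find a line bundle $M$ on $S'$ with $M|_{\Gamma'}\simeq A$, and then set $\mathcal{J} = \pi_* M$ (or rather the line bundle on $S_\mathrm{smooth}$ obtained by restricting $M$ to $S'\setminus\mathrm{Exc}(\pi)\cong S_\mathrm{smooth}$) — the point being that a line bundle on a smooth K3 surface restricts to $\Gamma'$ with image exactly $A$, hence $|\mathcal{J}|_\Gamma|$ contains the $g^1_2$.

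The key step is the surjectivity of the restriction map $\mathrm{Pic}(S')\to\mathrm{Pic}(\Gamma')$ after twisting, which I would get from the exponential sequence and the triviality of the canonical bundle. Consider the exact sequence $0\to \mathcal{O}_{S'}(-\Gamma')\to\mathcal{O}_{S'}\to\mathcal{O}_{\Gamma'}\to 0$ and, more to the point, the sheaf sequence $0\to \mathcal{O}_{S'}\to \mathcal{O}_{S'}(\Gamma')\to \mathcal{O}_{\Gamma'}(\Gamma')\simeq N_{\Gamma'/S'}\to 0$; by adjunction and $K_{S'}=0$ we have $N_{\Gamma'/S'}\simeq K_{\Gamma'}$. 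The standard argument (as in Saint-Donat / Mayer) is that $\mathrm{Pic}(S')$ surjects onto $\mathrm{Pic}(\Gamma')$ in the relevant degree because $H^1(S',\mathcal{O}_{S'})=0$: more precisely, from $H^1(S',\mathcal{O}_{S'})=0$ one gets that the restriction $\mathrm{Pic}(S')\to\mathrm{Pic}(\Gamma')$ on the level of a fixed numerical class is surjective once $H^2(S',\mathcal{O}_{S'}(D-\Gamma'))$ vanishes for a suitable $D$. I would instead take the cleaner route: since $\Gamma'$ is a smooth curve on a K3, the line bundle $\mathcal{O}_{S'}(\Gamma')$ restricts to $K_{\Gamma'}$; as $\Gamma'$ is hyperelliptic, $K_{\Gamma'}=(g-1)A$, so $\mathcal{O}_{S'}((g-1)\Gamma')|_{\Gamma'}=(g-1)A$. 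Hence it suffices to show $\mathcal{O}_{S'}(\Gamma')|_{\Gamma'}$ has a "$(g-1)$-st root" along $\Gamma'$ coming from the Picard group of $S'$ — equivalently, to directly construct $M$ with $M|_{\Gamma'}=A$ by lifting the class $A\in\mathrm{Pic}(\Gamma')$.

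The cleanest and most robust version of the construction, and the one I would actually write, uses the hyperelliptic involution geometrically. The pencil $|A|$ gives a degree-$2$ map $\Gamma'\to\mathbb{P}^1$; I would like this to be the restriction of an elliptic (genus-one) pencil on $S'$, or at least to be cut by a pencil of curves on $S'$. Run the following: let $|L| = |\mathcal{O}_{S'}(\Gamma')|$ be the polarization, of genus $g$; consider whether the hyperelliptic linear series $|A|$ on $\Gamma'$ is the restriction of a pencil $|E|$ with $E$ a primitive class of arithmetic genus $1$ on $S'$ with $E\cdot\Gamma'=2$. If such $E$ exists one sets $\mathcal{J}=\mathcal{O}_{S'}(E)|_{S_\mathrm{smooth}}$ and is done; the content is that hyperellipticity of a hyperplane section of a (resolved) K3 forces such an $E$ to exist. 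This is exactly the classical dichotomy for linear systems on K3 surfaces (Saint-Donat): if $|L|$ has base-point-free but non-very-ample behavior of the hyperelliptic type on a general member, then there is an elliptic pencil $|E|$ with $L\cdot E = 2$. I would cite / adapt Saint-Donat's analysis on $S'$ and then restrict $E$ to $\Gamma'$, noting $h^0(\Gamma',\mathcal{O}_{S'}(E)|_{\Gamma'})\ge 2$ and $\deg = 2$, so it is a $g^1_2$.

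The main obstacle I anticipate is the one genuine input: showing that hyperellipticity of the smooth curve $\Gamma'$ on the \emph{smooth} K3 $S'$ really does produce the elliptic pencil $E$ on $S'$ (rather than, say, some pathology where $\Gamma'$ is hyperelliptic "by accident" while the polarization $L$ is very ample on $S'$). Here I would use that $\Gamma'\in|L|$ is a \emph{general} member (the lemma is applied in that setting in the paper, to a general anticanonical surface section), so a cohomology-and-Bertini argument forces the hyperelliptic pencil to vary in a family over $|L|$, and a monodromy / incidence-variety dimension count then forces it to come from a global class on $S'$; this global class has self-intersection $0$ (two general members of the family are disjoint or meet only at base points) and genus $1$, giving the elliptic pencil. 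Everything else — the reduction to $S'$, the identification $N_{\Gamma'/S'}=K_{\Gamma'}$, the final restriction back to $S_\mathrm{smooth}$ — is routine. I would also double-check the edge case $g(\Gamma)=2$, where $|A|=|K_\Gamma|$ itself and the claim is essentially immediate by taking $\mathcal{J}=\mathcal{O}_{S'}(\Gamma')|_{S_\mathrm{smooth}}$.
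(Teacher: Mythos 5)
Your operative argument is essentially the paper's. The paper's entire proof consists of passing to a crepant resolution $S'\to S$, so that $\Gamma$ becomes a smooth curve on a smooth K3 surface, and then citing the Main Theorem of Green--Lazarsfeld in the case $\mathrm{Cliff}(\Gamma)=0$; your final step --- pass to $S'$, invoke Saint-Donat's classification of hyperelliptic polarizations to produce a class $E$ with $E^2=0$ and $E\cdot\Gamma=2$, restrict $E$ to $\Gamma$, and descend along $S'\setminus\mathrm{Exc}(\pi)\simeq S_{\mathrm{smooth}}$ --- is the same move with an interchangeable reference (Green--Lazarsfeld is the strengthening of Saint-Donat that the paper prefers). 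The preliminary detours in your write-up (surjectivity of $\mathrm{Pic}(S')\to\mathrm{Pic}(\Gamma')$, the ``$(g-1)$-st root'' of $\mathcal O_{S'}(\Gamma')|_{\Gamma'}$) are dead ends, as you seem to recognize, and the genus-$2$ edge case is handled correctly.

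Two cautions. First, the lemma assumes nothing about $\Gamma$ being general in $|\mathcal O_{S'}(\Gamma')|$, so your fallback justification of the key step via ``$\Gamma'$ is a general member'' plus a monodromy and incidence-variety count would, taken literally, prove a weaker statement than the one asserted; you should rest entirely on the citation, since both Saint-Donat's and Green--Lazarsfeld's theorems concern an arbitrary smooth member of the linear system and the constancy of the gonality over $|\mathcal O_{S'}(\Gamma')|$ is part of their \emph{conclusions}, not an input. Second, the dichotomy you quote omits Saint-Donat's exceptional case $\mathcal O_{S'}(\Gamma')\equiv 2B$ with $B^2=2$ (genus $5$), where no class of square zero meets $\Gamma'$ in degree $2$ and the image of $\mathrm{Pic}(S')\to\mathrm{Pic}(\Gamma')$ only contains $2\cdot g^1_2$, so that the statement of the lemma actually fails; this blind spot is shared by the paper's own one-line proof via Green--Lazarsfeld (whose line bundle computing the Clifford index is there a $g^2_4$, not a $g^1_2$) and is harmless for the genera in which the lemma is applied, but neither argument establishes the lemma in the full generality in which it is stated.
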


A proof of this lemma relies on \cite[Main Theorem]{GL}, by the fact that $\Gamma$ is hyperlliptic iff. $\mathrm{Cliff}(\Gamma) = 0$, in which case there is a line bundle on $S$ whose restriction to $\Gamma$ is a pencil of degree $2$. The main Theorem of \cite{GL} is stated for smooth projective curves on smooth projective K3 surfaces; when $\Gamma$ is a smooth projective curve contained in the smooth locus of a singular K3 surface $S$ (which is our setting in \color{purple}Lemma \ref{lem:hyperelliptic curve in K3} \color{black}above), the argument holds by taking a resolution $S' \to S$ with $S'$ a smooth K3 surface. 

We will apply this lemma together with the known fact that, for a given curve $\Gamma$ of genus $g\geq 2$, $K_\Gamma$ is very ample if and only if $\Gamma$ is nonhyperelliptic (see \cite[Proposition IV.5.2]{Har}).

\begin{theorem} \label{thm:canonical very ample}
Let $\bbP$ be one of the Gorenstein weighted projective $3$-spaces listed in \color{purple}\hyperlink{Table 2}{Table 2}\color{black}. Then the general linear curve section $\Gamma \subset \bbP$ in nonhyperelliptic and $K_\Gamma$ is very ample.
\end{theorem}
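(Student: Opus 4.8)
The plan is to reduce the statement to the \emph{nonhyperellipticity} of $\Gamma$. Indeed $g \geq 11 \geq 2$ in all six cases by \hyperlink{Table 2}{Table 2}, so once $\Gamma$ is known to be nonhyperelliptic, \cite[Proposition IV.5.2]{Har} gives that $K_\Gamma$ is very ample, and $K_\Gamma = -K_\bbP|_\Gamma$ by adjunction. So I would argue by contradiction, assuming $\Gamma$ hyperelliptic, and use \color{purple}Lemma \ref{lem:hyperelliptic curve in K3}\color{black}\ together with the divisor class group of a general anticanonical surface.

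Assume then that $\Gamma$ is hyperelliptic. Since $\Gamma$ sits in the smooth locus of the K3 surface $S$, \color{purple}Lemma \ref{lem:hyperelliptic curve in K3}\color{black}\ produces a line bundle $\mathcal J$ on $S_{\mathrm{smooth}}$ with $|\mathcal J|_\Gamma|$ a $g^1_2$. As $S_{\mathrm{smooth}}$ is $S$ minus finitely many points, $\mathrm{Pic}(S_{\mathrm{smooth}}) = \mathrm{Cl}(S)$, and for a general anticanonical divisor $S$ one has $\mathrm{Cl}(S) = \mathbf{Z}\cdot[\mathcal O_\bbP(1)|_S]$ --- this is the single nontrivial input, a Grothendieck--Lefschetz / Noether--Lefschetz type statement for the family $|-K_\bbP|$, to which I return below. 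Hence $\mathcal J \cong \mathcal O_\bbP(m)|_{S_{\mathrm{smooth}}}$ for some integer $m > 0$, and, writing $\sigma = a_0+a_1+a_2+a_3$ and using $-K_\bbP \equiv \sigma\,\mathcal O_\bbP(1)$ together with $\mathcal O_\bbP(1)^3 = (a_0a_1a_2a_3)^{-1}$,
$$
2 \;=\; \deg_\Gamma\!\big(\mathcal J|_\Gamma\big) \;=\; m\cdot\big(\mathcal O_\bbP(1)|_S\cdot_S\Gamma\big) \;=\; m\cdot\mathcal O_\bbP(1)\cdot(-K_\bbP)^2 \;=\; m\cdot\frac{\sigma^2}{a_0a_1a_2a_3}.
$$
A direct computation gives $\sigma^2/(a_0a_1a_2a_3) = 2,\,3,\,4,\,2,\,1,\,1$ for $\#9,\dots,\#14$ respectively.

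For $\bbP(1,2,6,9)$ and $\bbP(1,2,3,6)$ the identity $2 = 3m$, resp.\ $2 = 4m$, has no integral solution, so we are already done. In the four remaining cases $m$ is forced --- $m = 1$ for $\bbP(1,4,5,10)$ and $\bbP(1,3,8,12)$, and $m = 2$ for $\bbP(1,6,14,21)$ and $\bbP(2,3,10,15)$ --- and in particular $m < \sigma$. Then the restriction sequences
$$
0\to\mathcal O_\bbP(m-\sigma)\to\mathcal O_\bbP(m)\to\mathcal O_\bbP(m)|_S\to 0,\qquad
0\to\mathcal O_\bbP(m-\sigma)|_S\to\mathcal O_\bbP(m)|_S\to\mathcal O_\bbP(m)|_\Gamma\to 0
$$
(the second obtained from $\mathcal O_S(-\Gamma) = \mathcal O_\bbP(-\sigma)|_S$), combined with the vanishings $H^i(\bbP,\mathcal O_\bbP(k)) = 0$ for $i = 1,2$ and all $k$ valid on any weighted projective $3$-space, yield $h^0(\Gamma,\mathcal O_\bbP(m)|_\Gamma) = h^0(\bbP,\mathcal O_\bbP(m)) = 1$, the degree-$m$ part of the coordinate ring being spanned by a single monomial in $x_0$. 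This contradicts $|\mathcal J|_\Gamma| = |\mathcal O_\bbP(m)|_\Gamma|$ being a $g^1_2$. Therefore $\Gamma$ is nonhyperelliptic and $K_\Gamma$ is very ample.

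The main obstacle is the class group computation $\mathrm{Cl}(S) = \mathbf{Z}\cdot[\mathcal O_\bbP(1)|_S]$ for a general anticanonical surface: this has to be extracted either from a Grothendieck--Lefschetz theorem for divisor class groups of normal projective varieties applicable to anticanonical divisors in these Gorenstein weighted projective $3$-spaces, or from an explicit Noether--Lefschetz argument (a suitable degeneration) for each family. Note that weakening it to $\mathrm{rk}\,\mathrm{Cl}(S) = 1$ would cost nothing: one replaces $\mathcal O_\bbP(1)|_S$ by a generator of $\mathrm{Cl}(S)$, and the degree equation above still pins $\mathcal J|_\Gamma$ down to $\mathcal O_\bbP(1)|_\Gamma$ or $\mathcal O_\bbP(2)|_\Gamma$. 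The rest is a one-line cohomology computation in each of the six cases.
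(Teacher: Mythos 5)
Your reduction to nonhyperellipticity and your use of Lemma \ref{lem:hyperelliptic curve in K3} are exactly the paper's starting point, but your argument rests on one load-bearing claim that you do not prove: that the general $S\in|-K_\bbP|$ satisfies $\mathrm{Cl}(S)=\mathbf Z\cdot[\mathcal O_\bbP(1)|_S]$ (or at least $\mathrm{rk}\,\mathrm{Cl}(S)=1$). This is a genuine gap. Grothendieck--Lefschetz does not apply to surfaces in threefolds; what you need is a Noether--Lefschetz theorem for divisor class groups of \emph{anticanonical} (hence low-degree) surfaces in these specific singular weighted projective $3$-spaces, valid a priori only for the \emph{very} general member, and with hypotheses (positivity of $|L|$, control of the singularities of the general member) that must be checked case by case. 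Nothing of the sort is established in the paper or obviously quotable in the required form, and your own text concedes the point ("has to be extracted either from\ldots{} or from an explicit Noether--Lefschetz argument"). Until that input is supplied, the proof is incomplete --- and it is indispensable precisely in cases \#13 and \#14, where $i_S=1$ and no divisibility trick is available. A smaller inaccuracy: the fallback "$\mathrm{rk}\,\mathrm{Cl}(S)=1$ costs nothing" is not quite right for \#10 and \#11, since a generator could a priori be a proper fraction of $\mathcal O_\bbP(1)|_S$ and the integrality contradiction $2=3m$, resp.\ $2=4m$, would no longer be automatic.

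It is instructive to compare with how the paper gets by with much less. For \#9--\#12 it uses only the \emph{index} $i_S\geq 2$ of $-K_\bbP|_S$ in $\mathrm{Pic}(S)$ (recorded in Table 2, from \cite{DS}): writing $\Gamma=i_SC$ with $C$ Cartier, the $g^1_2$ bundle $\mathcal J$ of Lemma \ref{lem:hyperelliptic curve in K3} satisfies $\mathcal J\cdot C=2/i_S$, which is either non-integral ($i_S=3,4$... i.e.\ \#10) or forces $C$ to carry a $g^1_1$ and hence be rational, contradicting $g(C)$ in Table 2 (\#9, \#11, \#12). No knowledge of the full class group is needed. For \#13 and \#14 the paper abandons this line entirely and proves very ampleness of $K_\Gamma$ directly, using the explicit realization of $\Gamma$ as a complete intersection of two different degrees in the birational model $\bbP'$ of Section 3 (adjunction identifies $K_\Gamma$ with $\mathcal O_{\bbP'}(6)|_\Gamma$, resp.\ $\mathcal O_{\bbP'}(10)|_\Gamma$, and one checks the associated map is an embedding). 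Your cohomological endgame ($h^0(\Gamma,\mathcal O_\bbP(m)|_\Gamma)=1$ via the two restriction sequences) is fine modulo the usual care with reflexive sheaves on the singular $S$, and would indeed give a pleasantly uniform treatment --- but only once the class group computation is actually carried out.
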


Apart from \color{purple}Lemma \ref{lem:hyperelliptic curve in K3}\color{black}, the proof of \color{purple}Theorem \ref{thm:canonical very ample} \color{black}requires information which will be given later. Therefore, let us postpone the proof; it can be found right after \color{purple}\hyperlink{Table 5}{Table 5}\color{black}. We now state the following corollary of \color{purple}Theorem \ref{thm:canonical very ample}\color{black}:

\begin{corollary} \label{cor:clifford index}
In the setting of \color{purple}Theorem \ref{thm:canonical very ample}\color{black}, the Clifford index of the curve $\Gamma$ is strictly larger than $2.$
\end{corollary}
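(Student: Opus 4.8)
The plan is to deduce the bound $\mathrm{Cliff}(\Gamma) > 2$ from the genus of $\Gamma$, the nonhyperellipticity established in \color{purple}Theorem \ref{thm:canonical very ample}\color{black}, and a Clifford-index obstruction coming from the fact that $\Gamma$ moves in a K3 surface. First I would recall the general principle, due to Green–Lazarsfeld, that the Clifford index of a smooth curve $\Gamma$ lying in the smooth locus of a (possibly ADE) K3 surface $S$ is constant in the linear system $|\Gamma|$ and, when it fails to be maximal, is explained by a line bundle on $S$: precisely, if $\mathrm{Cliff}(\Gamma) = c < \lfloor (g-1)/2 \rfloor$ then there is a line bundle $\mathcal{M}$ on $S_{\mathrm{smooth}}$ whose restriction to $\Gamma$ computes the Clifford index. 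This is the same mechanism already invoked for \color{purple}Lemma \ref{lem:hyperelliptic curve in K3}\color{black} (the case $c=0$), applied after passing to a crepant resolution $S' \to S$ as in the conventions; note $\Gamma$, being in the smooth locus, is unaffected.

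Next I would run a case-by-case elimination of $\mathrm{Cliff}(\Gamma) \in \{0,1,2\}$ using \color{purple}\hyperlink{Table 2}{Table 2}\color{black}, where $g = g(\Gamma) \geq 16$ in every case, so the Green–Lazarsfeld constancy applies and $\lfloor (g-1)/2\rfloor \geq 7 > 2$. The value $\mathrm{Cliff}(\Gamma) = 0$ is exactly hyperellipticity, excluded by \color{purple}Theorem \ref{thm:canonical very ample}\color{black}. For $\mathrm{Cliff}(\Gamma) = 1$ (trigonal, or — for $g \neq$ small — plane quintic, but $g(\Gamma)$ is too large for the latter) one gets a line bundle $\mathcal{M}$ on $S_{\mathrm{smooth}}$ restricting to a $g^1_3$ on every member of $|\Gamma|$; similarly $\mathrm{Cliff}(\Gamma) = 2$ forces a line bundle restricting to a $g^1_4$ or $g^2_6$. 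In each of the six cases the plan is to compute the Picard lattice of a general $S$ — it is generated over $\mathbf{Q}$ by the restriction of $\mathcal{O}_\bbP(1)$ together with the exceptional curves of $S' \to S$ resolving the ADE points listed in \color{purple}\hyperlink{Table 2}{Table 2}\color{black} — and to check that no class $\mathcal{M}$ has the intersection numbers ($\mathcal{M}\cdot\Gamma = 3$ or $4$ or $6$, with $\mathcal{M}^2 \geq 0$ and $(\Gamma-\mathcal{M})\cdot$ small) required to produce such a low-degree pencil or net on the general curve section. The hyperelliptic and low-gonality strata in the moduli of polarized K3's are proper closed substacks, and the point is that our $(S,-K_\bbP|_S)$, while nongeneral in $\mathcal{K}_g^{i_S}$, still avoids them because its Picard rank is small (equal to $1 + \#\{\text{exceptional curves}\}$) and the exceptional $(-2)$-curves are too rigid to cut out low pencils on the general $\Gamma$.

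The main obstacle is the last step: ruling out, in each of the six cases, the existence of a line bundle on $S_{\mathrm{smooth}}$ inducing a $g^1_3$, $g^1_4$, or $g^2_6$ on the general curve section. The cleanest route is probably to invoke the classification of Picard lattices of K3's carrying a $g^1_k$ polarization (as in Saint-Donat / Green–Lazarsfeld / Knutsen) and observe that none of our lattices — which I would list explicitly from the singularity data — admit an embedding of the relevant hyperbolic sublattice with the correct pairing against $-K_\bbP|_S$; alternatively, one can argue directly that a $g^1_k$ with $k \leq 4$ on $\Gamma$ would, by Green–Lazarsfeld, propagate to a base-point-free pencil on $S$ of self-intersection $0$, i.e. an elliptic or quasi-elliptic fibration, whose fibre class $F$ satisfies $F \cdot (-K_\bbP|_S) = k$, and then check case by case (using $(-K_\bbP|_S)^2 = 2g-2$ and the list of $(-2)$-classes) that no such $F$ exists — equivalently, that the curve descriptions in \color{purple}Theorem \ref{thm:primitive curves characterization}\color{black} (plane quintic with a total inflection point, hyperelliptic genus $4$, trigonal genus $7$ with a total ramification point, etc.) pin down Clifford indices of the \emph{primitive} curve $C$ that force $\mathrm{Cliff}(\Gamma) \geq 3$ for the $i_S$-multiple $\Gamma$ via the inequality $\mathrm{Cliff}(\Gamma) \geq \mathrm{Cliff}(C)$-type comparisons. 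I expect the bookkeeping to be routine once the Picard lattices are written down, so I would organize the proof as a single lemma on those lattices followed by six short verifications.
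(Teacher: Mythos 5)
Your strategy (Green--Lazarsfeld constancy of the Clifford index in $|\Gamma|$, then elimination of $\mathrm{Cliff}(\Gamma)\in\{0,1,2\}$ by showing the general $S$ carries no line bundle $\mathcal M$ with the required numerics) is legitimate in principle, but as written it has a genuine gap: the entire substance of the argument is deferred. You never actually write down the Picard lattices nor perform the six verifications, and you yourself flag this as ``the main obstacle.'' Worse, the input you would need --- that $\mathrm{Pic}(S')$ of the \emph{general} anticanonical $S$ is generated over $\mathbf Q$ by $-K_\bbP|_S$ and the exceptional $(-2)$-curves of the resolution --- is a Noether--Lefschetz-type statement that is nowhere established in the paper and is not free; without it the elimination cannot even start. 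Two further soft spots: the case $\mathrm{Cliff}(\Gamma)=2$ can be computed by a $g^2_6$, i.e.\ by $\mathcal M$ with $\mathcal M^2=2$ rather than a pencil, so the shortcut via elliptic fibration classes $F$ with $F^2=0$ does not cover all subcases; and the closing appeal to ``$\mathrm{Cliff}(\Gamma)\ge\mathrm{Cliff}(C)$-type comparisons'' between $\Gamma$ and the primitive curve $C$ is not a theorem you can invoke --- no such inequality is stated or proved anywhere, and Theorem \ref{thm:primitive curves characterization} is itself proved much later and cannot be used here without circularity (Corollary \ref{cor:clifford index} feeds into Theorem \ref{thm:canonical curve max extension} and hence into everything downstream).

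For comparison, the paper's proof is two lines and avoids all lattice theory: by \cite[Proposition 6.1]{DS} the anticanonical model $\bbP\subset\bbP^{g+1}$ satisfies property $N_2$, hence so does its linear curve section $\Gamma$, and the appendix of \cite{GL84} (Green--Lazarsfeld) gives the implication that a canonical curve satisfying $N_p$ has $\mathrm{Cliff}>p$. If you want to salvage your route, you would need to (i) prove the Picard-lattice statement for the general $S$ in each of the six families, (ii) treat the $g^2_6$ possibility separately from the pencil cases, and (iii) drop the appeal to the primitive-curve descriptions; at that point the syzygy-theoretic argument is strictly shorter and is the one you should prefer.
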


\begin{proof}
Since the anticanonical model $\bbP\subset \bbP^{g+1}$ satisfies the $N_2$ property, as stated in \cite[Proposition 6.1]{DS}, so does the curve $\Gamma$. It follows by the appendix of \cite{GL84} that $\mathrm{Cliff}(\Gamma)>2$.
\end{proof}

 Subsequently, we are able to prove that the anticanonical divisor $-K_\bbP$ is very ample, for each weighted projective space $\bbP$ listed in Table 2.

\begin{lemma} \label{lem:very ample characterization}
Let $X$ be a projective variety and $D$ an ample Cartier divisor on $X$. Let $A_n = H^0(X,\Oo_X(nD))$ for all $n\in \N$ and $A = \oplus_{n\geq 0} A_n$. If $A$ is generated by $A_1$ as a $\C$-algebra, then $D$ is very ample.
\end{lemma}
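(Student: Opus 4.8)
The plan is to exhibit a very ample line bundle that induces the same embedding as the graded ring $A$, and then conclude. More precisely, since $A$ is generated by $A_1 = H^0(X,\Oo_X(D))$ as a $\C$-algebra, there is a natural surjection of graded $\C$-algebras $\mathrm{Sym}^\bullet(A_1) \twoheadrightarrow A$, which gives a closed embedding $\iota : \mathrm{Proj}(A) \hookrightarrow \bbP(A_1^\vee) = \bbP^N$, where $N = \dim_\C A_1 - 1$. So the first step is to identify $\mathrm{Proj}(A)$ with $X$ itself: because $D$ is an ample Cartier divisor, the canonical map $X \to \mathrm{Proj}\big(\bigoplus_{n\geq 0} H^0(X,\Oo_X(nD))\big)$ is an isomorphism (this is the standard fact that an ample Cartier divisor recovers the variety as the Proj of its section ring; one may invoke e.g.\ the ampleness criterion together with $\Oo_X(nD)$ being globally generated and separating points/tangent vectors for $n\gg 0$, which forces the map to $\mathrm{Proj}$ to be a well-defined isomorphism onto its image). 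Granting this, $\iota$ becomes a closed embedding $X \hookrightarrow \bbP^N$.

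The second step is to check that this embedding is the one given by the complete linear system $|D|$, i.e.\ that $\iota^* \Oo_{\bbP^N}(1) = \Oo_X(D)$ and that $\iota$ is defined by all of $H^0(X,\Oo_X(D))$. This is immediate from the construction: the grading on $A$ places $A_1$ in degree $1$, so the tautological $\Oo(1)$ on $\mathrm{Proj}(A)$ restricts (under the isomorphism with $X$) to $\Oo_X(D)$, and the linear forms on $\bbP^N$ pull back to exactly the sections in $A_1$. A closed embedding into projective space is by definition what it means for the associated line bundle $\Oo_X(D)$ to be very ample, so this finishes the proof.

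I would write this up compactly, something like: let $N+1 = \dim_\C A_1$ and note the surjection $\mathrm{Sym}(A_1)\to A$ of graded rings induces a closed immersion $\mathrm{Proj}(A)\hookrightarrow \bbP^N$; since $D$ is ample and Cartier, the natural morphism $X\to \mathrm{Proj}(A)$ is an isomorphism; composing, $X$ embeds in $\bbP^N$ with $\Oo_{\bbP^N}(1)$ pulling back to $\Oo_X(D)$, hence $D$ is very ample. The only point requiring a word of justification is the isomorphism $X \cong \mathrm{Proj}(A)$; here I would cite the standard reference (e.g.\ Hartshorne's discussion of Proj of a section ring, or EGA II) rather than reprove it, since $X$ is a normal projective variety and $D$ is ample.

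The main obstacle, such as it is, is purely expository: making sure the identification $X \cong \mathrm{Proj}(A)$ is stated at the right level of generality (the paper works with normal, possibly singular, $\Q$-factorial varieties such as weighted projective spaces, so one wants a statement that does not assume smoothness) and that the reader sees why algebra generation in degree $1$ is exactly what upgrades the abstract isomorphism $X\cong\mathrm{Proj}(A)$ into a \emph{linear} embedding by $|D|$. There is no hard computation here — the lemma is essentially a restatement of the definition of $\mathrm{Proj}$ of a finitely generated graded ring together with the fact that an ample Cartier divisor's section ring has $\mathrm{Proj}$ equal to the original variety.
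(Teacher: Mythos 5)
Your proposal is correct and follows essentially the same route as the paper: both identify $X$ with $\mathrm{Proj}(A)$ using the ampleness of $D$ (the paper does this via the Veronese subring $A^{(q)}$ for $q$ with $qD$ very ample), and both observe that generation in degree $1$ realizes this Proj as the image of the map given by $|D|$ — you phrase this via the surjection $\mathrm{Sym}(A_1)\twoheadrightarrow A$, the paper via the subring $L=\oplus L_n$ generated by $A_1$ and the identity $L=A$. The two formulations are interchangeable, and if anything your version makes it slightly more transparent that the resulting map $X\to\bbP(A_1^\vee)$ is a closed immersion rather than merely having image abstractly isomorphic to $X$.
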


\begin{proof}
Let $q$ be a positive integer such that $Z = qD$ is very ample. It induces an embedding of $X$ into a projective space, thus
$$
X \simeq \mathrm{Proj}(A^{(q)}) \simeq \mathrm{Proj}(A),
$$
where $A^{(q)}$ is the graded ring such that $(A^{(q)})_d = A_{dq}$. Let 
$$
\varphi : X \dashrightarrow \bbP(H^0(X,\Oo_X(D)))
$$ 
be the map induced by $D$. If $L_n$ is the image of the multiplication map 
$$
H^0(X,\Oo_X(D))^{\otimes n} \to H^0(X,\Oo_X(nD)),
$$
then it holds that $\varphi(X) = \mathrm{Proj}(L)$ with $L = \oplus_{n\geq 1}L_n$. The condition that $A$ is generated by $A_1$ is equivalent to $A = L$, and therefore it yields $\varphi(X) = \mathrm{Proj}(A) \simeq X$.
\end{proof}

\begin{theorem} \label{thm:anticanonical very ample}
Let $\bbP = \bbP(a_0,a_1,a_2,a_3)$ be one of the Gorenstein weighted projective $3$-spaces listed in \color{purple}\hyperlink{Table 2}{Table 2}\color{black}. Then $-K_\bbP$ is very ample.
\end{theorem}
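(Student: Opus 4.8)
The plan is to apply \color{purple}Lemma~\ref{lem:very ample characterization}\color{black} to $X=\bbP$ with $D=-K_\bbP$, which is ample because $\mathcal O_\bbP(m)$ is ample for every $m>0$. Since $\mathcal O_\bbP(-K_\bbP)=\mathcal O_\bbP(\sigma)$ with $\sigma=a_0+a_1+a_2+a_3$, and since $H^0(\bbP,\mathcal O_\bbP(m))$ is the weighted-degree-$m$ graded piece $R_m$ of $R=\C[x_0,x_1,x_2,x_3]$, the section ring appearing in that lemma is the Veronese subring $\bigoplus_{n\geq 0}R_{n\sigma}$. Hence it suffices to show that this ring is generated as a $\C$-algebra by its degree-one part $R_\sigma$. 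As each $R_{n\sigma}$ has a basis of monomials and monomials are linearly independent, this amounts to the combinatorial assertion that every monomial of weighted degree $n\sigma$ with $n\geq 2$ is a product of $n$ monomials of weighted degree $\sigma$; peeling off one factor at a time, it is enough to prove that every monomial whose weighted degree is a multiple of $\sigma$ and at least $2\sigma$ is divisible by some monomial of weighted degree exactly $\sigma$.

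To prove the latter I would exploit the numerical coincidence, visible in \color{purple}\hyperlink{Table 1}{Table 1}\color{black}, that in each of the six cases $\#9$ to $\#14$ the largest weight satisfies $2a_3=\sigma$ (and, for $\bbP(2,3,10,15)$, also $3a_2=\sigma$). Write a monomial as $\mu=x_0^{e_0}x_1^{e_1}x_2^{e_2}x_3^{e_3}$ of weighted degree $n\sigma$, $n\geq 2$. If $e_3\geq 2$ then $x_3^2$ has weighted degree $\sigma$ and divides $\mu$; for $\#14$, if $e_2\geq 3$ then $x_2^3$ does likewise. One is thus reduced to the case where $x_3$ — and, for $\#14$, also $x_2$ — occurs with small exponent, so that deleting these heavy variables leaves a monomial $\nu$ of weighted degree $\geq\tfrac{3}{2}\sigma$ in the remaining weights, while the congruence $\deg\mu\equiv 0\pmod\sigma$ constrains $\deg\nu$ modulo $\sigma$. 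One then checks, space by space, that these constraints force $\nu$ to contain either a sub-monomial of weighted degree $\sigma$, or a sub-monomial of the weighted degree carried by the deleted heavy variables, which when multiplied back supplies the required divisor of $\mu$. For $\#9$ to $\#13$ this is straightforward because a weight-$1$ coordinate is present, so a greedy partial-sum argument attains the wanted degree, any slack being absorbed by the unit-weight variable.

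The main obstacle is precisely this last bookkeeping, in the regime where the heavy variable $x_3$ — which carries half of the anticanonical degree — is essentially absent from $\mu$: one must show that the remaining weights can always carve out a sub-monomial of the prescribed degree from a monomial of weighted degree $\geq\tfrac{3}{2}\sigma$ meeting the divisibility constraint. This is entirely routine for $\#9$ to $\#13$, but more delicate for $\bbP(2,3,10,15)$, where no weight equals $1$ and, for instance, the monomial $x_2^3$ (which itself has degree $\sigma$) shows that one cannot merely remove a lower power of a single variable; there I would split on the exponents of $x_2$ (weight $10$) and $x_3$ (weight $15$), each at most $2$ after the reductions above, and then solve the small linear Diophantine problems in the weights $2$ and $3$. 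In all six cases this reduces to a finite verification, which completes the proof. (One may also note that $-K_\bbP$ equals the ample generator $\mathcal O_\bbP(l)$ of $\mathrm{Pic}(\bbP)$, except for $\bbP(1,2,3,6)$ where it is its square, so that very ampleness could alternatively be deduced from that of $\mathcal O_\bbP(l)$.)
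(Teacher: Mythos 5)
Your proposal is correct in its reduction and takes a genuinely different route from the paper. The paper also goes through Lemma \ref{lem:very ample characterization}, but it establishes the surjectivity of the multiplication maps $H^0(\Oo_\bbP(-K_\bbP))^{\otimes n} \to H^0(\Oo_\bbP(-nK_\bbP))$ cohomologically, in two stages: first on the curve section $\Gamma$ (using Shokurov's projective normality of canonical curves, which requires the nonhyperellipticity of $\Gamma$ from Theorem \ref{thm:canonical very ample}), then lifting to $S$ and to $\bbP$ via restriction exact sequences and Saint-Donat's results on K3 surfaces. Your argument instead observes that $H^0(\bbP,\Oo_\bbP(n\sigma)) = R_{n\sigma}$ has a monomial basis, so surjectivity of the multiplication maps is \emph{equivalent} to the purely combinatorial statement that every monomial of weighted degree $n\sigma$, $n\geq 2$, admits a divisor of weighted degree exactly $\sigma$. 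This buys a self-contained, elementary proof that is independent of Theorem \ref{thm:canonical very ample} and of the birational models of Section 3 (whereas the paper's proof of very ampleness logically depends on material established only after Table 5); the paper's route, on the other hand, requires no case-by-case arithmetic and yields the projective normality statements as natural byproducts of standard theory. Your identification of the relevant numerical coincidences ($2a_3=\sigma$ in all six cases, and additionally $3a_2=\sigma$ for $\bbP(2,3,10,15)$) is accurate, and your parenthetical remark that $-K_\bbP = \Oo_\bbP(l)$ except in case $\#11$ is consistent with Table 1.

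The one thing separating your text from a proof is that the decisive step — the case-by-case Diophantine verification that, after the heavy variables are capped, the residual monomial always contains a sub-monomial of the required degree — is asserted rather than carried out, and in this approach that verification \emph{is} the entire mathematical content. It is not literally a finite check (the exponents of the light variables are unbounded), so each case needs a short argument of the kind you gesture at: e.g.\ for $\bbP(1,6,14,21)$ with $e_3=1$ one uses that $21$ is odd while $6$ and $14$ are even to force $e_0\geq 1$ and then splits on $e_1,e_2$; for $\bbP(2,3,10,15)$ with $e_2\leq 2$, $e_3\leq 1$ one solves small congruences in the weights $2,3$. I have spot-checked several of these and they do close up, so the approach succeeds; but you should write out the six verifications (or at least the two delicate ones, $\#13$ and $\#14$) before calling the proof complete. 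A minor caveat on your final remark: deducing the theorem from very ampleness of $\Oo_\bbP(l)$ is not a shortcut, since very ampleness of $\Oo_\bbP(l)$ on a weighted projective space is itself a statement of exactly the same nature and is not automatic.
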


\begin{proof}
Let $S$ be a general member of $|-K_\bbP|$ and $\Gamma$ a general member of $|-K_\bbP|_S|$. We first prove that $-K_\bbP|_S$ is very ample. Thanks to \color{purple}Lemma \ref{lem:very ample characterization}\color{black}, it is enough to prove that $H^0(S,\Oo_S(-K_\bbP|_S))^{\otimes n} \to H^0(S,\Oo_S(-nK_\bbP|_S))$ is onto for all $n\geq 1$. It is naturally the case for $n=1$. Assume now that $n>1$ and that it holds up to rank $n-1$. Consider the following commutative diagram

\begin{center}
\begin{tikzcd}
H^0(\Gamma,\Oo_\Gamma(K_\Gamma))^{\otimes n} \arrow[r,twoheadrightarrow] & H^0(\Gamma,\Oo_\Gamma(nK_\Gamma)) \\
H^0(S,\Oo_S(-K_\bbP|_S))^{\otimes n} \arrow[r] \arrow[u,twoheadrightarrow] & H^0(S,\Oo_S(-nK_\bbP|_S)) \arrow[u,twoheadrightarrow] \\
H^0(S,\Oo_S(-K_\bbP|_S))^{\otimes n-1} \arrow[r,twoheadrightarrow] \arrow[u,hookrightarrow,"\otimes f"] & H^0(S,\Oo_S(-(n-1)K_\bbP|_S)) \arrow[u,hookrightarrow,"\cdot f"]
\end{tikzcd}
\end{center}

\noindent
Here, $f$ is a global section of $\Oo_S(-K_\bbP|_S)$ such that $\Gamma = \left\{ f=0\right\}$ on $S$. The bottom arrow is onto by the induction hypothesis.

The right column is the restriction exact sequence. The curve $\Gamma$ lies on the surface $S$, which is itself embedded in $\bbP^g$. By \cite[Lemma 2.1, Proposition 2.1]{Sho}, we have $h^1(\bbP^g,\mathcal I_{\Gamma|\bbP^{g}}(n)) = 0$ for $n\geq 1$. Besides, by $\cite[6.6]{SD}$, we have $h^1(\bbP^g,\mathcal I_{S|\bbP^g}(n)) = 0$ for $n\geq 1$. It follows that $h^1(S,\mathcal I_{\Gamma|S}(n)) = 0$ for $n\geq 1$, hence the top vertical arrows are surjective. This argument works since $g(\Gamma)\geq 3$ and $\Gamma$ is nonhyperelliptic, ensuring that $\Gamma$ is projectively normal. The surjectivity of the top horizontal arrow follows from the surjectivity of $H^0(\bbP^g,\mathcal O_{\bbP^g}(n)) \to H^0(\Gamma,\mathcal O_\Gamma(nK_\Gamma))$, by the vanishing $h^1(\bbP^g,\mathcal I_{\Gamma|\bbP^g}(n)) = 0$, and the identification
$$
H^0(\Gamma,\mathcal O_\Gamma(K_\Gamma))^{\otimes n} \simeq H^0(\bbP^g,\mathcal O_{\bbP^g}(1))^{\otimes n} = H^0(\bbP^g,\mathcal O_{\bbP^g}(n)).
$$
We refer to \color{purple}Theorem \ref{thm:canonical very ample} \color{black}for a case-by-case proof that $\Gamma$ is nonhyperelliptic and $K_\Gamma$ is very ample.

As the vertical sequence on the right is exact, the surjectivity of the middle arrow follows from the surjectivity of the bottom and the top arrows by diagram chasing. Hence the induction holds and $H^0(S,\Oo_S(-K_\bbP|_S))^{\otimes n} \to H^n(S,\Oo_S(-nK_\bbP|_S))$ is onto for all $n\geq 1$.

Moreover, we know that K3 surfaces with ADE singularities are projectively normal. Indeed, if $S$ is a singular K3 surface in $\bbP^g$, we may consider a resolution $S' \to S \subset \bbP^g$ with $S'$ a smooth K3 and apply \cite[Theorem 6.1.$(ii)$]{SD}. It follows that the restriction map $H^0(\bbP,\Oo_\bbP(-nK_\bbP)) \to H^0(S,\Oo_S(-nK_\bbP|_S))$ is onto for all $n\geq 1$. Consider the following commutative diagram with $n>1$, whose right column is exact

\begin{center}
\begin{tikzcd}
H^0(S,\Oo_S(-K_\bbP|_S))^{\otimes n} \arrow[r,twoheadrightarrow] & H^0(S,\Oo_S(-nK_\bbP|_S)) \\
H^0(\bbP,\Oo_\bbP(-K_\bbP))^{\otimes n} \arrow[r] \arrow[u,twoheadrightarrow] & H^0(\bbP,\Oo_\bbP(-nK_\bbP)) \arrow[u,twoheadrightarrow] \\
H^0(\bbP,\Oo_\bbP(-K_\bbP))^{\otimes n-1} \arrow[r,twoheadrightarrow] \arrow[u,hookrightarrow] & H^0(\bbP,\Oo_\bbP(-(n-1)K_\bbP)) \arrow[u,hookrightarrow]
\end{tikzcd}
\end{center}

\noindent
We may assume that the bottom arrow is surjective by an induction hypothesis, as it is the case for $n = 1$. By an analogous argument of diagram chasing as above, the middle arrow $H^0(\bbP,\Oo_\bbP(-K_\bbP))^{\otimes n} \to H^n(\bbP,\Oo_\bbP(-nK_\bbP))$ is onto. This is true for all $n$ and thus the conclusion follows from \color{purple}Lemma \ref{lem:very ample characterization}\color{black}.
\end{proof}

\section{Birational models}\hypertarget{section 3}{}

Before constructing the maximal extensions, we study for each $\bbP$ in \color{purple}\hyperlink{Table 2}{Table 2} \color{black}except $\bbP(1,2,3,6)$ a birational model $\varphi : \bbP \dashrightarrow \bbP'$ such that $\varphi$ restricts to an isomorphism on the general $S\in |-K_\bbP|$. This will allow us to express the general $\Gamma\in |-K_\bbP|_S|$ as a complete intersection of two equations of different degrees in $\bbP'$ and to construct extensions of $\Gamma$ in \hyperlink{section 4}{Section 4}.

The first step consists in the introduction of a suitable Veronese map on each $\bbP$, which is an embedding $v_n : \bbP \hookrightarrow X$, where $X$ is a weighted projective space of dimension $4$, so that the anticanonical model $\bbP \to \bbP^{g+1}$ factors as a composition
\begin{center}
\begin{tikzcd}
\bbP \arrow[rr,hookrightarrow,"v_n"] & & X \arrow[rr,dashrightarrow] & & \bbP^{g+1}.
\end{tikzcd}
\end{center}
Each time, $X\dashrightarrow \bbP^{g+1}$ is a rational map that we will specify.

\subsection{The Veronese maps}

Let $\bbP = \bbP(a_0,a_1,a_2,a_3)$ be a weighted projective space. Denote $R = \C[x,y,z,w]$ with the suitable grading such that $\bbP = \mathrm{Proj}(R)$. Then the following isomorphism holds for all $n\in \N$,
$$
\bbP \simeq \mathrm{Proj}(R^{(n)}),
$$
where $R^{(n)}$ is the graded ring whose degree $d$ part is $(R^{(n)})_d = R_{nd}$. This gives rise to an embedding $v_n$ which we refer to as the $n$-Veronese map: given a fixed $n$, $R^{(n)}$ is isomorphic to a quotient of the form $\sfrac{\C[y_0,...,y_m]}{I}$ where $\C[y_0,...,y_m]$ has a given grading $\deg y_i = d_i$ and $I$ is a homogeneous ideal. This makes $\bbP$ a subscheme of a larger weighted projective space, as $\bbP \simeq V(I) = \left\{ \mathrm y = [y_0: \cdots : y_m] \: | \: f(\mathrm y) = 0 \text{ for all } f\in I\right\}$ in $\bbP(d_0,...,d_m)$.

\begin{exemple} \label{ex:5-Veronese}
The $5$-Veronese embedding of $\bbP = \bbP(1,4,5,10)$. Taking $n=5$ yields the isomorphism
$$
\bbP \simeq \mathrm{Proj}(\C[x^5,xy,z,w,y^5]) = \mathrm{Proj}
\left( \sfrac{\C[u_0,u_1,u_2,v,s]}{(u_0s-u_1^5)}\right).
$$
The grading on the right is the following: the $u_i$'s have weight $1$, while $v$ has weight $2$ and $s$ has weight $4$. This realizes $\bbP$ as the degree $5$ hypersurface given by the equation $u_0s= u_1^5$ in $\bbP(1,1,1,2,4)$ through the following embedding.
$$
v_5 : [x:y:z:w] \in \bbP \mapsto [u_0:u_1:u_2:v:s]=[x^5:xy:z:w:y^5].
$$
\end{exemple}

The choice $n=5$ is motivated by the fact that it is a divisor of $\sigma = 20$, with $-K_\bbP = \Oo_\bbP(\sigma)$. We can recover $-K_\bbP$ from $v_5$ using the equality $-K_\bbP = v_5^* \Oo_{\bbP(1,1,1,2,4)}(4)$.

In general, given $\bbP$ any Gorenstein space of the list, we can choose $n$ a divisor of $\sigma$ and embed $\bbP$ by $v_n$ in a larger weighted projective space $X$. This yields $-K_\bbP = v_n^*\Oo_X(\frac{\sigma}{n})$.

For the items on our list from \#9 to \#14 a suitable choice of $n$ is given in \color{purple}\hyperlink{Table 3}{Table 3} \color{black}below, realizing each time $\bbP$ as a hypersurface in a weighted projective space $X$ of dimension $4$. Similarly as in \color{purple}\hyperlink{Table 1}{Table 1}\color{black}, $\sigma$ refers to the degree of $-K_\bbP$ with regard to the grading of $\bbP$.

$$
\hypertarget{Table 3}{}
\begin{array}{|l|l|l|l|l|l|l|}
\hline
\# & \bbP & \sigma & n & X & v_n(\bbP) \subset X & -K_\bbP \\
\hline & & & & & & \\[-10pt]
9 & \bbP(1,4,5,10) & 20 & 5 & \bbP(1,1,1,2,4)_{[u_0:u_1:u_2:v:s]} & \text{quintic } (u_0s=u_1^5) & v_5^*\Oo_X(4) \\
10 & \bbP(1,2,6,9) & 18 & 2 & \bbP(1,1,3,5,9)_{[u_0:u_1:v:s:t]} & 10\text{-ic } (u_0t = s^2) & v_2^*\Oo_X(9) \\
11 & \bbP(1,2,3,6) & 12 & 2 & \bbP(1,1,2,3,3)_{[u_0:u_1:v:s_0:s_1]} & \text{quartic } (u_0s_0=v^2) & v_2^*\Oo_X(6) \\
12 & \bbP(1,3,8,12) & 24 & 3 & \bbP(1,1,3,4,8)_{[u_0:u_1:v:s:t]} & 9\text{-ic } (u_0t=v^3) & v_3^*\Oo_X(8) \\
13 & \bbP(1,6,14,21) & 42 & 7 & \bbP(1,1,2,3,6)_{[u_0:u_1:v:s:t]} & \text{heptic } (u_0t=u_1^7) & v_7^*\Oo_X(6) \\
14 & \bbP(2,3,10,15) & 30 & 3 & \bbP(1,2,4,5,10)_{[u:v:s:t:r]} & 12\text{-ic } (vr=s^3) & v_3^*\Oo_X(10) \\
\hline
\end{array}
$$

\begin{center}
\footnotesize{Table 3}
\end{center}

In each case, the anticanonical embedding $\bbP \hookrightarrow \bbP^{g+1}$ factors as the composite map

\begin{center}
\begin{tikzcd}
\bbP \arrow[r,hookrightarrow,"v_n"] & X \arrow[rr,dashrightarrow,"|\Oo_X(\frac{\sigma}{n})|"] & & \bbP^{g+1}.
\end{tikzcd}
\end{center}

By a divisibility criterion, we can check fairly easily that $|\Oo_X(\frac{\sigma}{n})|$ is not always basepoint-free. This criterion is purely numerical: $|\Oo_X(\frac{\sigma}{n})|$ is basepoint-free if and only if $\frac{\sigma}{n}$ is divisible by all the weights of $X$ (see this \color{purple}\hyperlink{Remark}{Remark}\color{black}, which is based on the same argument). Namely, it is not the case for \#10, \#12 and \#14, for which the induced map $X\dashrightarrow \bbP^{g+1}$ is nonregular.

\subsection{The birational models}

The goal now is to exhibit a birational map from $\bbP$ to another $3$-dimensional weighted projective space $\bbP'$ which realizes the general anticanonical divisor $S$ of $\bbP$ (in other words, a general hyperplane section of the anticanonical model $\bbP \subset \bbP^{g+1}$) as a nongeneral anticanonical divisor of $\bbP'$.

In all cases but \#11, the image of $X$ in $\bbP^{g+1}$ is a cone with vertex a point which belongs to the image of $\bbP \to \bbP^{g+1}$. This follows from the fact that $\frac{\sigma}{n}$ equals the largest weight of $X$; say, $X = \bbP(d_0,d_1,d_2,d_3,d_4)$ with $\frac{\sigma}{n} = d_4$. Then the map given by the linear system $|\Oo_X(\frac{\sigma}{n})|$ is the following:
$$
[x_0:x_1:x_2:x_3:x_4]\in X \mapsto [\mathtt f_0: \mathtt f_1 : \cdots : \mathtt f_g : x_4],
$$
where the $\mathtt f_i$'s form a basis of the degree $\frac{\sigma}{n}$ homogeneous polynomials in the variables $x_0,x_1,x_2$ and $x_3$. Hence the equations for the image of $X$ encode the algebraic relations between the $\mathtt f_i$'s and do not involve $x_4$; the vertex of the cone $X \subset \bbP^{g+1}$ is thus the image of the point $p_{x_4} = [0:0:0:0:1]\in X$, and this point belongs to the image of $\bbP$ by \color{purple}\hyperlink{Table 3}{Table 3}\color{black}, as claimed.

This suggests projecting from the vertex point of this cone to $\bbP^g$. This induces the identity on $S$, provided that $S\subset \bbP^g$ is a hyperplane section of $\bbP$ not passing through the vertex. 

This yields a dominant, generically finite rational map $\varphi : \bbP \dashrightarrow \bbP'$, where $\bbP'$ is the weighted projective $3$-space whose weights are those of $X$ but the last one. In other words, if $X = \bbP(d_0,d_1,d_2,d_3,d_4)$ with $d_4 = \frac{\sigma}{n}$, then $\bbP' = \bbP(d_0,d_1,d_2,d_3)$. This map is in fact birational in each case of \color{purple}\hyperlink{Table 3}{Table 3} \color{black}but item \#11, and we provide a complete proof of its birationality for the particular case \#9 in \color{purple}Lemma \ref{lem:example is birational} \color{black}below. The proof in each other case is similar as the one we will provide for \#9.

The restriction of this map to $S$ is an isomorphism; the image of $S$ is K3 so it is a (nongeneral) anticanonical divisor of $\bbP'$. We denote by $\mathcal L$ the (noncomplete) linear system whose members are the anticanonical divisors of $\bbP'$ which are the direct images of all $D\in |-K_\bbP|$,
$$
\mathcal L = \varphi(|-K_\bbP|) \subset |-K_{\bbP'}|,
$$
so that the surface $\varphi(S)$ is a general member of $\mathcal L$. 

Since $S\simeq \varphi(S)$ we will drop the notation $\varphi(S)$ for the sake of brevity and use $S$ instead. Likewise, we will refer to $\varphi(\Gamma)$ as $\Gamma$. As our computations will show, the restriction of $\mathcal L$ to $S$ has $|\Oo_{\bbP'}(\frac{\sigma}{n})|_S|$ as its mobile part. That way, $\Gamma$ is cut out on $\bbP'$ by two equations of different degrees.

\begin{exemple} \label{ex:birational model}
We have seen in \color{purple}Example \ref{ex:5-Veronese} \color{black}that the $5$-Veronese map on $\bbP=\bbP(1,4,5,10)$ is an embedding $\bbP \hookrightarrow X$ with $X = \bbP(1,1,1,2,4)$. The anticanonical model of $\bbP$ factors via

\begin{center}
\begin{tikzcd}
X \arrow[rr,hookrightarrow,"|\Oo_X(4)|"] & & \bbP^{22}.
\end{tikzcd}
\end{center}

Let $\bbP'=\bbP(1,1,1,2)$. It is embedded in $\bbP^{21}$ by $|\Oo_{\bbP'}(4)|$, making $X$ a cone over $\bbP'$. As $\bbP$ passes through the vertex of $X$, projecting from the vertex point onto $\bbP'$ induces a rational map $\varphi$ from $\bbP$ to $\bbP'$. In homogeneous coordinates, we can express $\varphi$ as
$$
\varphi : [x:y:z:w] \in \bbP \mapsto [u_0:u_1:u_2:v]=[x^5:xy:z:w]\in \bbP'.
$$
This is the $5$-Veronese map without its last component $y^5$.
\end{exemple}

\begin{lemma} \label{lem:example is birational}
The map $\varphi$ given in \color{purple}Example \ref{ex:birational model} \color{black}is birational.
\end{lemma}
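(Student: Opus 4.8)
The plan is to exhibit an explicit rational inverse to $\varphi$. Recall that $\varphi$ sends $[x:y:z:w]$ to $[u_0:u_1:u_2:v] = [x^5:xy:z:w]$, and that these coordinates on $\bbP' = \bbP(1,1,1,2)$ satisfy no relation, while on $\bbP = \bbP(1,4,5,10)$ the products $x^5, xy, z, w$ generate the degree-$5$ part of $\C[x,y,z,w]$. First I would work on the open locus $U' \subset \bbP'$ where $u_0 \neq 0$ and exhibit a morphism $U' \to \bbP$ by assigning
$$
\psi : [u_0:u_1:u_2:v] \longmapsto [u_0^{?} : u_1 u_0^{?} : u_2 u_0^{?} : v\, u_0^{?}],
$$
where the exponents are chosen so that the assignment is well-defined under the $\mathbf G_m$-action with weights $(1,4,5,10)$: the point is that $x = u_0^{1/5}$ formally, so $x:y = x^5 : xy \cdot x^4 / x^5 = \dots$; concretely one sets $x \sim u_0$, then $y = (xy)/x \sim u_1/u_0$ must be rescaled to integer powers, giving $[x:y:z:w] = [u_0^{?}: \dots]$. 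Carrying out the bookkeeping (this is the only real computation, and it is routine once the weights are written down) produces an explicit $\psi : \bbP' \dashrightarrow \bbP$, and then one checks $\varphi \circ \psi = \mathrm{id}$ and $\psi \circ \varphi = \mathrm{id}$ on the dense opens where everything is defined, e.g. on $\{x \neq 0\} \subset \bbP$ and its image.

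Alternatively, and perhaps more transparently, I would argue via function fields: $\varphi$ is dominant and generically finite (as already noted in the text preceding the lemma), so it suffices to show $\varphi$ has degree $1$, i.e. that $\C(\bbP) = \C(\bbP')$ via $\varphi^*$. Now $\C(\bbP)$ is the degree-$0$ part of $\C[x,y,z,w]$ localized, which is generated by the ratios of monomials of equal weight; a convenient set of generators is $y/x^4$, $z/x^5$, $w/x^{10}$. On the other side $\C(\bbP') = \C(u_1/u_0, u_2/u_0, v/u_0^2)$. Under $\varphi^*$ we get $u_1/u_0 = (xy)/x^5 = y/x^4$, $u_2/u_0 = z/x^5$, and $v/u_0^2 = w/x^{10}$. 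Hence $\varphi^*$ is surjective on function fields, so $\deg \varphi = 1$ and $\varphi$ is birational.

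The main obstacle is essentially cosmetic rather than mathematical: making sure the inverse map $\psi$ is written with the correct integer exponents so that it genuinely descends to the weighted projective quotient (weighted projective spaces are unforgiving about fractional powers), and identifying the precise dense open subsets on which $\varphi$ and $\psi$ are mutually inverse morphisms. I expect the function-field argument to be the cleanest to present, since it sidesteps the exponent bookkeeping entirely and uses only the already-established fact that $\varphi$ is dominant and generically finite; the explicit $\psi$ can then be recorded afterwards if one wants it for later use (e.g. to describe the inverse of the isomorphism $S \xrightarrow{\sim} \varphi(S)$).
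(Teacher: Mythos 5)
Your proposal is correct, but it takes a genuinely different route from the paper. The paper proves birationality by resolving $\varphi$ toricly: it constructs a common weighted blow-up $\widehat{\bbP}$ with two birational morphisms $\varepsilon_1 : \widehat{\bbP}\to\bbP$ and $\varepsilon_2:\widehat{\bbP}\to\bbP'$ (obtained by subdividing the fans of $\bbP$ and $\bbP'$ at the ray $\mathtt e_\zeta$) and checks $\varphi\circ\varepsilon_1=\varepsilon_2$ in homogeneous coordinates. Your function-field argument is shorter and entirely elementary: since $x$ has weight $1$, the chart $\{x\neq 0\}$ is an honest $\A^3$ with coordinate ring $\C[y/x^4,z/x^5,w/x^{10}]$, and likewise $\C(\bbP')=\C(u_1/u_0,u_2/u_0,v/u_0^2)$; the pullbacks $\varphi^*(u_1/u_0)=y/x^4$, $\varphi^*(u_2/u_0)=z/x^5$, $\varphi^*(v/u_0^2)=w/x^{10}$ generate $\C(\bbP)$, so $\varphi^*$ is an isomorphism of function fields and $\deg\varphi=1$. (Dominance, which you need for $\varphi^*$ to be defined, is immediate: any point with $u_0\neq 0$ is hit by taking $x$ a fifth root of $u_0$ and $y=u_1/x$.) Your explicit-inverse variant also goes through — the bookkeeping gives $\psi([u_0:u_1:u_2:v])=[u_0:u_1u_0^3:u_2u_0^4:vu_0^8]$, which is well defined for the weights $(1,4,5,10)$ and satisfies $\varphi\circ\psi=\mathrm{id}$ — though as written you have deferred that computation. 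What the paper's heavier toric proof buys, and what your argument does not provide, is the explicit resolution $\widehat{\bbP}$ with its exceptional loci: the identification of the indeterminacy point $p_y$, of the contracted divisor $\{x=0\}$, and of the exceptional divisor of $\varepsilon_2$ is used immediately afterwards to compute the equation of $\varphi(S)$ in $\bbP'$ and the fixed part of $\mathcal L|_S$. So your proof establishes the lemma as stated, but if it were to replace the paper's, some of that auxiliary information would have to be re-derived (e.g.\ from your explicit $\psi$).
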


\begin{proof}
The map $\varphi$ is toric, i.e., equivariant under the torus actions on $\bbP$ and $\bbP'$. It is encoded by a transformation of the polytopes of $\bbP$ and $\bbP'$.

We prove that $\varphi$ is birational by exhibiting a commutative diagram
\begin{center}
\begin{tikzcd}
 & & \widehat{\bbP} \arrow[dll,"\varepsilon_1",swap] \arrow[drr,"\varepsilon_2"] & & \\
\bbP \arrow[rrrr,dashrightarrow,"\varphi",swap] & & & & \bbP'
\end{tikzcd}
\end{center}
in which $\varepsilon_1$ and $\varepsilon_2$ are weighted blow-ups, and $\widehat{P}$ is a projective toric variety encoded by a fan which involves a subdivision of the fans of $\bbP$ and $\bbP'$.

Let us first construct $\varepsilon_1$. The map $\varphi$ is regular outside the point $p_y = \left\{ x = z = w = 0\right\}$. Since this indeterminacy point is a toric point, i.e., it is fixed by action of the torus on $\bbP$, we may resolve the indeterminacy of $\varphi$ via a toric birational modification given by a weighted blow-up of $\bbP$ at $p_y$. This corresponds to a subdivision of the cone of the affine chart $\left\{ y\neq 0 \right\}$.

We introduce the following elements of the lattice $\Z^3$ in $\R^3$:
$$
\mathtt e_x = \left[ \begin{array}{c}
-4 \\ -5 \\ -10
\end{array} \right], \,
\mathtt e_y = \left[ \begin{array}{c}
1 \\ 0 \\ 0
\end{array} \right], \, 
\mathtt e_z = \left[ \begin{array}{c}
0 \\ 1 \\ 0
\end{array} \right], \,
\mathtt e_w = \left[ \begin{array}{c}
0 \\ 0 \\ 1
\end{array} \right].
$$
The algorithm given in \cite[Proposition 2.8]{RT} gives rise to the following fan in $\Z^3$ for the toric variety $\bbP$.
$$
\Sigma_{\bbP} = \mathrm{Fan}(\mathtt e_x, \mathtt e_y, \mathtt e_z, \mathtt e_w) = \mathrm{Fan}\left( \left[ \begin{array}{c}
-4 \\ -5 \\ -10
\end{array} \right], \left[ \begin{array}{c}
1 \\ 0 \\ 0
\end{array} \right], \left[ \begin{array}{c}
0 \\ 1 \\ 0
\end{array} \right], \left[ \begin{array}{c}
0 \\ 0 \\ 1
\end{array} \right] \right).
$$
This notation means that the cones of the fan $\Sigma_\bbP$ are the ones generated by all possible proper subsets of the family $\left\{ \mathtt e_x,\mathtt e_y, \mathtt e_z, \mathtt e_w \right\}$.

There is a one-to-one correspondence between the cones of $\Sigma_\bbP$ and the toric subsets of $\bbP$ (in other words, the subsets which are stable under the action of the torus). For instance, the toric hypersurface $\left\{ x = 0 \right\}$ is encoded by the cone $\R_+ \mathtt e_x$, and likewise, the toric point $p_x$ is encoded by the cone $\R_+ \mathtt e_y + \R_+ \mathtt e_z + \R_+ \mathtt e_w$.

In particular, the indeterminacy point $p_y$ of $\varphi$ is the origin of the affine chart $\left\{ y \neq 0 \right\}$ and a weighted blow-up of $\bbP$ at this point results in a subdivision of the cone $\R_+ \mathtt e_x + \R_+ \mathtt e_z + \R_+ \mathtt e_w$, in other words, the addition of a new cone of dimension $1$ which is generated by a linear combination over $\N$ of $\mathtt e_x, \mathtt e_z$ and $\mathtt e_w$.

We introduce a new element of the lattice:
$$
\mathtt e_\zeta = \left[ \begin{array}{c}
-1 \\ -1 \\ -2
\end{array} \right].
$$
This element can be obtained as a $\Z$-linear combination of $\mathtt e_x, \mathtt e_z$ and $\mathtt e_w$ as follows:
$$
4\left[ \begin{array}{c}
-1 \\ -1 \\ -2
\end{array} \right] = \left[ \begin{array}{c}
-4 \\ -5 \\ -10
\end{array} \right] + \left[ \begin{array}{c}
0 \\ 1 \\ 0
\end{array} \right] + 2\left[ \begin{array}{c}
0 \\ 0 \\ 1
\end{array} \right].
$$
This gives rise to a variety $\widehat{\bbP}$ which is the weighted blow-up of $\bbP$ at $p_y$ with weights $1,1$ and $2$ whose exceptional divisor is isomorphic to $\bbP(1,1,2)$. This yields a fan $\Sigma_{\widehat{\bbP}}$ for $\widehat{\bbP}$ which contains a subdivision of the cone $\R_+ \mathtt e_x + \R_+ \mathtt e_z + \R_+ \mathtt e_w$ as above. The other cones of maximal dimension in the fan $\Sigma_\bbP$ are left unchanged and are also cones of the fan $\Sigma_{\widehat{\bbP}}$.

We now refer to the construction of homogeneous coordinates on toric varieties which is explained in \cite[\S5.1]{CLS}. This allows us to introduce five toric coordinates on $\widehat{\bbP}$ which we denote by $(\mathtt x, \zeta, \mathtt y, \mathtt z, \mathtt w)$ with the following grading in $\Z^2$, so that $\widehat{\bbP} = \mathrm{Proj}(\C[\mathtt x,\zeta,\mathtt y,\mathtt z,\mathtt w])$.
$$
\begin{array}{c|ccccc}
 & \mathtt x & \zeta & \mathtt y & \mathtt z & \mathtt w \\
\hline
\text{degree} & 1 & 0 & 4 & 5 & 10 \\
\text{in } \Z^2: & 0 & 1 & 1 & 1 & 2
\end{array}
$$
The blow-up map $\varepsilon_1$ from $\widehat{\bbP}$ to $\bbP$ in homogeneous coordinates is
$$
\varepsilon_1 : [\mathtt x:\zeta:\mathtt y:\mathtt z:\mathtt w] \in \widehat \bbP \mapsto [\mathtt x\zeta:\mathtt y\zeta^3:\mathtt z\zeta^4:\mathtt w\zeta^8] \in \bbP.
$$
It is well defined everywhere and contracts the exceptional divisor $\left\{ \zeta = 0\right\}$ to the point $p_y$. Indeed, if we fix a point with representative $(\mathtt x, \zeta, \mathtt y, \mathtt z, \mathtt w)$, $\zeta^{\sfrac{1}{4}}$ a fourth root of $\zeta$ and $\mu = (\zeta^{\sfrac{1}{4}})^{-3}$, then the image of this point in $\bbP$ is 
$$
[\mu \mathtt x \zeta : \mu^4 \mathtt y\zeta^3:\mu^5 \mathtt z \zeta^4:\mu^{10} \mathtt w \zeta^8] =
[\mathtt x\zeta^{\sfrac{1}{4}}:\mathtt y:\mathtt z\zeta^{\sfrac{1}{4}}:\mathtt w\zeta^{\sfrac{1}{2}}].
$$
Let us now construct the weighted blow-up $\varepsilon_2$ and check that $\varphi \circ \varepsilon_1 = \varepsilon_2$. The following is a fan for the weighted projective space $\bbP' = \bbP(1,1,1,2)$.
$$
\Sigma_{\bbP'} = \mathrm{Fan}(\mathtt e_\zeta, \mathtt e_y, \mathtt e_z, \mathtt e_w) = \mathrm{Fan}\left( \left[ \begin{array}{c}
-1 \\ -1 \\ -2
\end{array} \right], \left[ \begin{array}{c}
1 \\ 0 \\ 0
\end{array} \right], \left[ \begin{array}{c}
0 \\ 1 \\ 0
\end{array} \right], \left[ \begin{array}{c}
0 \\ 0 \\ 1
\end{array} \right] \right).
$$
Besides, $\widehat{\bbP}$ is also the weighted blow-up of $\bbP'$ along a toric curve. Indeed, we have
$$
\left[ \begin{array}{c}
-4 \\ -5 \\ -10
\end{array} \right] = 5\left[ \begin{array}{c}
-1 \\ -1 \\ -2
\end{array} \right] + \left[ \begin{array}{c}
1 \\ 0 \\ 0
\end{array} \right],
$$
in other words, $\mathtt e_x = 5\mathtt e_\zeta + \mathtt e_y$, where $\mathtt e_\zeta$ and $\mathtt e_y$ are rays of the fan $\Sigma_{\bbP'}$. This subdivision of the cone $\R_+\mathtt e_\zeta + \R_+\mathtt e_y$ yields the same fan as $\Sigma_{\widehat{\bbP}}$.

The blow-up map $\varepsilon_2$ from $\widehat{\bbP}$ to $\bbP'$ is the following:
$$
\varepsilon_2 : [\mathtt x:\zeta:\mathtt y:\mathtt z:\mathtt w] \in \widehat \bbP \mapsto [\mathtt x^5\zeta:\mathtt x\mathtt y:\mathtt z: \mathtt w]\in \bbP',
$$
contracting the exceptional locus $\left\{ \mathtt x=0\right\}$ to a curve.

Moreover, given any point $[\mathtt x:\zeta:\mathtt y:\mathtt z:\mathtt w] \in \widehat \bbP$, we have
$$
\varphi \circ \varepsilon_1 ([\mathtt x:\zeta:\mathtt y:\mathtt z:\mathtt w]) = [\mathtt x^5\zeta^5:\mathtt x \mathtt y \zeta^4:z\zeta^4:\mathtt w \zeta^8] = \varepsilon_2([\mathtt x:\zeta:\mathtt y:\mathtt z:\mathtt w]).
$$
So $\varphi \circ \varepsilon_1 = \varepsilon_2$, as required.
\end{proof}

As a consequence, this map restricts to an isomorphism on the general hyperplane section $S$ of $\bbP \subset \bbP^{g+1}$, since $\varphi$ is by construction the restriction to $\bbP$ of the projection from the vertex of the cone $X$.

Continuing on \color{purple}Example \ref{ex:birational model}\color{black}, if $S$ is general in the basepoint-free linear system $|-K_\bbP|$, then it avoids the indeterminacy point of $\varphi$. Its image being a K3 surface, it is an anticanonical divisor of $\bbP'$, i.e., a quintic surface in $\bbP(1,1,1,2)$. Using the description of $\varphi$ in homogeneous coordinates, we see that $S$ in $\bbP(1,1,1,2)$ has equation
\begin{equation} \label{eq:equation S}
u_0f_4(\mathbf u,v) + u_1^5 = 0
\end{equation}
with $f_4$ a general homogeneous polynomial of degree $4$ in the variable $\mathbf u = (u_0,u_1,u_2)$ and $v$. Indeed, such an equation pulls back to an equation on $\bbP$ of the form $x^5f_{20}(x,y,z,w) = 0$, where $f_{20}$ is a general $20$-ic on $\bbP$. In other words, the pullback to $\bbP$ of $S\subset \bbP'$ is $S+(x^5)$ where the locus $x = 0$ is contracted by $\varphi$.

Therefore, $\mathcal L = \varphi(|-K_\bbP|) \subset |-K_{\bbP'}|$ consists of those quintic surfaces of the form 
$$
u_0f_4(\mathbf u,v) + \lambda u_1^5 = 0,
$$
with $\deg(f_4) = 4$ and $\lambda \in \C$. The surface $S$ being general in $\mathcal L$, $\lambda$ is non zero and up to scaling, we may assume $\lambda = 1$ as in \color{purple}(\ref{eq:equation S})\color{black}. The base locus of $\mathcal L$ is the curve $\Delta := \left\{ u_0 = u_1 = 0\right\}$. 

\begin{lemma}
In case $\#9$, given a general $S \in |-K_\bbP|$, the general $\Gamma \in |-K_\bbP|_S|$ is cut out on $S$ in $\bbP'$ by a general quartic.
\end{lemma}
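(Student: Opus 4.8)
The plan is to reduce the claim to a dimension count on $\bbP' = \bbP(1,1,1,2)$. By \color{purple}Example \ref{ex:birational model}\color{black}, the map $\varphi$ restricts to an isomorphism $S \xrightarrow{\sim} \varphi(S)$, where $\varphi(S)$ is the quintic surface $\{u_0 f_4(\mathbf u,v) + u_1^5 = 0\}$ of \color{purple}(\ref{eq:equation S})\color{black}. First I would identify the polarization $-K_\bbP|_S$ under this isomorphism. Since $\varphi$ is the restriction to $\bbP$ of the linear projection of $X = \bbP(1,1,1,2,4)$ from the vertex of the cone $X \subset \bbP^{22}$, and since $\bbP'$ is embedded in $\bbP^{21}$ by $|\Oo_{\bbP'}(4)|$ so that $X$ is the cone over it, this projection identifies $\Oo_{\bbP^{22}}(1)|_S = -K_\bbP|_S$ with $\Oo_{\bbP^{21}}(1)|_{\varphi(S)} = \Oo_{\bbP'}(4)|_{\varphi(S)}$; equivalently $\varphi^*\Oo_{\bbP'}(4) = v_5^*\Oo_X(4) = -K_\bbP$ on the open locus of definition of $\varphi$, which contains the general $S$. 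Hence, under $S \simeq \varphi(S)$, the complete linear system $|-K_\bbP|_S|$ coincides with $|\Oo_{\bbP'}(4)|_S|$, and it suffices to prove that the restriction map $\rho \colon H^0(\bbP', \Oo_{\bbP'}(4)) \to H^0(S, \Oo_{\bbP'}(4)|_S)$ is surjective.

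Next I would compute both dimensions. For the target: $\Oo_{\bbP'}(4)|_S = -K_\bbP|_S$ is the hyperplane bundle $\Oo_S(1)$ of the anticanonical model, and $S$ is a general hyperplane section of $\bbP \subset \bbP^{22}$; from the restriction sequence $0 \to \Oo_\bbP \to \Oo_\bbP(-K_\bbP) \to \Oo_S(-K_\bbP|_S) \to 0$, the vanishing $h^1(\bbP,\Oo_\bbP) = 0$, and $h^0(\bbP,-K_\bbP) = g+2 = 23$, one gets $h^0(S, \Oo_{\bbP'}(4)|_S) = 22$ (equivalently, Riemann--Roch on a crepant resolution of $S$, using $(-K_\bbP|_S)^2 = (-K_\bbP)^3 = 2g-2 = 40$). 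For the source: $\Oo_{\bbP'}(4)$ is a genuine line bundle on $\bbP(1,1,1,2)$, because $\mathrm{Pic}(\bbP')$ is generated by $\Oo_{\bbP'}(2)$ and $4$ is even; so $H^0(\bbP',\Oo_{\bbP'}(4))$ is the degree-$4$ graded piece of $\C[u_0,u_1,u_2,v]$, of dimension $15+6+1 = 22$ (monomials of type $u^4$, $u^2 v$ and $v^2$).

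To finish, I would observe that $\rho$ is injective: a quartic form vanishing on the irreducible quintic surface $\varphi(S)$ must be divisible by its defining equation $u_0 f_4 + u_1^5$, and there is no nonzero quartic multiple of a quintic, so $\ker\rho = 0$. An injective linear map between two vector spaces of dimension $22$ is an isomorphism; thus $\rho$ is surjective, every member of $|-K_\bbP|_S|$ is of the form $S \cap \{q = 0\}$ for a quartic $q$ unique up to scalar, and a general $\Gamma$ corresponds under $\rho$ to a general quartic.

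The only point requiring genuine care is the identification $-K_\bbP|_S \simeq \Oo_{\bbP'}(4)|_S$ together with the value $h^0(S,-K_\bbP|_S) = 22$; once these are in place the lemma follows by the dimension count above. The analogous assertions for the remaining cases \#10, \#12, \#13 and \#14 are obtained in the same way, with the quintic of \color{purple}(\ref{eq:equation S})\color{black} replaced by the relevant hypersurface in the corresponding $\bbP'$ and the quartic replaced by $\Oo_{\bbP'}(\tfrac{\sigma}{n})$.
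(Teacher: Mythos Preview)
Your argument is correct and takes a genuinely different route from the paper's. The paper intersects two general members $S,S'\in\mathcal L$ in $\bbP'$, observes that
\[
S\cap S' = S\cap\{u_0(f_4-f'_4)=0\} = \Delta + (\text{general quartic section}),
\]
and concludes that $\mathcal L|_S$ has fixed part $\Delta=\{u_0=u_1=0\}$ and mobile part $|\Oo_{\bbP'}(4)|_S|$; hence the map $S\to\bbP^{21}$ induced by $\mathcal L$ coincides with the one induced by quartics, and $\Gamma$ is a general quartic section. You instead first identify the line bundle $-K_\bbP|_S\simeq \Oo_{\bbP'}(4)|_{\varphi(S)}$ via the linear projection from the vertex of the cone $X$, and then show by a clean dimension count ($22=22$) that the restriction $H^0(\bbP',\Oo_{\bbP'}(4))\to H^0(S,\Oo_{\bbP'}(4)|_S)$ is an isomorphism. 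Both approaches are valid: the paper's computation makes the base curve $\Delta$ of $\mathcal L$ visible, which is referenced again later, while your dimension count is shorter, bypasses the decomposition $\Delta+\Gamma$ entirely, and ports uniformly to the other cases as you note. The injectivity step (no nonzero quartic vanishes on an irreducible quintic hypersurface) is the exact counterpart of the paper's observation that $f_4-f'_4$ runs over all quartics as $f'_4$ does.
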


\begin{proof}
Let  $S'$ be another general member of $\mathcal L$, i.e., the image under $\varphi$ of a general anticanonical divisor of $\bbP$, which is the zero locus of $u_0f'_4(\mathbf u,v) + u_1^5$ with $f'_4$ a homogeneous quartics, then
$$
S\cap S' = \left\{ u_0f_4 + u_1^5 = u_0f'_4 + u_1^5 = 0 \right\} = S\cap \left\{ u_0(f_4-f'_4) = 0 \right\},
$$
and $f_4-f'_4$ is a general quartic of $\bbP'$. This shows that the restriction $\mathcal L|_S$ has $S\cap \left\{ u_0 = 0 \right\} = \Delta$ as its fixed part, and its mobile part is $|\Oo_{\bbP'}(4)|_S|$. Thus the map from $S$ given by the restriction of $\mathcal L$ is the same map as the one induced by the quartics of $\bbP'$,
$$
S \xrightarrow{|\Oo_{\bbP'}(4)|_S|} \bbP^{21}
$$
so that the curve $\Gamma$ is the pullback to $S$ of a hyperplane of $\bbP^{21}$. Hence, $\Gamma$ is cut out on $S$ by a general quartic of $\bbP'$.
\end{proof}

In conclusion, the curve $\Gamma$ is a complete intersection of degrees $5$ and $4$ given by the two equations
$$
u_0f_4(\mathbf u,v) + u_1^5 = g_4(\mathbf u,v) = 0,
$$
with $g_4$ a general quartic on $\bbP'$. This is summed up in the following commutative diagram
\begin{center}
\begin{tikzcd}
& & \bbP \arrow[rr,dashrightarrow,"\varphi"] \arrow[dll,hookrightarrow,"v_5",swap] \arrow[d,hookrightarrow,"|-K_\bbP|"] & & \bbP' \arrow[d,hookrightarrow,"|\Oo_{\bbP'}(4)|"] \\
X \arrow[rr,hookrightarrow,"|\Oo_X(4)|",swap]& & \bbP^{22} \arrow[rr,dashrightarrow,"\mathrm{pr}"] & & \bbP^{21}
\end{tikzcd}
\end{center}
Here, $\mathrm{pr}$ is the projection map from the vertex point of the cone $X$ onto $\bbP'$.

\vspace{.2cm}
In all cases from \#9 to \#14 but \#11, we can apply similar arguments leading to a description of $\Gamma$ as a complete intersection of two different degrees. All the needed pieces of information are listed in \color{purple}\hyperlink{Table 4}{Table 4} \color{black}and \color{purple}\hyperlink{Table 5}{Table 5} \color{black}below. As in the example above, $\Delta$ refers to the base locus of $\mathcal L$; this is also the fixed part of $\mathcal L|_S$. An expression of $\varphi$ in the coordinates $[x:y:z:w]$ of $\bbP$ is also provided. It makes it possible to check the given equation for $S$ in $\bbP'$. It also makes visible the indeterminacy point of $\varphi$, which we denote by $p$. The proof that $\varphi$ is birational in each case is along the same lines as the proof of \color{purple}Lemma \ref{lem:example is birational}\color{black}.

In the case of $\bbP(1,4,5,10)$, the expression of $\varphi$ is $[x^5:xy:z:w]$. The indeterminacy point $p$ is the point for which $x = z = w = 0$, commonly denoted by $p_y$. In all cases, the indeterminacy point is such a coordinate point, as displayed in \color{purple}\hyperlink{Table 4}{Table 4} \color{black}below.

$$
\hypertarget{Table 4}{}
\begin{array}{|l|l|l|l|l|}
\hline
\# & \bbP & \bbP' & \text{expression of } \varphi & p\\
\hline & & & & \\[-10pt]
9 & \bbP(1,4,5,10) & \bbP(1,1,1,2)_{[u_0:u_1:u_2:v]} & [x^5:xy:z:w] & p_y \\
10 & \bbP(1,2,6,9) & \bbP(1,1,3,5)_{[u_0:u_1:v:s]} & [x^2:y:z:xw] & p_w \\
12 & \bbP(1,3,8,12) & \bbP(1,1,3,4)_{[u_0:u_1:v:s]} & [x^3:y:xz:w] & p_z \\
13 & \bbP(1,6,14,21) & \bbP(1,1,2,3)_{[u_0:u_1:v:s]} & [x^7:xy:z:w] & p_y \\
14 & \bbP(2,3,10,15) & \bbP(1,2,4,5)_{[u:v:s:t]} & [y:x^3:xz:w] & p_z \\
\hline
\end{array}
$$
\begin{center}
\footnotesize{Table 4}
\end{center}

The anticanonical model $\bbP \subset \bbP^{g+1}$ is a hypersurface of the cone $X$ whose vertex point is $p$. The projection map from the point $p$ to $\bbP'$ restrict to an isomorphism on the general $S$ such that $p\notin S$, since $S$ is cut out by a hyperplane.

$$
\hypertarget{Table 5}{}
\begin{array}{|l|l|l|l|}
\hline
\# & \text{equation for } S \text{ in } \bbP' & \Delta & \Gamma \\
\hline & & & \\[-10pt]
9 & u_0f_4(\mathbf u,v) + u_1^5 = 0 & u_0 = u_1 = 0 & S\cap \mathrm{quartic} \\
10 & u_0f_9(\mathbf u,v,s) + s^2 = 0 & u_0 = s = 0 & S\cap 9\mathrm{-ic} \\
12 & u_0f_8(\mathbf u,v,s) + v^3 = 0 & u_0 = v = 0 & S\cap 8\mathrm{-ic} \\
13 & u_0f_6(\mathbf u,v,s) + u_1^7 = 0 & u_0 = u_1 = 0 & S\cap \mathrm{sextic} \\
14 & vf_{10}(u,v,s,t) + s^3 = 0 & v = s = 0 & S\cap 10\mathrm{-ic} \\
\hline
\end{array}
$$
\begin{center}
\footnotesize{Table 5}
\end{center}

We always denote by $f_d$ a general degree $d$ homogeneous polynomial in accordance with the grading of $\bbP'$.

$$
\star \star
$$

The information displayed in Table 5 makes it possible to provide a proof for \color{purple}Theorem \ref{thm:canonical very ample}\color{black}.

\begin{proof}[of Theorem 2.5]
Being a general anticanonical divisor of $\bbP$, $S$ has isolated singularities. By Bertini's Theorem, the general $\Gamma\in |-K_\bbP|_S|$ is smooth.

In case $\#10$, the index of $-K_\bbP|_S$ in $\mathrm{Pic}(S)$ is equal to $3$. Hence $\Gamma = 3C$ where $C$ is a Cartier divisor on $S$. By \color{purple}Lemma \ref{lem:hyperelliptic curve in K3}\color{black}, if $\Gamma$ is hyperelliptic, there exists a line bundle $\mathcal J$ on $S_\mathrm{smooth}$ such that $\mathcal J|_\Gamma$ is a $g_2^1$. Hence $\mathcal J|_C$ has degree $\frac{2}{3}$, which is not possible.

In cases $\#9,\#11$ and $\#12$, the index is equal to $2$, hence $\Gamma = 2C$ for some primitive Cartier divisor $C$ on $S$. Once again, if $\Gamma$ is hyperelliptic, by \color{purple}Lemma \ref{lem:hyperelliptic curve in K3} \color{black}there exists a line bundle $\mathcal J$ on $S_\mathrm{smooth}$ such that $\mathcal J|_\Gamma$ is a $g_2^1$. Therefore, $\mathcal J|_C$ is a $g_1^1$, i.e., $C$ is a rational curve. But we know from the values of $g(C)$ listed in \color{purple}\hyperlink{Table 2}{Table 2} \color{black}that it is not the case.

Lastly, in cases $\#13$ and $\#14$, the index is equal to $1$, so $\Gamma$ is primitive in $\mathrm{Pic}(S)$. We need to use the information which is given in \color{purple}\hyperlink{Table 5}{Table 5} \color{black}and yields:
\begin{enumerate}
\item[$\mathtt I$.] for $\#13$, $\Gamma = -7K_\Sigma$ where $\Sigma$ is a general sextic in $\bbP' = \bbP(1,1,2,3)$. It holds by the adjunction formula that $K_\Gamma = -6K_\Sigma|_\Gamma = \Oo_{\bbP'}(6)|_\Gamma$ which is very ample: indeed, the morphism induced by the linear system $|\mathcal O_{\bbP'}(6)|$ coincides with the $6$-Veronese, which is an embedding $\bbP' \hookrightarrow \bbP^{23}$.
\item[$\mathtt{II}$.] for $\#14$, $\Gamma = -6K_\Sigma$ where $\Sigma$ is a general $10$-ic in $\bbP' = \bbP(1,2,4,5)$. By the adjunction formula, $K_\Gamma = -5K_\Sigma|_\Gamma = \Oo_{\bbP'}(10)|_\Gamma$. But $\Gamma$ does not meet the base point of $\Oo_{\bbP'}(10)$ by the generality assumption. The morphism induced by the linear system $|\mathcal O_{\bbP'}(10)|$ factors as the composition
\begin{center}
\begin{tikzcd}
\bbP(1,2,4,5) \arrow[r,"v_{10}"] & \bbP(1^{17},2) \arrow[rrr,dashrightarrow,"|\mathcal O_{\bbP(1^{17},2)}(1)|"] &&& \bbP^{16},
\end{tikzcd}
\end{center}
where, in a choice of coordinates $[u:v:s:t]$ on $\bbP'$, the $10$-Veronese $v_{10}$ is written
$$
|u:v:s:t] \mapsto [\mathtt f_1 : \cdots : \mathtt f_{17} : s^5] \in \bbP(1^{17},2),
$$
with $\mathtt f_1,...,\mathtt f_{17}$ a basis of the vector space of $10$-ics in the variables $(u,v,s,t)$. The right arrow $\bbP(1^{17},2)\dashrightarrow \bbP^{16}$ is regular outside the unique base point of $\mathcal O_{\bbP'}(10)$, which does not belong to $\Gamma$. By \color{purple}\hyperlink{Table 5}{Table 5} \color{black}above, the curve $\Gamma$ is cut out in $\bbP'$ by a general $10$-ic equation, which we may choose to be $\mathtt f_{17} = 0$ without loss of generality, and the $12$-ic $s^3 = -vf_{10}(u,v,s,t)$ where $\deg f_{10} = 10$. Hence the $10$-Veronese map restricted to $\Gamma$ (which is an isomorphism onto its image) yields 
$$
[u:v:s:t] \in \Gamma \mapsto [\mathtt f_1 : \cdots : \mathtt f_{16} : -s^2vf_{10}(u,v,s,t)] \in \bbP(1^{16},2).
$$
In the above, the $\mathtt f_i$'s with $i = 1,...,16$ form a basis of $H^0(\bbP',\mathcal O_{\bbP'}(10)|_\Gamma)$ and the restriction of the two $10$-ics $s^2v, f_{10}(u,v,s,t)$ to $\Gamma$ can be expressed as a linear combination thereof. In other words, up to the equations which define $\Gamma$, we have:
$$
s^2 v = \sum_{i=1}^{16} \lambda_i \mathtt f_i =: \mathtt l(\lambda_i,\mathtt f_i), \, f_{10}(u,v,s,t) = \sum_{i=1}^{16} \mu_i \mathtt f_i =: \mathtt l(\mu_i,\mathtt f_i).
$$
It follows that the morphism induced by the canonical linear system $|K_\Gamma|$ of $\Gamma$,
$$
[u:v:s:t] \in \Gamma \mapsto [\mathtt f_1 : \cdots : \mathtt f_{16}] \in \bbP^{15},
$$
is an isomorphism onto its image, via the following inverse map
$$
[\mathtt f_1 : \cdots : \mathtt f_{16}] \mapsto [\mathtt f_1 : \cdots : \mathtt f_{16}:\mathtt l(\lambda_i,\mathtt f_i) \mathtt l(\mu_i,\mathtt f_i)]\in \Gamma.
$$
\end{enumerate}
In both cases, $K_\Gamma$ is very ample and so $\Gamma$ is nonhyperelliptic.
\end{proof}

\section{Extensions of $\bbP$}\hypertarget{section 4}{}

Recall from \color{purple}Theorem \ref{thm:canonical curve max extension} \color{black}and the values of $\alpha(\Gamma,K_\Gamma)$ given in \color{purple}\hyperlink{Table 2}{Table 2} \color{black}that $\bbP(1,2,3,6)$ admits no extension; therefore, we focus here on items \#9 to \#14 but \#11 and use the description given for $\Gamma$ as a complete intersection of two different degrees in $\bbP'$ to construct an extension of $\bbP$. In all the following cases except \#14, we manage to construct a maximal extension $Y$ as a hypersurface in a weighted projective space of dimension $2+\alpha(\Gamma,K_\Gamma)$. The last case $\#14$, which is $\bbP = \bbP(2,3,10,15)$, will require additional work.

As a consequence of \color{purple}Theorem \ref{thm:canonical curve max extension}\color{black}, we are assured that if $Y$ contains all the surface extensions of $\Gamma$, then it is the universal extension of $\Gamma$. In cases \#9, \#10 and \#12, we provide a proof that the extension $Y$ of $\bbP$ is the universal extension of its general linear curve section $\Gamma$. This requires the following lemma:

\begin{lemma}
\label{lem:universalcharacterization}
Let $\Gamma \subset \bbP^{g-1}$ be a smooth canonical curve of genus $g\geq 11$ such that $\mathrm{Cliff}(\Gamma)\geq 3$. Let $\alpha = \mathrm{cork}(\Phi_{K_\Gamma})$ and $Y\subset \bbP^{g-1+\alpha}$ be a maximal extension of $\Gamma$ with $\dim Y \geq 2$. Then $Y$ is the universal extension of $\Gamma$ if and only if it contains no cone over $\Gamma$ as a linear surface section.
\end{lemma}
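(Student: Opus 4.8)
The plan is to compare the given maximal extension $Y$ with the universal extension $Y_{\mathrm{univ}}\subset\bbP^{g-1+\alpha}$ furnished by \color{purple}Theorem \ref{thm:canonical curve max extension}\color{black}; this $Y_{\mathrm{univ}}$ exists because $g\geq 11$ and $\mathrm{Cliff}(\Gamma)\geq 3$, and it is unique up to projective equivalence by the construction of \cite[\S5]{CDS}. Both $Y$ and $Y_{\mathrm{univ}}$ are irreducible, reduced and non-degenerate of dimension $1+\alpha$, and since $\Gamma$ is a canonical curve it is linearly normal, so $\langle\Gamma\rangle\cong\bbP^{g-1}$ is the unique codimension-$\alpha$ linear subspace with $\Gamma=Y\cap\langle\Gamma\rangle$, and likewise $\Gamma=Y_{\mathrm{univ}}\cap\langle\Gamma\rangle$. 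The $g$-planes $\Lambda$ with $\langle\Gamma\rangle\subset\Lambda\subset\bbP^{g-1+\alpha}$ form a $\bbP^{\alpha-1}$. First I would observe that two distinct such $g$-planes meet exactly along $\langle\Gamma\rangle$, whereas a surface extension of $\Gamma$ is non-degenerate in its ambient $\bbP^{g}$; hence the assignment $\Lambda\mapsto Y\cap\Lambda$ is injective on the locus where it yields a surface extension. Together with the definition of the universal extension, this reduces the statement to the equivalence ``$Y$ is the universal extension of $\Gamma$'' $\iff$ ``every surface extension of $\Gamma$ is of the form $Y\cap\Lambda$ for some $g$-plane $\Lambda\supseteq\langle\Gamma\rangle$'', and the same for $Y_{\mathrm{univ}}$.

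For the implication ($\Rightarrow$): if $Y$ is universal then $Y\cong Y_{\mathrm{univ}}$, so it suffices to show $Y_{\mathrm{univ}}$ has no cone over $\Gamma$ among its $g$-plane sections. Here I would invoke the deformation-theoretic content underlying the construction of $Y_{\mathrm{univ}}$ in \cite[\S5]{CDS}: the $g$-planes through $\langle\Gamma\rangle$ parametrize the first-order (ribbon) extensions of $\Gamma$, i.e.\ the points of $\bbP(\coker\Phi_{K_\Gamma})\cong\bbP^{\alpha-1}$, whereas the cone over $\Gamma$ corresponds to the class $0\in\coker\Phi_{K_\Gamma}$, which is not a point of that projectivization. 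Hence no $g$-plane section of $Y_{\mathrm{univ}}$ is a cone over $\Gamma$, and since projective equivalence preserves the property of admitting a cone section, neither does $Y$.

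For the implication ($\Leftarrow$): assume $Y$ contains no cone over $\Gamma$ as a linear surface section. Since $Y$ is irreducible, non-degenerate and of dimension $\geq 2$, Bertini's theorem applied to the linear system of $g$-planes through $\langle\Gamma\rangle$ (whose base locus is exactly $\langle\Gamma\rangle$) shows that $Y\cap\Lambda$ is an irreducible, reduced, non-degenerate surface containing $\Gamma$ for $\Lambda$ general in $\bbP^{\alpha-1}$; by hypothesis it is not a cone, so it is a genuine surface extension of $\Gamma$. Thus $\Lambda\mapsto Y\cap\Lambda$ is a generically injective rational map from $\bbP^{\alpha-1}$ to the parameter variety $\mathcal M$ of surface extensions of $\Gamma$. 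By the case ($\Rightarrow$) applied to $Y_{\mathrm{univ}}$, the analogous map for $Y_{\mathrm{univ}}$ is a birational parametrization of $\mathcal M$ by $\bbP^{\alpha-1}$, so composing yields a birational self-map of $\bbP^{\alpha-1}$ matching the $g$-planes relevant to $Y$ with those relevant to $Y_{\mathrm{univ}}$. I would then upgrade this to a projective linear automorphism $\Psi$ of $\bbP^{g-1+\alpha}$ fixing $\langle\Gamma\rangle$ with $\Psi(Y\cap\Lambda)=Y_{\mathrm{univ}}\cap\Psi(\Lambda)$ for general $\Lambda$; since these surface sections sweep out $Y$ and $Y_{\mathrm{univ}}$ respectively, it follows that $\Psi(Y)=Y_{\mathrm{univ}}$, so $Y$ is the universal extension.

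The hard part is the last step of ($\Leftarrow$): promoting the generic correspondence between the surface sections of $Y$ and of $Y_{\mathrm{univ}}$ to an actual projective equivalence $Y\cong Y_{\mathrm{univ}}$, equivalently checking that \emph{every} surface extension of $\Gamma$, and not merely a dense family, arises as a section of $Y$. This is precisely where the explicit construction of the universal extension in \cite[\S5]{CDS} — which reconstructs $Y_{\mathrm{univ}}$ canonically from the pair $(\Gamma,K_\Gamma)$ — must be used: under the no-cone hypothesis one verifies that $Y$ satisfies the defining property of that reconstruction, so that the birational self-map of $\bbP^{\alpha-1}$ above is defined away from codimension $\geq 2$ and injective, hence an isomorphism. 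The surjectivity (properness) aspect there is the genuinely delicate point.
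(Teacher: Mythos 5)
Your setup (the $\bbP^{\alpha-1}$ of $g$-planes through $\langle\Gamma\rangle$, the identification of surface extensions with points of $\bbP(\coker\Phi_{K_\Gamma})$, and the observation that the cone corresponds to the zero class) matches the paper's, and your ($\Rightarrow$) direction is essentially fine. But the ($\Leftarrow$) direction has a genuine gap, which you yourself flag: you only produce a \emph{generically} defined, generically injective rational map $\bbP^{\alpha-1}\dashrightarrow\mathcal M$ via Bertini, and then you cannot promote the resulting birational self-map of $\bbP^{\alpha-1}$ to the statement that \emph{every} surface extension of $\Gamma$ occurs as a section of $Y$. Your closing appeal to ``the explicit construction in \cite[\S 5]{CDS}'' is a placeholder, not an argument, and the detour through a projective linear automorphism $\Psi$ carrying $Y$ to $Y_{\mathrm{univ}}$ is both unproved and unnecessary.

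The missing ingredient is that the assignment $\Lambda\mapsto Y\cap\Lambda$ is not merely some rational map between two copies of $\bbP^{\alpha-1}$: it is induced by a \emph{linear} map $\C^\alpha\to\C^\alpha$ (this is \cite[6.6]{CD'}, which the paper cites at exactly this point). Two things then close the argument immediately. First, the no-cone hypothesis makes the map everywhere defined --- for \emph{each} $\Lambda\supset\langle\Gamma\rangle$ the section $Y\cap\Lambda$ is a surface extension of $\Gamma$ which, not being a cone, determines a nonzero class in $\coker\Phi_{K_\Gamma}$; there is no need for Bertini and no restriction to general $\Lambda$. Second, the underlying linear map has trivial kernel (a kernel vector would correspond to a cone section), and a linear endomorphism of $\C^\alpha$ with trivial kernel is an isomorphism; hence every surface extension of $\Gamma$ is a linear section of $Y$, and \color{purple}Theorem \ref{thm:canonical curve max extension}\color{black} then gives that $Y$ is the universal extension. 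Without the linearity input your ``surjectivity/properness'' step does not go through as written, so the proof is incomplete at its decisive point.
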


\begin{proof}
Let us denote by $\mathcal H$ the family of $g$-planes $\Lambda \subset \bbP^{g-1+\alpha}$ such that $\Gamma \subset \Lambda$, and $\mathcal S = \bbP(\mathrm{coker}(\Phi_{K_\Gamma}))$ the space which parametrizes the surface extensions of $\Gamma$. Then we have $\mathcal H \simeq \mathcal S \simeq \bbP^{\alpha - 1}$, and the map $\mathcal H \to \mathcal S$ which maps $\Lambda$ to $Y \cap \Lambda$ is well-defined if and only if no linear surface section of $Y$ is a cone over $\Gamma$. In that case, it lifts to a linear map $\C^\alpha \to \C^\alpha$ which has trivial kernel, i.e., it is an isomorphism (see \cite[6.6]{CD'}).

It follows that $Y$ contains all the surface extensions of $\Gamma$ as unique linear sections, if and only if it contains no cone over $\Gamma$ as linear surface section. In this situation, by \color{purple}Theorem \ref{thm:canonical curve max extension}\color{black}, it is the universal extension of $\Gamma$.
\end{proof}

\subsection{$\bbP = \bbP(1,4,5,10)$}

According to \color{purple}\hyperlink{Table 4}{Table 4} \color{black}and \color{purple}\hyperlink{Table 5}{Table 5}\color{black}, the curve $\Gamma$ is cut out on $\bbP' = \bbP(1^3,2)_{[\mathbf u:v]}$ by the equations
$$
u_0f_4(\mathbf u,v) + u_1^5 = g_4(\mathbf u,v) = 0
$$
where $f_4$ and $g_4$ are general homogeneous quartic polynomials. Consider then the equation
$$
u_0s_0 + u_1s_1 + u_2s_2 = u_1^5
$$
where $s_0,s_1$ and $s_2$ are coordinates of weight $4$. This defines a quintic hypersurface $Y$ in $\mathbf{X} = \bbP(1^3,2,4^3)_{[\mathbf u:v:\mathbf s]}$.

\begin{lemma} \label{lem:1 4 5 10 max extension}
The variety $Y$ has a model in $\bbP^{24}$ which is a maximal extension of $\bbP$, i.e., it has dimension $5 = 1+\alpha(\Gamma,K_\Gamma)$ according to \color{purple}\hyperlink{Table 2}{Table 2}\color{black}, contains $\bbP$ as a linear section, and is not a cone.

Moreover, the extension $Y$ contains no cone over $\Gamma$ as a surface linear section.
\end{lemma}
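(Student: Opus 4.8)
The plan is to pin down the $\bbP^{24}$-model of $Y$, exhibit $\bbP$ as a codimension-$2$ linear section of it, and then extract both cone statements from the fact that $\mathbf X=\bbP(1^3,2,4^3)$ is, in this embedding, a cone. First, the model: the morphism $\mathbf X\to\bbP^{24}$ defined by $|\Oo_{\mathbf X}(4)|$ (there are $25$ monomials of degree $4$ in $\mathbf u,v,\mathbf s$) is finite onto its image since $\Oo_{\mathbf X}(4)$ is ample; a direct check shows $\C[\mathbf u,v,\mathbf s]^{(4)}$ is generated in degree $1$, so $\mathbf X$ is embedded as a nondegenerate $6$-fold and $Y$ as a nondegenerate $5$-fold of degree $\Oo_{\mathbf X}(4)^5\cdot\Oo_{\mathbf X}(5)=40=2g-2$. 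By the factorisation $\bbP\hookrightarrow X\dashrightarrow\bbP^{g+1}$ of \hyperlink{section 3}{Section 3}, $\Oo_{\mathbf X}(4)|_Y$ restricts to $-K_\bbP$ on $\bbP=v_5(\bbP)$, hence to $-K_\bbP|_S$ on $S$ and to $K_\Gamma$ on $\Gamma$. Cutting $Y$ by the hyperplanes $\{s_1=0\}$ and $\{s_2=0\}$ of $\bbP^{24}$ leaves the quintic $\{u_0s_0=u_1^5\}$ of $\bbP(1,1,1,2,4)=X$, i.e. $v_5(\bbP)$ in its anticanonical embedding; since $\dim Y=5$, this realises $\bbP$ as a codimension-$2$ linear section of $Y$.

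The structural fact underlying the rest is that $\mathbf X$ is the cone over $\bbP'=\bbP(1^3,2)$ (in its $\Oo_{\bbP'}(4)$-embedding in $\bbP^{21}$) with vertex the plane $\Pi=\langle p_{s_0},p_{s_1},p_{s_2}\rangle$: the linear projection $\rho$ from $\Pi$ writes $\mathbf X$ as the union of the $3$-planes $\bbP^3_\xi=\langle\Pi,P_\xi\rangle$, $\xi\in\bbP'$, $P_\xi$ the image of $\xi$. Substituting into $u_0s_0+u_1s_1+u_2s_2=u_1^5$ shows $Y\cap\bbP^3_\xi$ is, for general $\xi$, a $2$-plane $\bbP^2_\xi$, so $\rho$ exhibits $Y$ as swept out by the planes $\bbP^2_\xi$. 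The second ingredient is that $\bbP'$ contains no line, since $\Oo_{\bbP'}(4)=2\Oo_{\bbP'}(2)$ with $\Oo_{\bbP'}(2)$ generating $\operatorname{Pic}\bbP'$, so every curve of $\bbP'$ has $\Oo_{\bbP'}(4)$-degree $\geq 2$. Now suppose $Y$ is a cone with vertex a linear space $V$. No point of $\Pi$ is a vertex of $Y$ (it is not on $\bbP^2_\xi$ for general $\xi$), so $V\cap\Pi=\emptyset$ and $\rho|_V$ is an isomorphism onto a linear subspace of $\bbP'$; as $\bbP'$ has no line, $V=\{q\}$ is a point with $q\notin\Pi$. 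Writing $Y=\operatorname{cone}(q,Y')$ with $Y'=Y\cap H$ general, each line $\overline{qp}$ ($p\in Y'$) lies on $Y$, so $\rho(\overline{qp})$ is a point of $\bbP'$ and $\overline{qp}\subset\bbP^3_{\rho(q)}$, the unique fibre through $q$; hence $Y'\subset\bbP^3_{\rho(q)}$ and $Y=\operatorname{cone}(q,Y')\subset\bbP^3_{\rho(q)}$, absurd since $\dim Y=5$. Together with $\dim Y=1+\alpha(\Gamma,K_\Gamma)$ and \color{purple}Theorem \ref{thm:canonical curve max extension}\color{black} applied to $\Gamma$ (smooth canonical, $g=21\geq 11$, $\operatorname{Cliff}(\Gamma)>2$ by \color{purple}Corollary \ref{cor:clifford index}\color{black}), and since any extension of $\bbP$ is an extension of $\Gamma$, this shows $Y$ is a maximal extension of $\bbP$.

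For the last assertion, suppose a linear surface section $Y\cap\Lambda$ ($\Lambda\cong\bbP^{21}$) is a cone over $\Gamma$ with vertex a point $q$. Every ruling line $\ell$ lies on $Y$, so $\rho(\ell)$ is a point of $\bbP'$ and $\ell$ sits in a fibre $\bbP^3_{\rho(\ell)}$ through $q$. If $q\notin\Pi$, all ruling lines lie in the single fibre $\bbP^3_{\rho(q)}$, so $Y\cap\Lambda\subset Y\cap\bbP^3_{\rho(q)}$, which is a $2$-plane for $\rho(q)$ off the one special point of $\bbP'$ and a $3$-plane for it — in either case far too small to be a cone over a genus-$21$ curve, a contradiction. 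The remaining case $q\in\Pi$ is the main obstacle: here $q$ lies on every fibre, the ruling lines may run over a genuine curve $B=\rho(\{\text{ruling lines}\})\subset\bbP'$, and one must show that forcing $\dim(\bbP^3_\xi\cap\Lambda)\geq 1$ for every $\xi\in B$ over-determines $\Lambda$. Writing $\Lambda=\{L_1=L_2=L_3=0\}$ and restricting the $L_j$ to $\bbP^3_\xi$, the rank-$\leq 1$ requirement on the matrix $[\,L_j(P_\xi)\mid L_j(p_{s_i})\,]$ first forces the constant block $(L_j(p_{s_i}))$ to have rank $\leq 1$: if it vanishes then $\Pi\subset\Lambda$, whence $Y\cap\Lambda\supset\Pi$, incompatible with irreducibility of the cone over $\Gamma$; in the rank-$1$ subcase the columns $(L_j(P_\xi))_j$ must be proportional to a fixed vector along $B$, forcing the degree-$4$ forms $L_j|_{\bbP'}$ into a pencil and, after unwinding, $Y\cap\Lambda$ into a linear space or a single fibre — again impossible. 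Finally, knowing $Y$ has no cone over $\Gamma$ as a linear surface section, \color{purple}Lemma \ref{lem:universalcharacterization}\color{black} identifies the $\bbP^{24}$-model of $Y$ with the universal extension of $\Gamma$.
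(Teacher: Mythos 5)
Your construction of the $\bbP^{24}$ model, the degree computation, and the identification of $\bbP$ as $Y\cap\{s_1=s_2=0\}$ all match the paper. For the statement that $Y$ itself is not a cone, you take a genuinely different and, I believe, valid route: you exploit the structure of $\mathbf X$ as a cone over $\bbP'=\bbP(1^3,2)$ with vertex the plane $\Pi=\langle p_{s_0},p_{s_1},p_{s_2}\rangle$ and the fact that the anticanonical image of $\bbP'$ contains no lines, so that every line of $Y$ is forced into a single fibre $\bbP^3_\xi$. One correction: since $\Pi\subset Y$ and $\Pi\subset\bbP^3_\xi$ for every $\xi$, the intersection $Y\cap\bbP^3_\xi$ is not a $2$-plane but the union $\Pi\cup\bbP^2_\xi$ of two $2$-planes meeting along a line; your subsequent steps (no vertex on $\Pi$, hence the vertex would be a point $q\notin\Pi$ and $Y\subset\bbP^3_{\rho(q)}$) survive this correction, because a line in the union of two planes must lie in one of them, but the statement should be fixed. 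The paper instead derives ``$Y$ is not a cone'' as a corollary of the stronger ``no cone over $\Gamma$'' assertion, so your direct argument is a legitimate alternative for that half.

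The genuine gap is in the \emph{Moreover} part, precisely in the case you yourself flag as ``the main obstacle'': a linear surface section $Y\cap\Lambda$ which is a cone over $\Gamma$ with vertex $q\in\Pi$. This is exactly the case on which the paper spends essentially all of its proof (there, after reducing to a vertex of the form $[0:0:0:0:s_0:s_1:s_2]$, one normalises the vertex to $p_{s_2}$ by a linear change of the $s_i$ and shows that homogeneity of the ideal of $S$ in the chart $s_2=1$ would force $a_{02}=a_{12}=a_{22}=0$, contradicting invertibility). Your sketch does not close this case: the condition for $\Lambda\cap\bbP^3_\xi$ to contain a line is that the $3\times 4$ matrix $[\,L_j(P_\xi)\mid L_j(p_{s_i})\,]$ have rank $\leq 2$, not rank $\leq 1$ as you assert (rank $\leq 1$ would give a $2$-dimensional intersection), so the deductions you draw from it --- that the constant block has rank $\leq 1$ and that the quartics $L_j|_{\bbP'}$ fall into a pencil --- are not justified; with the correct rank $\leq 2$ condition one only gets, for instance, that a single linear combination of the $L_j|_{\bbP'}$ vanishes along the curve $B$, which is far from a contradiction. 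The phrase ``after unwinding'' is doing all the work here. To complete this case you would need either to carry out the correct rank analysis on the $2$-planes $\bbP^2_\xi$ (using that the ruling lines through $q$ lie in $\bbP^2_\xi$ rather than merely in $\bbP^3_\xi$), or to import an argument like the paper's homogeneity computation.
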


\begin{proof}
Consider the linear system $|\Oo_{\mathbf X}(4)|$. It is very ample, as the morphism it induces coincides with the $4$-Veronese of $\mathbf X$. Hence, by restriction, it realizes $Y$ in $\bbP^{24}$ as a variety of degree $[\Oo_{\mathbf X}(5)] \cdot [\Oo_{\mathbf X}(4)]^5 = \frac{4^5 \times 5}{4^3\times 2} = 40 = 2g - 2$. This model is an extension of $\Gamma$, as
$$
Y \cap \left\{ s_0 = -f_4(\mathbf u,v),\, s_1 = s_2 = g_4(\mathbf u,v) = 0 \right\} = \Gamma.
$$
Moreover, we have by adjunction $K_\Gamma = \mathcal O_\mathbf{X}(4)|_\Gamma$, so the linear section $\Gamma \subset Y$ is the canonical model of $\Gamma$, as required.

The fivefold $Y$ in $\bbP^{24}$ is a maximal extension of $\Gamma$ by \color{purple}Theorem \ref{thm:canonical curve max extension} \color{black}since it has dimension $1+\alpha(\Gamma,K_\Gamma)$ and it contains $\bbP = \bbP(1,4,5,10)$ as a $3$-fold linear section. Indeed, as indicated in \color{purple}\hyperlink{Table 3}{Table 3}\color{black}, $\bbP$ embeds into $\bbP(1,1,1,2,4)_{[\mathbf u:v:s_0]}$ as the quintic hypersurface $u_0s_0 = u_1^5$. It follows that $Y$ is an extension of $\bbP$, as 
$$
\bbP = Y\cap \left\{ s_1 = s_2 = 0 \right\}.
$$
Now we assume by contradiction that $Y$ is a cone in $\bbP^{24}$. Then it admits a linear surface section which is a cone over $\Gamma$. But such a cone in $Y$ does not exist, by the following argument.

We identify the total family of surface extensions of $\Gamma$ inside $Y$ via the following method: the equations which cut out $\Gamma$ from $Y$ being
$$
s_0 + f_4(\mathbf u,v) = s_1 = s_2 = g_4(\mathbf u,v) = 0,
$$
any surface extension of $\Gamma$ which is a linear section of $Y\subset \bbP^{24}$ is given by three linear combinations of the equations above. In other words, any surface extension of $\Gamma$ inside $Y$ is given by a set of four equations in $\mathbf X = \bbP(1^3,2,4^3)_{[\mathbf u:v:\mathbf s]}$ as follows:
\begin{equation}
\label{eq:firsthyperplane}
\lambda_0(s_0 + f_4(\mathbf u,v)) + \lambda_1 s_1 + \lambda_2 s_2 + \lambda_3 g_4(\mathbf u,v) = 0,
\end{equation}
\begin{equation}
\label{eq:secondhyperplane}
\mu_0(s_0 + f_4(\mathbf u,v)) + \mu_1 s_1 + \mu_2 s_2 + \mu_3 g_4(\mathbf u,v) = 0,
\end{equation}
\begin{equation}
\label{eq:thirdhyperplane}
\eta_0(s_0 + f_4(\mathbf u,v)) + \eta_1 s_1 + \eta_2 s_2 + \eta_3 g_4(\mathbf u,v) = 0,
\end{equation}
\begin{equation}
\label{eq:Yequation}
u_0s_0 + u_1s_1 + u_2s_2 = u_1^5,
\end{equation}
where the three vectors $(\lambda_0,\lambda_1,\lambda_2,\lambda_3), (\mu_0,\mu_1,\mu_2,\mu_3)$ and $(\eta_0,\eta_1,\eta_2,\eta_3)$ are linearly independent.

Assume by contradiction that there exists such a linear surface section $S$ which is a cone over $\Gamma$, given by the four equations \color{purple}(\ref{eq:firsthyperplane})\color{black}, \color{purple}(\ref{eq:secondhyperplane})\color{black}, \color{purple}(\ref{eq:thirdhyperplane}) \color{black}and \color{purple}(\ref{eq:Yequation})\color{black}. In particular, $S$ contains a line $\ell \subset \mathbf X$ such that $\ell \cdot \mathcal O_\mathbf{X}(4) = 1$ and $\ell \cdot \mathcal O_\mathbf{X}(2) = \frac{1}{2}$. Let $U \subset \mathbf X$ denote the locus where $\mathcal O_\mathbf{X}(2)$ is locally free: it is the complement in $\mathbf 
X$ of the vanishing locus
$$
u_0 = u_1 = u_2 = v = 0,
$$
i.e., $U$ is the complement of the weighted projective subspace spanned by the three points $p_{s_0}$, $p_{s_1}$ and $p_{s_2}$.

\begin{enumerate}
\item[$\bullet$] If there exists a line $\ell \subset S$ such that $\ell \subset U$, then $\mathcal O_\mathbf{X}(2)$ is Cartier on $\ell$, which yields a contradiction with  $\ell \cdot \mathcal O_\mathbf{X}(2) = \frac{1}{2}$.
\item[$\bullet$] If all the lines $\ell \subset S$ meet $\mathbf X - U$, then the vertex point of the cone $S$ belongs to $\mathbf X - U$. Such a point is of the form
$$
[0:0:0:0:s_0:s_1:s_2]
$$
with respect to the coordinates $[u_0:u_1:u_2:v:s_0:s_1:s_2]$ on $\mathbf X$. Up to a change of the coordinates $s_0,s_1,s_2$, we may assume that the vertex point is 
$$
p_{s_2} = [0:0:0:0:0:0:1].
$$ 
In other words, there exists a change of variables of the form 
$$
[s_0:s_1:s_2] \mapsto A\cdot [s_0:s_1:s_2],
$$
with 
$$
A = \left( \begin{array}{ccc}
a_{00} & a_{01} & a_{02} \\
a_{10} & a_{11} & a_{12} \\
a_{20} & a_{21} & a_{22}
\end{array} \right) \in \mathrm{GL}_3(\C),
$$
which moves the vertex point of $S$ to $p_{s_2}$. Such a linear transformation of the coordinates $s_i$ applied to the equations \color{purple}(\ref{eq:firsthyperplane})\color{black}, \color{purple}(\ref{eq:secondhyperplane})\color{black}, \color{purple}(\ref{eq:thirdhyperplane}) \color{black}and \color{purple}(\ref{eq:Yequation}) \color{black}yields
\begin{equation}
\label{eq:firsthyperplane'}
\begin{array}{rccl}
0 & = & & \lambda_0(a_{00}s_0+a_{01}s_1+a_{02}s_2) \\
& & + & \lambda_1 (a_{10}s_0+a_{11}s_1+a_{12}s_2) \\
& & + & \lambda_2 (a_{20}s_0+a_{21}s_1+a_{22}s_2) \\
& & + & \lambda_0f_4 + \lambda_3 g_4 
\end{array}
\end{equation}
\begin{equation}
\label{eq:secondhyperplane'}
\begin{array}{rccl}
0 & = & & \mu_0(a_{00}s_0+a_{01}s_1+a_{02}s_2) \\
& & + & \mu_1 (a_{10}s_0+a_{11}s_1+a_{12}s_2) \\
& & + & \mu_2 (a_{20}s_0+a_{21}s_1+a_{22}s_2) \\
& & + & \mu_0 f_4 + \mu_3 g_4
\end{array}
\end{equation}
\begin{equation}
\label{eq:thirdhyperplane'}
\begin{array}{rccl}
0 & = & & \eta_0(a_{00}s_0+a_{01}s_1+a_{02}s_2) \\
& & + & \eta_1 (a_{10}s_0+a_{11}s_1+a_{12}s_2) \\
& & + & \eta_2 (a_{20}s_0+a_{21}s_1+a_{22}s_2) \\
& & + & \eta_0 f_4 + \eta_3 g_4
\end{array}
\end{equation}
\begin{equation}
\label{eq:Yequation'}
\begin{array}{rccl}
0 & = & & u_0(a_{00}s_0+a_{01}s_1+a_{02}s_2) \\
 & & + & u_1(a_{10}s_0+a_{11}s_1+a_{12}s_2) \\
 & & + & u_2(a_{20}s_0+a_{21}s_1+a_{22}s_2) \\
 & & - & u_1^5.
\end{array}
\end{equation}
Since $S|_{s_2 = 1}$ is an affine cone, the ideal $I(S)$ of $S$ in the ring $\C[u_0,u_1,u_2,v,s_0,s_1,s_2]$ admits four generators (namely, three of degree $4$ and one of degree $5$) whose restrictions to $s_2 = 1$ are homogeneous. Note that in the above, the equation \color{purple}(\ref{eq:Yequation'}) \color{black} has degree $5$, while \color{purple}(\ref{eq:firsthyperplane'})\color{black}, \color{purple}(\ref{eq:secondhyperplane'})\color{black}, \color{purple}(\ref{eq:thirdhyperplane'}) \color{black}have degree $4$. As a consequence, as generators of the ideal $I(S)$ not involving the variable $s_2$, there are 
\begin{enumerate}
\item[$(i)$] three homogeneous quartics which are linear combinations of \color{purple}(\ref{eq:firsthyperplane'})\color{black}, \color{purple}(\ref{eq:secondhyperplane'}) \color{black}and \color{purple}(\ref{eq:thirdhyperplane'}) \color{black}with constant coefficients, 
\item[$(ii)$] one homogeneous quintic which is a combination of \color{purple}(\ref{eq:Yequation'}) \color{black}with \color{purple}(\ref{eq:firsthyperplane'})\color{black}, \color{purple}(\ref{eq:secondhyperplane'}) \color{black}and \color{purple}(\ref{eq:thirdhyperplane'})\color{black}.
\end{enumerate}
Taking a linear combination as $(i)$ suggests, up to a change of notation for coefficients $\lambda_i,\mu_i,\eta_i$, we obtain three equations of the same form as \color{purple}(\ref{eq:firsthyperplane'})\color{black}, \color{purple}(\ref{eq:secondhyperplane'}) \color{black}and \color{purple}(\ref{eq:thirdhyperplane'}) \color{black}which do not involve $s_2$.

Then the condition $(ii)$ implies $a_{02}=a_{12}=a_{22} = 0$, which is not possible.
\end{enumerate}
\end{proof}

Letting $(\lambda_0,\lambda_1,\lambda_2)$ move in $\C^3$, we get a family of K3 surfaces 
$$
Y\cap \left\{ s_0 = \lambda_0g_4(\mathbf u,v) - f_4(\mathbf u,v),\, s_1 = \lambda_1g_4(\mathbf u,v),\, s_2 = \lambda_2g_4(\mathbf u,v) \right\}
$$
which are all linear sections of $Y$ and contain $\Gamma$ as a hyperplane section. Indeed, the curve $\Gamma$ is cut out on all of them by $\left\{ g_4(\mathbf u,v) = 0\right\}$. Among them, those that are members of the linear system $\mathcal L$ are those parameterized by $\lambda_1 = \lambda_2 = 0$. The other surface extensions of $\Gamma$ could not be recovered from $\bbP$ and require the introduction of the birational model $\varphi : \bbP \dashrightarrow \bbP'$.

\begin{lemma} \label{lem:1 4 5 10 universal extension}
The variety $Y$ in $\bbP^{24}$ is the universal extension of $\Gamma$.
\end{lemma}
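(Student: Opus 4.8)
The plan is to deduce the statement directly from Lemma~\ref{lem:universalcharacterization}, whose hypotheses are all met by work already carried out. First I would record the standing facts: $\Gamma$ is a smooth canonical curve of genus $g = 21 \geq 11$ (see Table~2), and $\mathrm{Cliff}(\Gamma) > 2$ by Corollary~\ref{cor:clifford index}, so $\mathrm{Cliff}(\Gamma) \geq 3$. Here $\alpha = \mathrm{cork}(\Phi_{K_\Gamma}) = \alpha(\Gamma,K_\Gamma) = 4$, again by Table~2. Next, Lemma~\ref{lem:1 4 5 10 max extension} provides a model of $Y$ in $\bbP^{24} = \bbP^{g-1+\alpha}$ which is a maximal extension of $\Gamma$: it has dimension $5 = 1+\alpha$, it contains the canonical model of $\Gamma$ as a linear section (because $K_\Gamma = \Oo_{\mathbf{X}}(4)|_\Gamma$ under the $\Oo_{\mathbf{X}}(4)$-embedding), and $\dim Y = 5 \geq 2$. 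Thus $Y$ satisfies every running hypothesis of Lemma~\ref{lem:universalcharacterization}.

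By that lemma it then suffices to check that $Y$ contains no cone over $\Gamma$ as a linear surface section. But this is exactly the second assertion of Lemma~\ref{lem:1 4 5 10 max extension}: any surface extension of $\Gamma$ inside $Y\subset \bbP^{24}$ is cut out by three general linear combinations of $s_0 + f_4(\mathbf{u},v)$, $s_1$, $s_2$, $g_4(\mathbf{u},v)$ together with equation~(\ref{eq:Yequation}), and the vertex analysis there shows none of these can be a cone over $\Gamma$. Hence Lemma~\ref{lem:universalcharacterization} applies and $Y$ is the universal extension of $\Gamma$.

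The only genuinely delicate point is the "no cone" statement, and it has already been dispatched in Lemma~\ref{lem:1 4 5 10 max extension} by placing the hypothetical vertex at the coordinate point $p_{s_2}$, transporting the four defining equations by a matrix $A \in \mathrm{GL}_3(\C)$, and observing that the structure of the ideal of an affine cone (three homogeneous quartic generators and one homogeneous quintic generator, none involving $s_2$) forces $a_{02} = a_{12} = a_{22} = 0$, a contradiction. So once Lemma~\ref{lem:1 4 5 10 max extension} is in place there is no further obstacle and the present lemma is a two-line deduction. As a consistency check I would also note that the explicit family $Y \cap \{ s_0 = \lambda_0 g_4 - f_4,\ s_1 = \lambda_1 g_4,\ s_2 = \lambda_2 g_4\}$, $(\lambda_0,\lambda_1,\lambda_2)\in\C^3$, realizes a dense open subset of the $\bbP^{\alpha-1} = \bbP^3$ parametrizing surface extensions of $\Gamma$ as linear sections of $Y$, in agreement with universality.
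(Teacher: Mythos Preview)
Your proof is correct and follows exactly the same approach as the paper's own proof, which is simply a one-line invocation of Lemma~\ref{lem:universalcharacterization} using the no-cone assertion already established in Lemma~\ref{lem:1 4 5 10 max extension}. Your additional verification of the hypotheses (genus, Clifford index, dimension) and the recap of the vertex argument are faithful elaborations of what the paper leaves implicit.
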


\begin{proof}
By \color{purple}Lemma \ref{lem:universalcharacterization}\color{black}, we may conclude that $Y$ is the universal extension of $\Gamma$, since it contains no cone over $\Gamma$ as a linear surface section.
\end{proof}

As a sanity check, let us see what happens if we try to build a larger extension of $\bbP$. One might consider adding a coordinate $s_3$ of weight $4$ and the hypersurface
$$
\ell_0s_0 + \ell_1s_1 + \ell_2s_2 + \ell_3s_3 = u_1^5
$$
where $\ell_0,\ell_1,\ell_2,\ell_3$ are degree $1$ homogeneous polynomials in the variables $u_0,u_1$ and $u_2$. This is $6$-dimensional and indeed contains $\Gamma$ as a linear section; however, the $\ell_i$'s are linearly dependent and thus the variety given by the equation above is a cone over $Y$.

\subsection{$\bbP = \bbP(1,2,6,9)$}

By \color{purple}\hyperlink{Table 4}{Table 4} \color{black}and \color{purple}\hyperlink{Table 5}{Table 5}\color{black}, the curve $\Gamma$ is given in $\bbP(1,1,3,5)_{[\mathbf u:v:s]}$ by the following two equations
$$
u_0f_9(\mathbf u,v,s) + s^2 = g_9(\mathbf u,v,s) = 0
$$
where $f_9$ and $g_9$ are general homogeneous polynomials of degree $9$. Adding two coordinates $t_0$ and $t_1$ of weight $9$, consider the $10$-ic hypersurface $Y$ in $\mathbf X = \bbP(1^2,3,5,9^2)_{[\mathbf u:v:s:\mathbf t]}$ given by the equation
$$
u_0t_0 + u_1t_1 = s^2.
$$

\begin{lemma}
\label{lem:1 2 6 9 extension very ample}
The restriction of $\mathcal O_\mathbf X(9)$ to $Y$ is a very ample line bundle.
\end{lemma}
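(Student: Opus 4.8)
The plan is to deduce very ampleness from Lemma~\ref{lem:very ample characterization}. First I would observe that $\mathcal O_{\mathbf X}(9)$ restricts to an honest line bundle on $Y$: the integer $9$ is a multiple of every weight of $\mathbf X$ except the weight $5$ carried by $s$, so $\mathcal O_{\mathbf X}(9)$ is locally free on the union of the affine charts $\{u_0\neq 0\},\{u_1\neq 0\},\{v\neq 0\},\{t_0\neq 0\},\{t_1\neq 0\}$, that is, away from the single point $p_s=[0:0:0:1:0:0]$. Since the defining equation $F:=u_0t_0+u_1t_1-s^2$ does not vanish at $p_s$, we have $p_s\notin Y$, hence $\mathcal O_{\mathbf X}(9)|_Y$ is a line bundle; it is ample because $\mathcal O_{\mathbf X}(9)$ is. By Lemma~\ref{lem:very ample characterization} it then suffices to prove that the section ring $A:=\bigoplus_{n\geq0}H^0(Y,\mathcal O_Y(9n))$ is generated by $A_1=H^0(Y,\mathcal O_Y(9))$.

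The second step is to identify $A$ with a Veronese subring of the homogeneous coordinate ring of $Y$. Writing $R=\C[\mathbf u,v,s,\mathbf t]$ with the grading of $\mathbf X$ and $F\in R_{10}$ as above, the exact sequence $0\to\mathcal O_{\mathbf X}(9n-10)\xrightarrow{\,\cdot F\,}\mathcal O_{\mathbf X}(9n)\to\mathcal O_Y(9n)\to0$ together with $H^0(\mathbf X,\mathcal O_{\mathbf X}(k))=R_k$ and the vanishing $H^1(\mathbf X,\mathcal O_{\mathbf X}(k))=0$ (which holds since $\dim\mathbf X=4$) yields $A_n=(R/(F))_{9n}=R_{9n}/F\cdot R_{9n-10}$. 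So I must show that the ninth Veronese subring of $R/(F)$ is generated in degree $1$.

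For the last step I would argue combinatorially on monomials. The relation $s^2\equiv u_0t_0+u_1t_1\pmod F$, applied repeatedly, shows that every monomial of $R_{9n}$ is congruent mod $F$ to a linear combination of monomials of degree $9n$ in which $s$ occurs with exponent $0$ or $1$; hence $A_n$ is spanned by the classes of such ``$s$-reduced'' monomials. It then suffices to show that every $s$-reduced monomial $m$ of degree $9n$ with $n\geq2$ has a monomial divisor of degree exactly $9$, for then peeling off such a divisor and inducting on $n$ writes $m$ as a product of $n$ elements of $A_1$. The existence of a degree-$9$ divisor is a short case check: if $t_0$ or $t_1$ divides $m$, use it; otherwise $m=u_0^au_1^bv^cs^\epsilon$ with $\epsilon\in\{0,1\}$ and $a+b+3c+5\epsilon\geq18$, and one takes $v^3$ if $c\geq3$, a suitable divisor $u_0^{a'}u_1^{b'}$ of degree $9$ if $\epsilon=0$ and $c\leq2$ (possible because then $a+b\geq12$), and $u_0vs$ or $u_1vs$ (resp.\ a suitable $u_0^{a'}u_1^{b'}s$) if $\epsilon=1$ and $c\geq1$ (resp.\ $c=0$). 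I expect this final, purely numerical, divisibility verification to be the only genuine point of the proof: it is precisely the statement that the weight-$5$ coordinate $s$—which is what keeps $\mathcal O_{\mathbf X}(9)$ from being basepoint-free on all of $\mathbf X$—becomes harmless on $Y$ once $F$ is used to bring its exponent down to at most $1$.
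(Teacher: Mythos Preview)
Your proof is correct. The core idea—using the relation $s^2\equiv u_0t_0+u_1t_1$ on $Y$ to reduce every monomial to one with $s$-exponent at most $1$, and then checking that such monomials of degree $9n$ always admit a degree-$9$ monomial divisor—is exactly the right one, and your case check goes through (note that the potentially dangerous case $a=b=0$ with $\epsilon=1$ cannot occur, since $9n-5$ is never a multiple of $3$). One small slip: $\mathbf X=\bbP(1^2,3,5,9^2)$ has dimension $5$, not $4$; this only makes the $H^1$-vanishing you invoke easier.

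The paper takes a closely related but more geometric route. Rather than passing through Lemma~\ref{lem:very ample characterization}, it factors the map $|\mathcal O_{\mathbf X}(9)|$ as the $9$-Veronese embedding $v_9:\mathbf X\hookrightarrow\bbP(1^{31},2^5,3^3,4^2,5)$ followed by the projection to $\bbP^{30}$ given by the weight-$1$ coordinates, and then shows directly that on $v_9(Y)$ the higher-weight coordinates $s^3\underline{\mathbf h^{(3)}},\,s^5\underline{\mathbf h^{(2)}},\,s^7u_i,\,s^9$ can be rewritten as polynomials in the $9$-ics $\mathtt f_i$ by repeatedly substituting $s^2=u_0t_0+u_1t_1$; this exhibits an explicit inverse $\omega(Y)\to v_9(Y)$. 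So both arguments rest on the same relation and the same reduction of $s$-powers, but the paper packages it as ``construct the inverse morphism by hand'' while you package it as ``the section ring is generated in degree $1$.'' Your version has the virtue of reusing the general criterion already available in the paper; the paper's version has the virtue of making the inverse map completely explicit.
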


\begin{proof}
The linear system $|\Oo_{\mathbf X}(9)|$ has one base point in $\mathbf X$ but its restriction to $Y$ is basepoint-free, hence a line bundle. The rational map $\omega : \mathbf X \dashrightarrow \bbP^{30}$ induced by the linear system $|\mathcal O_\mathbf X(9)|$ factorizes via the $9$-Veronese of $\mathbf X$, as the following composition:
\begin{center}
\begin{tikzcd}
\mathbf X \arrow[r,hookrightarrow,"v_9"] & \bbP(1^{31},2^5,3^3,4^2,5) \arrow[rr,dashrightarrow,"|\mathcal O(1)|"] & & \bbP^{30}.
\end{tikzcd}
\end{center}
The $9$-Veronese embedding $v_9$ is written as follows in the coordinates $[u_0:u_1:v:s:t_0:t_1]$ of $\mathbf X$:
$$
[\mathbf u:v:s:\mathbf t] \mapsto [\underline{\mathtt f}:s^3\underline{\mathbf h^{(3)}}:s^5\underline{\mathbf h^{(2)}}:s^7u_0:s^7u_1:s^9]\in \bbP(1^{31},2^5,3^3,4^2,5),
$$
with the notations
$$
s^5\underline{\mathbf h^{(2)}} = [s^5u_0^2:s^5u_0u_1:s^5u_1^2],
$$
$$
s^3\underline{\mathbf h^{(3)}} = [s^3v:s^3u_0^3:s^3u_0^2u_1:s^3u_0u_1^2:s^3u_1^3],
$$
$$
\underline{\mathtt f} = [\mathtt f_0: \mathtt f_1: \cdots : \mathtt f_{30}] \text{ a base of 9-ics on } \mathbf X,
$$
so that, without loss of generality, we may assume $\mathtt f_0 = t_0, \mathtt f_1 = t_1$ and $\mathtt f_i$ involves only the variables $u_0,u_1,v$ and $s$ as soon as $i\geq 2$. The hypersurface $Y$ of $\mathbf X$ being given by the equation $s^2 = u_0t_0+u_1t_1$, the restriction of $v_9$ to $Y$, which is an isomorphism onto its image, is given by the expression above with
$$
s^9 = s(u_0t_0+u_1t_1)^4,
$$
$$
s^7u_i = s(u_0t_0+u_1t_1)^3u_i \text{ for } i=0,1,
$$
$$
s^5\underline{\mathbf h^{(2)}} = s(u_0t_0+u_1t_1)^2\underline{\mathbf h^{(2)}},
$$
$$
s^3\underline{\mathbf h^{(3)}} = s(u_0t_0+u_1t_1)\underline{\mathbf h^{(3)}}.
$$
To prove that $\mathcal O_\mathbf X(9)|_Y$ is very ample, it is enough to prove that $v_9(Y)$ can be recovered from $\omega(Y)$, i.e., from the $\mathtt f_i$'s. It is the case, as
$$
s(u_0t_0+u_1t_1)^4 = s\sum_{i=0}^4 {4\choose i} u_0^it_0^i + u_1^{4-i}t_1^{4-i},
$$
in which we have assumed $t_0 = \mathtt f_0,t_1 = \mathtt f_1$, and we can express each $su_0^iu_1^{4-i}$ as a linear combination of the other $\mathtt f_i$'s since it has degree $9$; moreover,
$$
s(u_0t_0+u_1t_1)^3u_i = su_i\sum_{j=0}^3 {3\choose j} u_0^jt_0^j + u_1^{3-j}t_1^{3-j},
$$
where each $su_iu_0^{j}u_1^{3-j}$ is a $9$-ic, hence is also a linear combination of the $\mathtt f_i$'s; furthermore,
$$
s(u_0t_0+u_1t_1)^2\underline{\mathbf h^{(2)}} = s\underline{\mathbf h^{(2)}}(u_0^2t_0^2 + 2u_0u_1t_0t_1 + u_1^2t_1^2),
$$
where $s\underline{\mathbf h^{(2)}}u_0^2,s\underline{\mathbf h^{(2)}}u_0u_1$ and $s\underline{\mathbf h^{(2)}}$ are collections of $9$-ics, hence of linear combinations of the $\mathtt f_i$'s; lastly, in
$$
s(u_0t_0 + u_1t_1)\underline{\mathbf{h^{(3)}}},
$$
both $su_0\underline{\mathbf h^{(3)}}$ and $su_1\underline{\mathbf h^{(3)}}$ are collections of $9$-ics, hence of linear combinations of the $\mathtt f_i$'s. We may thus conclude that $v_9(Y)$ can be expressed in coordinates using only the $\mathtt f_i$'s, meaning that there exists an inverse morphism $\omega(Y) \to v_9(Y) \simeq Y$ to $\omega|_Y$.
\end{proof}

\begin{lemma}
The variety $Y$ has a model in $\bbP^{30}$ which is a maximal extension of $\bbP$.
\end{lemma}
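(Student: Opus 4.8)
The plan is to repeat the argument of Lemma~\ref{lem:1 4 5 10 max extension}, the only genuinely new input being the verification that $Y$ is not a cone. First I would collect the numerical facts. As $\mathbf X = \bbP(1^2,3,5,9^2)$ has dimension $5$ and $Y$ is a hypersurface, $\dim Y = 4 = 1+\alpha(\Gamma,K_\Gamma)$ by Table~2. By Lemma~\ref{lem:1 2 6 9 extension very ample} the line bundle $\mathcal O_\mathbf X(9)|_Y$ is very ample, and since the kernel of $\mathcal O_\mathbf X(9) \to \mathcal O_Y(9)$ is $\mathcal O_\mathbf X(-1)$, whose $H^0$ and $H^1$ both vanish, we get $h^0(Y,\mathcal O_Y(9)) = h^0(\mathbf X,\mathcal O_\mathbf X(9)) = 31$; so this complete system embeds $Y$ in $\bbP^{30}$, with degree $[\mathcal O_\mathbf X(10)]\cdot[\mathcal O_\mathbf X(9)]^{4} = \tfrac{10\cdot 9^{4}}{1\cdot 1\cdot 3\cdot 5\cdot 9\cdot 9} = 54 = 2g-2$, and, by adjunction, $-K_Y = \mathcal O_\mathbf X(18)|_Y = (\mathcal O_\mathbf X(9)|_Y)^{\otimes 2}$, so that a general codimension-two linear section of this model is a K3 surface and a general codimension-three section is a canonical curve of genus $g = 28$. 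Setting $t_1 = 0$ identifies $\mathbf X \cap \{t_1=0\}$ with $X = \bbP(1,1,3,5,9)_{[\mathbf u:v:s:t_0]}$ and $Y \cap \{t_1=0\}$ with the $10$-ic $u_0t_0 = s^2$, which is $v_2(\bbP)$ by Table~3; since $-K_\bbP = v_2^{*}\mathcal O_X(9) = \mathcal O_\mathbf X(9)|_\bbP$ is very ample by Theorem~\ref{thm:anticanonical very ample}, the image of $\bbP$ inside the hyperplane $\{t_1=0\}\cong\bbP^{29}$ is exactly its anticanonical model. Hence $\bbP = Y\cap\{t_1=0\}$ is a hyperplane section of $Y\subset\bbP^{30}$, and likewise $\Gamma = Y\cap\{t_1 = 0,\ t_0 + f_9(\mathbf u,v,s) = 0,\ g_9(\mathbf u,v,s)=0\}$ is the canonical model of $\Gamma$ (using $K_\Gamma = \mathcal O_\mathbf X(9)|_\Gamma$). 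Thus $Y$ is an extension of $\bbP$ of dimension $1+\alpha(\Gamma,K_\Gamma)$, which by Theorem~\ref{thm:canonical curve max extension} is the largest possible dimension of an extension of $\Gamma$, hence of $\bbP$; so $Y$ will be a maximal extension of $\bbP$ once we know it is not a cone.

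That last point I would settle exactly as in Lemma~\ref{lem:1 4 5 10 max extension}. If $Y$ were a cone, it would contain a linear surface section which is a cone over $\Gamma$. Any surface section of $Y$ through $\Gamma$ has the form $S = Y\cap\{E_1 = E_2 = 0\}$, with $E_1, E_2$ two independent linear combinations of the degree-$9$ forms $t_0 + f_9$, $t_1$, $g_9$ that cut out $\Gamma$. If such an $S$ is a cone over $\Gamma$ with vertex a point $V\in\mathbf X$, then a line $\ell$ of its ruling is a line of the $\bbP^{30}$-model, so $\ell\cdot\mathcal O_\mathbf X(9)=1$ and hence $\ell\cdot\mathcal O_\mathbf X(5)=\tfrac59$, $\ell\cdot\mathcal O_\mathbf X(3)=\tfrac13$ are non-integral. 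Comparing with the loci on $\mathbf X$ where $\mathcal O_\mathbf X(5)$ and $\mathcal O_\mathbf X(3)$ fail to be locally free, and using $p_s\notin Y$, this forces every ruling line of $S$ to meet the coordinate line $L_0 = \{u_0 = u_1 = v = s = 0\}\subset Y$; as these lines sweep out the non-planar surface $S\supseteq\Gamma$, the vertex $V$ must lie on $L_0$. After a simultaneous linear change of $(u_0,u_1)$ and $(t_0,t_1)$ preserving the shape $u_0t_0 + u_1t_1 = s^2$ of the equation of $Y$, one may take $V = p_{t_1}$; then the cone condition forces the homogeneous ideal of $S$, restricted to the chart $\{t_1 = 1\}$, to have generators not involving $t_1$, and tracing this through the equations $Q := u_0t_0 + u_1t_1 - s^2$, $E_1$, $E_2$ shows the monomial $u_1t_1$ of $Q$ cannot be eliminated without making the coefficient vectors of $E_1$ and $E_2$ dependent --- a contradiction. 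Hence $Y$ carries no cone over $\Gamma$ as a surface linear section, is not a cone, and its model in $\bbP^{30}$ is a maximal extension of $\bbP$. (As in Lemma~\ref{lem:1 4 5 10 universal extension}, one then also obtains from Lemma~\ref{lem:universalcharacterization} that this model is the universal extension of $\Gamma$.)

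The main obstacle is this last step. Relative to the case $\bbP(1,4,5,10)$, the new features are that there are only two ``new'' coordinates $t_0, t_1$ (so the matrix used to normalize the vertex becomes a $2\times 2$ one), that $Y$ already contains the line $L_0$, which is disjoint from $\Gamma$, and that one must keep track of two distinct non-Cartier loci --- those of $\mathcal O_\mathbf X(3)$ and of $\mathcal O_\mathbf X(5)$ --- rather than one; some care is needed to check that together these still pin the hypothetical vertex onto $L_0$, and that the final ``$u_1t_1$ cannot be removed'' computation genuinely uses the linear independence of $E_1$ and $E_2$. Everything else is bookkeeping parallel to Lemma~\ref{lem:1 4 5 10 max extension}.
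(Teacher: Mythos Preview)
Your plan is correct and follows essentially the same approach as the paper's own proof. The paper is in fact briefer than you are: it records the degree, dimension and the linear section $\Gamma = Y\cap\{t_0=-f_9,\ t_1=g_9=0\}$, notes $\bbP = Y\cap\{t_1=0\}$ from \hyperlink{Table 3}{Table~3}, and for the ``not a cone'' step simply says ``by the same arguments as those mentioned in the proof of Lemma~\ref{lem:1 4 5 10 max extension}''. Your sketch of how that argument adapts --- two $t$-coordinates rather than three $s$-coordinates, the vertex being forced onto the line $L_0=\{u_0=u_1=v=s=0\}$ via the non-Cartier loci of $\mathcal O_\mathbf X(3)$ and $\mathcal O_\mathbf X(5)$, and the impossibility of eliminating $u_1t_1$ from $u_0t_0+u_1t_1=s^2$ --- is exactly the intended specialization, and your parenthetical remark on universality is the content of the paper's next lemma.
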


\begin{proof}
By \color{purple}Lemma \ref{lem:1 2 6 9 extension very ample}\color{black}, the linear system $|\mathcal O_\mathbf X(9)|_Y|$ defines an embedding which realizes $Y$ as a projective variety in $\bbP^{30}$ of degree $\frac{9^4\times 10}{9^2\times 5 \times 3} = 54 = 2g-2$. It has dimension $4 = 1+\alpha(\Gamma,K_\Gamma)$ by \color{purple}\hyperlink{Table 2}{Table 2} \color{black}and contains $\Gamma$ as a linear section in $\bbP^{27}$:
$$
Y\cap \left\{ t_0 = -f_9(\mathbf u,v,s),\, t_1 = g_9(\mathbf u,v,s) = 0 \right\} = \Gamma.
$$
Besides, the embedding of $Y$ in $\bbP^{30}$ is not a cone, by the same arguments as those mentioned in the proof of \color{purple}Lemma \ref{lem:1 4 5 10 max extension}\color{black}. The fourfold $Y$ is thus a maximal extension of $\Gamma$. Recall from \color{purple}\hyperlink{Table 3}{Table 3} \color{black}that $\bbP$ is the hypersurface of $\bbP(1,1,3,5,9)_{[\mathbf u:v:s:t_0]}$ given by the equation $u_0t_0 = s^2$. This shows that $\bbP = Y\cap \left\{ t_1=0 \right\}$.
\end{proof}

In particular,
$$
Y \cap \left\{ t_0 = \lambda_0 g_9(\mathbf u,v,s) - f_9(\mathbf u,v,s),\, t_1 = \lambda_1g_9(\mathbf u,v,s) \right\}
$$
describes a family of K3 surfaces in $Y$ indexed by $(\lambda_0,\lambda_1) \in \C^2$ which all contain $\Gamma$ as a hyperplane section. Among them, the members of $\mathcal L$, i.e., anticanonical divisors of $\bbP$, are the ones for which $\lambda_1 = 0$.

\begin{lemma}
The variety $Y$ in $\bbP^{30}$ is the universal extension of $\Gamma$.
\end{lemma}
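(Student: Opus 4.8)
\emph{Plan.} I would apply Lemma~\ref{lem:universalcharacterization}. Here $\dim Y = 4 \geq 2$, $g = 28 \geq 11$, $\mathrm{Cliff}(\Gamma) \geq 3$ by Corollary~\ref{cor:clifford index}, and $Y$ is a maximal extension of $\Gamma$; hence it suffices to prove that $Y \subset \bbP^{30}$ admits no linear surface section which is a cone over $\Gamma$. I would run the argument of Lemma~\ref{lem:1 4 5 10 max extension}, adapted to the weights $(1,1,3,5,9,9)$ of $\mathbf X$.

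\emph{Locating the vertex.} Suppose $\tilde S \subset Y$ were such a cone, with vertex $q$. Every ruling line $\ell$ of $\tilde S$ maps isomorphically onto a line of $\bbP^{30}$, so $\ell \cdot \mathcal O_{\mathbf X}(9) = 1$, hence $\ell \cdot \mathcal O_{\mathbf X}(3) = \tfrac13 \notin \Z$. Since $\mathcal O_{\mathbf X}(3)$ is locally free away from $B := \{u_0 = u_1 = v = 0\}$ (it is trivialized by $u_0^3$, $u_1^3$, $v$ on the respective charts), every ruling line meets $B$. Now $B \cong \bbP(5,9,9)_{[s:t_0:t_1]}$, and as there is no degree-$9$ monomial in $s$ alone, $\omega$ restricts on $B$ to $[t_0 : t_1]$, mapping $B$ onto the line $L := \overline{\omega(p_{t_0})\,\omega(p_{t_1})} \subset \bbP^{30}$. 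If $q \notin L$, projecting the cone from $q$ would produce a surjection from a subscheme of the line $L$ onto $\Gamma$, hence a nonconstant morphism $\bbP^1 \to \Gamma$, impossible since $g(\Gamma) > 0$; so $q \in L$. Moreover the equation $u_0 t_0 + u_1 t_1 = s^2$ of $Y$ forces $s = 0$ on $B \cap Y$, so $\omega^{-1}(q)$ lies on the toric line $\{u_0 = u_1 = v = s = 0\} = \overline{p_{t_0} p_{t_1}}$. After a linear change of $(t_0,t_1)$ together with the corresponding change of $(u_0,u_1)$ preserving $u_0 t_0 + u_1 t_1$ (under which $f_9, g_9$ stay general), we may assume $q = \omega(p_{t_0}) = [1:0:\cdots:0]$.

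\emph{The contradiction.} Writing $\mathtt f_0 = t_0$, $\mathtt f_1 = t_1$ and the remaining $\mathtt f_i$ among degree-$9$ monomials in $u_0,u_1,v,s$, the curve $\Gamma$ is cut out of $Y$ by $H_1 : \mathtt f_0 + L_{f_9} = 0$, $H_2 : \mathtt f_1 = 0$, $H_3 : L_{g_9} = 0$ with $L_{f_9}, L_{g_9}$ linear in $\mathtt f_2,\dots,\mathtt f_{30}$, and $\tilde S$ is cut out of $Y$ by two independent combinations of $H_1, H_2, H_3$. Evaluating at $q$ gives $H_1(q) = 1 \neq 0$, $H_2(q) = H_3(q) = 0$, so both combinations lie in $\langle H_2, H_3\rangle$; that is, $\tilde S = Y \cap \{t_1 = 0,\ g_9 = 0\}$. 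On the other hand, multiplying the degree-$10$ equation $u_0 t_0 + u_1 t_1 - s^2 = 0$ of $Y \subset \mathbf X$ by $u_0^8$ and rewriting it through the $\mathtt f_i$'s yields a quadratic relation $\mathtt f_0 \mathtt f_i + \mathtt f_1 \mathtt f_j - \mathtt f_k^2 = 0$ in the homogeneous coordinate ring of $Y \subset \bbP^{30}$, with $\mathtt f_i = u_0^9$, $\mathtt f_j = u_0^8 u_1$, $\mathtt f_k = u_0^4 s$. Since $\tilde S$ is a cone with vertex $q$ and $H_1(q) \neq 0$, its ideal $I(\tilde S)$ is generated by forms not involving $H_1$; substituting $\mathtt f_0 = H_1 - L_{f_9}$ in the quadratic relation and reading off the coefficient of $H_1$ gives $\mathtt f_i = u_0^9 \in I(\tilde S)$, i.e.\ $u_0 \equiv 0$ on $\tilde S$. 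Combined with $t_1 = 0$ the equation of $Y$ then forces $s = 0$, so $\tilde S \subseteq \{u_0 = s = t_1 = 0,\ g_9 = 0\}$, a scheme of dimension at most $1$ — contradicting $\dim \tilde S = 2$. Hence no such cone exists and, by Lemma~\ref{lem:universalcharacterization}, $Y$ is the universal extension of $\Gamma$.

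\emph{Expected obstacle.} The delicate points are the precise location of the vertex on $\overline{p_{t_0}p_{t_1}}$ (which is what forces the two hyperplanes cutting $\tilde S$ to avoid $H_1$, and hence makes the structure of the defining equation of $Y$ bite) and producing the quadratic relation above in the \emph{projective} coordinate ring of $Y \subset \bbP^{30}$ — the latter being exactly the bookkeeping already carried out in the proof of Lemma~\ref{lem:1 2 6 9 extension very ample}. The remaining ingredients — the non-integrality of $\ell \cdot \mathcal O_{\mathbf X}(3)$, the passage to forms not involving $H_1$, and the final dimension count — are routine and mirror the case $\bbP(1,4,5,10)$.
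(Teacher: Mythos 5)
Your proposal is correct and follows essentially the same route as the paper, which for this lemma simply invokes Lemma \ref{lem:universalcharacterization} and asserts that the family of surface extensions (cut out by pairs of independent combinations of $t_0+f_9$, $t_1$, $g_9$) contains no cone "by the same argument as in Lemma \ref{lem:1 4 5 10 max extension}"; you have filled in that argument for the weights $(1^2,3,5,9^2)$, and your localization of the vertex on $\omega(\overline{p_{t_0}p_{t_1}})$ together with the relation $\mathtt f_0\mathtt f_i+\mathtt f_1\mathtt f_j-\mathtt f_k^2=0$ is a valid (indeed cleaner) way to run it. One small bookkeeping slip: after the linear change of $(t_0,t_1)$ and $(u_0,u_1)$ moving the vertex to $p_{t_0}$, the two forms cutting out $\tilde S$ span $\langle t_1-\epsilon f_9,\ g_9\rangle$ for some constant $\epsilon$ that need not vanish, so "combined with $t_1=0$ the equation of $Y$ forces $s=0$" is not available in general; but the conclusion $u_0^9\in I(\tilde S)$ still holds, and a direct dimension count of $Y\cap\{u_0=0\}\cap\{t_1=\epsilon f_9\}\cap\{g_9=0\}$ (a curve, for general $f_9,g_9$) gives the same contradiction.
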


The proof of this lemma follows a similar argument as the proof of \color{purple}Lemma \ref{lem:1 4 5 10 universal extension}\color{black}. From the equations of $\Gamma$ in $\mathbf X = \bbP(1^2,3,5,9^2)_{[\mathbf u:v:s:\mathbf t]}$:
$$
u_0t_0 + u_1t_1 - s^2 = t_0 + f_9(\mathbf u,v,s) = t_1 = g_9(\mathbf u,v,s) = 0,
$$
the first one of which being the equation of $Y$ in $\mathbf X$, we identify the whole family of surface extensions of $\Gamma$ in $Y$. This family is parametrized by $\bbP^2$ and it consists of all the intersections of $Y$ with the vanishing locus of any two independent linear combinations of
$$
t_0 + f_9(\mathbf u,v,s),\, t_1 \text{ and } g_9(\mathbf u,v,s).
$$
One shows by the same argument as in \color{purple}Lemma \ref{lem:1 4 5 10 max extension} \color{black}that this family contains no cone, and we may conclude that $Y$ is the universal extension of $\Gamma$ by \color{purple}Lemma \ref{lem:universalcharacterization}\color{black}.

\subsection{$\bbP = \bbP(1,3,8,12)$}

By \color{purple}\hyperlink{Table 4}{Table 4} \color{black}and \color{purple}\hyperlink{Table 5}{Table 5}\color{black}, the curve $\Gamma$ is given in $\bbP(1,1,3,4)_{[\mathbf u:v:s]}$ by the following equations
$$
u_0f_8(\mathbf u,v,s) + v^3 = g_8(\mathbf u,v,s) = 0,
$$
where $f_8$ and $g_8$ are general homogeneous polynomials of degree $8$. After adding two coordinates $t_0$ and $t_1$ of weight $8$, consider the $9$-ic hypersurface $Y$ in $\mathbf X = \bbP(1^2,3,4,8^2)_{[\mathbf u:v:s:\mathbf t]}$ of equation
$$
u_0t_0 + u_1t_1 = v^3.
$$

\begin{lemma}
The variety $Y$ has a model in $\bbP^{27}$ which is a maximal extension of $\bbP$.
\end{lemma}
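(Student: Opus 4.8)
The plan is to follow the template of the proofs of Lemmas~\ref{lem:1 4 5 10 max extension} and~\ref{lem:1 2 6 9 extension very ample}, with $\mathbf X=\bbP(1^2,3,4,8^2)_{[\mathbf u:v:s:\mathbf t]}$. The crucial first step is to show that $\mathcal O_{\mathbf X}(8)|_Y$ is very ample. On $\mathbf X$ the linear system $|\mathcal O_{\mathbf X}(8)|$ is not basepoint-free: since $3$ is the only weight of $\mathbf X$ not dividing $8$, every degree-$8$ monomial is divisible by one of $u_0,u_1,s,t_0,t_1$, hence they all vanish at the coordinate point $p_v=[0:0:1:0:0:0]$, which is therefore a base point (as in the \hyperlink{Remark}{Remark} of \hyperlink{section 2}{Section~2}). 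However the defining equation $u_0t_0+u_1t_1=v^3$ of $Y$ fails at $p_v$, so $p_v\notin Y$ and $|\mathcal O_{\mathbf X}(8)|_Y|$ is basepoint-free, hence defines a line bundle on $Y$.

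I would then follow the argument of Lemma~\ref{lem:1 2 6 9 extension very ample}. The morphism attached to $|\mathcal O_{\mathbf X}(8)|_Y|$ factors through the $8$-Veronese embedding $v_8$ of $\mathbf X$, followed by the projection keeping only the $28$ coordinates $\mathtt f_0,\dots,\mathtt f_{27}$ corresponding to the degree-$8$ monomials, normalized so that $\mathtt f_0=t_0$, $\mathtt f_1=t_1$ and $\mathtt f_i\in\C[u_0,u_1,v,s]$ for $i\ge 2$. It suffices to recover $v_8(Y)$ from the $\mathtt f_i$'s: the only homogeneous coordinates of $v_8(\mathbf X)$ that are not degree-$8$ monomials in $u_0,u_1,s,t_0,t_1$ are those carrying a power $v^c$ with $c\ge 3$, and on $Y$ one repeatedly substitutes $v^3=u_0t_0+u_1t_1$, which lowers the exponent of $v$ below $3$ at the cost of factors $t_0=\mathtt f_0$ and $t_1=\mathtt f_1$ of weight $8$. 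Expanding the powers of $u_0t_0+u_1t_1$ and splitting off these weight-$8$ factors, each such coordinate becomes a polynomial in the $\mathtt f_i$'s; this produces an inverse morphism onto $v_8(Y)\simeq Y$, hence the very ampleness of $\mathcal O_{\mathbf X}(8)|_Y$. I expect this bookkeeping, which amounts to checking that every generator of the $8$-th Veronese subring of $\C[\mathbf u,v,s,\mathbf t]$ reduces correctly modulo the equation of $Y$, to be the main technical obstacle, exactly as for $\bbP(1,2,6,9)$.

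Granting very ampleness, the conclusion is routine and follows the end of the proof of Lemma~\ref{lem:1 4 5 10 max extension}. The embedding $|\mathcal O_{\mathbf X}(8)|_Y|$ realizes $Y$ as a fourfold in $\bbP^{27}$ of degree $[\mathcal O_{\mathbf X}(9)]\cdot[\mathcal O_{\mathbf X}(8)]^4=\frac{9\cdot 8^4}{1\cdot 1\cdot 3\cdot 4\cdot 8\cdot 8}=48=2g-2$, with $\dim Y=4=1+\alpha(\Gamma,K_\Gamma)$ by \hyperlink{Table 2}{Table~2}. It contains $\Gamma$ as the linear section
$$
Y\cap\{\,t_0=-f_8(\mathbf u,v,s),\ t_1=g_8(\mathbf u,v,s)=0\,\}=\Gamma,
$$
and adjunction gives $K_\Gamma=\mathcal O_{\mathbf X}(8)|_\Gamma$, so this is the canonical model of $\Gamma$ in $\bbP^{g-1}=\bbP^{24}$. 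By \hyperlink{Table 3}{Table~3}, $\bbP$ embeds in $\bbP(1,1,3,4,8)_{[\mathbf u:v:s:t_0]}$ as the $9$-ic $u_0t_0=v^3$, so $\bbP=Y\cap\{t_1=0\}$ is a $3$-fold linear section of $Y$. Finally, the model of $Y$ in $\bbP^{27}$ is not a cone, by the same argument as in Lemma~\ref{lem:1 4 5 10 max extension}: a cone would carry a surface linear section that is a cone over $\Gamma$, whereas the family of all surface extensions of $\Gamma$ inside $Y$ (the intersections of $Y$ with two independent linear combinations of $t_0+f_8$, $t_1$ and $g_8$) contains no cone. It then follows from Theorem~\ref{thm:canonical curve max extension} (whose hypotheses $g=25\ge 11$ and $\mathrm{Cliff}(\Gamma)>2$ hold by \hyperlink{Table 2}{Table~2} and Corollary~\ref{cor:clifford index}) that $Y$ is a maximal extension of $\Gamma$; since $\bbP$ is a $3$-fold linear section of $Y$, it is also a maximal extension of $\bbP$.
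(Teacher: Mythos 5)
Your proof is correct and follows essentially the same route as the paper's: very ampleness of $\mathcal O_{\mathbf X}(8)|_Y$ by the substitution argument of Lemma \ref{lem:1 2 6 9 extension very ample}, the degree and dimension count, $\Gamma$ and $\bbP$ as linear sections, and the non-cone argument of Lemma \ref{lem:1 4 5 10 max extension}. Your degree computation $\frac{9\cdot 8^4}{3\cdot 4\cdot 8^2}=48=2g-2$ is the correct evaluation (the paper's stated value $46$ is a typo).
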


\begin{proof}
It is embedded in $\bbP^{27}$ by the restriction of the linear system $|\Oo_{\mathbf X}(8)|$; the very ampleness of $\mathcal O_\mathbf X(8)|_Y$ being proven by a similar argument as \color{purple}Lemma \ref{lem:1 2 6 9 extension very ample}\color{black}. This model has degree $\frac{8^4\times 9}{8^2\times 4 \times 3} = 46 = 2g-2$ and dimension $4 = 1+\alpha(\Gamma,K_\Gamma)$ by \color{purple}\hyperlink{Table 2}{Table 2} \color{black}and contains $\Gamma$ as a linear section:
$$
Y\cap \left\{ t_0 = -f_8(\mathbf u,v,s),\, t_1 = g_8(\mathbf u,v,s) = 0 \right\} = \Gamma.
$$
Hence it is a maximal extension of $\Gamma$. It is also an extension of $\bbP$; indeed, we know from \color{purple}\hyperlink{Table 3}{Table 3} \color{black}that $\bbP$ is the hypersurface $u_0t_0 = v^3$ is $\bbP(1,1,3,4,8)_{[\mathbf u:v:s:t_0]}$. This exhibits $\bbP$ as a hyperplane section of $Y$, which is $\bbP = Y\cap \left\{ t_1 = 0 \right\}$. The fact that the image of $Y$ in $\bbP^{27}$ is not a cone can be proven in the same way as in \color{purple}Lemma \ref{lem:1 4 5 10 max extension}\color{black}.
\end{proof}

Letting $(\lambda_0,\lambda_1)$ move in $\C^2$, we get a family 
$$
Y\cap \left\{ t_0 = \lambda_0 g_8(\mathbf u,v,s) - f_8(\mathbf u,v,s),t_1 = \lambda_1 g_8(\mathbf u,v,s) \right\}
$$
of K3 surfaces in $Y$ which contain $\Gamma$ as a hyperplane section. The surfaces in this family that are members of $\mathcal L$, i.e., anticanonical divisors of $\bbP$, are the ones for which $\lambda_1 = 0$.

\begin{lemma}
The variety $Y$ in $\bbP^{27}$ is the universal extension of $\Gamma$.
\end{lemma}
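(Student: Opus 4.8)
The plan is to apply Lemma~\ref{lem:universalcharacterization}. Its hypotheses are met: $\Gamma$ has genus $g=25\geq 11$ and $\mathrm{Cliff}(\Gamma)>2$ (hence $\geq 3$) by Corollary~\ref{cor:clifford index}, one has $\alpha(\Gamma,K_\Gamma)=\mathrm{cork}(\Phi_{K_\Gamma})=3$ by \hyperlink{Table 2}{Table~2}, and $\dim Y=1+\alpha(\Gamma,K_\Gamma)=4\geq 2$, with $Y$ a maximal extension of $\Gamma$ by the preceding lemma and Theorem~\ref{thm:canonical curve max extension}. So it suffices to prove that $Y$ contains no cone over $\Gamma$ as a linear surface section, and this I would carry out exactly along the lines of the proof of Lemma~\ref{lem:1 4 5 10 max extension}. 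First I would describe the whole family of surface extensions of $\Gamma$ inside $Y\subset\bbP^{27}$: the curve $\Gamma$ is cut out on $Y$ by the three equations $t_0+f_8(\mathbf u,v,s)=t_1=g_8(\mathbf u,v,s)=0$ (besides the equation $u_0t_0+u_1t_1=v^3$ of $Y$ in $\mathbf X=\bbP(1^2,3,4,8^2)_{[\mathbf u:v:s:\mathbf t]}$), and since $\dim Y=4$ a surface extension of $\Gamma$ inside $Y$ is precisely $Y\cap V(q_1,q_2)$ for $q_1,q_2$ two independent elements of the $3$-dimensional space $\langle t_0+f_8,\,t_1,\,g_8\rangle$. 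This is the expected $\mathrm{Gr}(2,3)=\bbP^2$-family, which matches $\bbP(\coker\Phi_{K_\Gamma})$.

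Next I would show that no such $S=Y\cap V(q_1,q_2)$ is a cone. Assume it were, with vertex $p$. Since $Y$ is embedded in $\bbP^{27}$ by the very ample line bundle $\mathcal O_{\mathbf X}(8)|_Y$ (its very ampleness being proven as in Lemma~\ref{lem:1 2 6 9 extension very ample}), $S$ is swept out by lines $\ell\subset\bbP^{27}$ through $p$, each a genuine line, so $\ell\cdot\mathcal O_{\mathbf X}(8)=1$. Consider $\Lambda:=\{u_0=u_1=v=s=0\}\subset\mathbf X$, a copy of $\bbP(8,8)$ that lies on $Y$ and is embedded as a line in $\bbP^{27}$; since $12$ is divisible by $1,3$ and $4$ but not by $8$, the sheaf $\mathcal O_{\mathbf X}(12)$ is locally free exactly on $\mathbf X\setminus\Lambda$. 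If some line $\ell$ of $S$ were disjoint from $\Lambda$, then $\mathcal O_{\mathbf X}(12)$ would be Cartier along $\ell$, forcing $\ell\cdot\mathcal O_{\mathbf X}(12)\in\Z$, contradicting $\ell\cdot\mathcal O_{\mathbf X}(12)=\tfrac{12}{8}\,\ell\cdot\mathcal O_{\mathbf X}(8)=\tfrac32$. Hence every line of $S$ meets the line $\Lambda$. If $p\notin\Lambda$, all these lines would lie in the plane $\langle p,\Lambda\rangle\cong\bbP^2$, forcing $S\subseteq\bbP^2$, which is impossible since $S$ is a K3 surface. Therefore $p\in\Lambda$, i.e.\ $p=[0:0:0:0:t_0:t_1]$, and after a linear change of the coordinates $(t_0,t_1)$ by some $A\in\mathrm{GL}_2(\C)$ carrying $p$ to $p_{t_1}$, the equation of $Y$ and the forms $q_i$ transform correspondingly.

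It remains to contradict the assumption that $S$ is a cone with vertex $p_{t_1}$, which forces the homogeneous ideal $I(S)\subset\C[u_0,u_1,v,s,t_0,t_1]$ to admit a system of generators not involving $t_1$. After the substitution by $A$, this ideal is generated by the transformed degree-$9$ equation $h'$, whose $t_1$-coefficient is a nonzero linear form in $u_0,u_1$ (a column of $A$ being nonzero), together with two degree-$8$ forms spanning a $2$-plane $Q$ inside the transformed space $\langle at_0+bt_1+f_8,\,ct_0+dt_1,\,g_8\rangle$. Since the degree-$8$ graded piece of $I(S)$ is exactly $Q$, the cone condition would force $Q$ to consist of $t_1$-free forms; but then $h'$ could never be made $t_1$-free by subtracting multiples of the lower-degree (hence $t_1$-free) generators, so its $t_1$-coefficient would have to vanish, contradicting the invertibility of $A$. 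The step that genuinely requires care — handled exactly as in Lemma~\ref{lem:1 4 5 10 max extension} — is to rule out an accidental relation among the three generators that would let $I(S)$ be generated by $t_1$-free forms without $Q$ itself being $t_1$-free; this cannot occur, because an ideal generated by $t_1$-free forms has $t_1$-free graded pieces, whereas $I(S)_8=Q$. I expect this final bookkeeping with $A$ and the graded pieces of $I(S)$ to be the main obstacle; once every cone is excluded, Lemma~\ref{lem:universalcharacterization} gives that $Y\subset\bbP^{27}$ is the universal extension of $\Gamma$.
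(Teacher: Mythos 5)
Your proposal is correct and follows exactly the route the paper takes: identify the $\bbP^2$-family of surface extensions cut out by pairs of independent elements of $\langle t_0+f_8,\,t_1,\,g_8\rangle$, exclude cones by the argument of Lemma~\ref{lem:1 4 5 10 max extension}, and conclude via Lemma~\ref{lem:universalcharacterization}. The paper's own proof is only a two-sentence reference to those lemmas, and your adaptation of the cone-exclusion step (using $\mathcal O_{\mathbf X}(12)$ with non-Cartier locus $\bbP(8,8)$, forcing the vertex onto that line, then the $\mathrm{GL}_2$ coordinate-change contradiction) is precisely the intended transposition of that argument to $\mathbf X=\bbP(1^2,3,4,8^2)$.
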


The proof of this lemma follows a similar argument as the proof of \color{purple}Lemma \ref{lem:1 4 5 10 universal extension}\color{black}. From the equations of $\Gamma$ in $\mathbf X = \bbP(1^2,3,4,8^2)_{[\mathbf u:v:s:\mathbf t]}$:
$$
u_0t_0 + u_1t_1 - v^3 = t_0 + f_8(\mathbf u,v,s) = t_1 = g_8(\mathbf u,v,s) = 0,
$$
the first one of which being the equation of $Y$ in $\mathbf X$, we identify the whole family of surface extensions of $\Gamma$ in $Y$. This family is parametrized by $\bbP^2$ and it consists of all the intersections of $Y$ with the vanishing locus of any two independent linear combinations of
$$
t_0 + f_8(\mathbf u,v,s),\, t_1 \text{ and } g_8(\mathbf u,v,s).
$$
One shows by the same argument as \color{purple}Lemma \ref{lem:1 4 5 10 max extension} \color{black}that this family contains no cone, and we may conclude that $Y$ is the universal extension of $\Gamma$ by \color{purple}Lemma \ref{lem:universalcharacterization}\color{black}.

\subsection{$\bbP = \bbP(1,6,14,21)$}

By \color{purple}\hyperlink{Table 4}{Table 4} \color{black}and \color{purple}\hyperlink{Table 5}{Table 5}\color{black}, the curve $\Gamma$ is given in $\bbP(1,1,2,3)_{[\mathbf u:v:s]}$ by the following equations
$$
u_0f_6(\mathbf u,v,s) + u_1^7 = g_6(\mathbf u,v,s) = 0,
$$
with $f_6$ and $g_6$ general homogeneous polynomials of degree $6$. Adding two coordinates  $t_0$ and $t_1$ of weight $6$, consider the heptic hypersurface $Y$ in $\mathbf X = \bbP(1^2,2,3,6^2)_{[\mathbf u:v:s:\mathbf t]}$ given by
$$
u_0t_0 + u_1t_1 = u_1^7.
$$

\begin{lemma}
The variety $Y$ has a model in $\bbP^{24}$ which is a maximal extension of $\bbP$.
\end{lemma}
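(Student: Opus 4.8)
The plan is to run the argument of Lemma~\ref{lem:1 4 5 10 max extension} (case~\#9) essentially verbatim. The key feature is that $\mathbf X = \bbP(1^2,2,3,6^2)$ is such that $6$ is divisible by each of its weights, so $|\Oo_{\mathbf X}(6)|$ is basepoint-free and the morphism it induces is the $6$-Veronese embedding of $\mathbf X$; hence $\Oo_{\mathbf X}(6)$ is very ample and so is its restriction to $Y$, with no need for the inverse-morphism argument used for cases~\#10 and~\#12 in Lemma~\ref{lem:1 2 6 9 extension very ample}. Counting degree-$6$ monomials in $\mathbf u,v,s,\mathbf t$ gives $h^0(\mathbf X,\Oo_{\mathbf X}(6)) = 25$, so $|\Oo_{\mathbf X}(6)|_Y|$ realizes $Y$ inside $\bbP^{24}$, as a variety of degree $[\Oo_{\mathbf X}(7)]\cdot[\Oo_{\mathbf X}(6)]^4 = \tfrac{6^4\times 7}{6^2\times 2\times 3} = 42 = 2g-2$.

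Next I would verify the two containment properties. The linear section $Y\cap\{t_0 = -f_6(\mathbf u,v,s),\ t_1 = g_6(\mathbf u,v,s) = 0\}$ is precisely the complete intersection of a heptic and a sextic cutting $\Gamma$ out of $\bbP' = \bbP(1,1,2,3)$, and by adjunction $K_\Gamma = \Oo_{\mathbf X}(6)|_\Gamma$, so this section is the canonical model of $\Gamma$; in particular $\dim Y = 4 = 1+\alpha(\Gamma,K_\Gamma)$ by \hyperlink{Table 2}{Table~2}. By \hyperlink{Table 3}{Table~3}, $\bbP = \bbP(1,6,14,21)$ is the heptic $u_0t_0 = u_1^7$ in $\bbP(1,1,2,3,6)_{[\mathbf u:v:s:t_0]}$, which is what the equation of $Y$ becomes on setting $t_1 = 0$; hence $\bbP = Y\cap\{t_1 = 0\}$. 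Granting that $Y\subset\bbP^{24}$ is not a cone, Theorem~\ref{thm:canonical curve max extension} applies to $\Gamma$ (we have $g = 22\geq 11$ and $\mathrm{Cliff}(\Gamma)>2$ by Corollary~\ref{cor:clifford index}), so any extension of $\Gamma$ --- in particular any extension of $\bbP$ --- has dimension at most $1+\alpha(\Gamma,K_\Gamma) = \dim Y$; therefore $Y$ is a maximal extension of $\bbP$.

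The only step that is more than bookkeeping is showing that $Y\subset\bbP^{24}$ is not a cone, and I expect this to present no real difficulty: it is carried out exactly as in the proof of Lemma~\ref{lem:1 4 5 10 max extension}. Assuming $Y$ is a cone, it contains a linear surface section that is a cone over $\Gamma$; analysing the intersection numbers of a ruling line of this surface against the coordinate bundles $\Oo_{\mathbf X}(2)$ and $\Oo_{\mathbf X}(3)$ on $\mathbf X = \bbP(1,1,2,3,6,6)$ either produces an immediate contradiction or confines the vertex to the span of the weight-$6$ coordinate points $p_{t_0},p_{t_1}$. In the latter case, after moving the vertex to $p_{t_1}$ by a linear change of $t_0,t_1$, the degree-$6$ versus degree-$7$ structure of the ideal of the putative cone --- generated by two linear combinations of $t_0+f_6$, $t_1$, $g_6$ together with the heptic equation of $Y$ --- forces that coordinate change to be singular, a contradiction. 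Every ingredient is a direct transcription of the~\#9 case, and the analogous non-conicity assertions for~\#10 and~\#12 are already made in the same spirit in the preceding subsections, so I anticipate no genuine obstacle.
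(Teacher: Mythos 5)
Everything in your proposal up to the cone question is correct and coincides with the paper's proof: the very ampleness of $\Oo_{\mathbf X}(6)$ via the $6$-Veronese, the degree computation $\tfrac{6^4\times 7}{6^2\times 2\times 3}=42=2g-2$, the identification of $\Gamma$ and of $\bbP=Y\cap\{t_1=0\}$ as linear sections, and the appeal to Theorem \ref{thm:canonical curve max extension} once $Y$ is known not to be a cone.

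The gap is in the non-cone step, and it is exactly the one the paper flags in the paragraph following this lemma. Your dichotomy --- ``either an immediate contradiction or the vertex is confined to $\langle p_{t_0},p_{t_1}\rangle$'' --- is a transcription of case \#9, where the only weights are $1,2,4$ and the non-Cartier locus of $\Oo_{\mathbf X}(2)$ is precisely the span of the weight-$4$ points. Here $\mathbf X=\bbP(1^2,2,3,6^2)$ has the intermediate weights $2$ and $3$: the locus where $\Oo_{\mathbf X}(2)$ fails to be Cartier is $\{u_0=u_1=v=0\}$, which contains $p_s$, and the locus where $\Oo_{\mathbf X}(3)$ fails to be Cartier is $\{u_0=u_1=s=0\}$, which contains $p_v$. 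A ruling line $\ell$ with $\ell\cdot\Oo_{\mathbf X}(6)=1$ can therefore realize $\ell\cdot\Oo_{\mathbf X}(2)=\tfrac13$ and $\ell\cdot\Oo_{\mathbf X}(3)=\tfrac12$ by passing through these strata at points nowhere near $\langle p_{t_0},p_{t_1}\rangle$, so the vertex is not confined there and the degree-$6$/degree-$7$ ideal analysis never gets off the ground. The paper explicitly states that ``the argument used in Lemma \ref{lem:1 4 5 10 max extension} doesn't apply here'' for precisely this reason, and this is also why it refrains from claiming that $Y$ is the universal extension of $\Gamma$ in this case. The fix is to prove the weaker statement that suffices: if $Y$ were a cone, then since $\bbP=Y\cap\{t_1=0\}$ is a hyperplane section which is not a cone, $Y$ would be a cone over $\bbP$, and after a change of variables of the form $[\mathbf u:v:s:\mathbf t]\mapsto[\mathbf u:v:s:at_0+bt_1+\alpha(\mathbf u,v,s):ct_0+dt_1+\beta(\mathbf u,v,s)]$ with $ad\neq bc$ the vertex may be taken to be $p_{t_1}$; such a substitution must eliminate $t_1$ from $u_0t_0+u_1t_1=u_1^7$, which forces $bu_0+du_1\equiv 0$, i.e.\ $b=d=0$, contradicting invertibility. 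You should replace your cone argument by this one.
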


\begin{proof}
The line bundle $\mathcal O_{\mathbf X}(6)$ is very ample, since the map induced by the linear system $|\mathcal O_{\mathbf X}(6)|$ coincides with the $6$-Veronese embedding of $\mathbf X$. This model realizes $Y$ as a variety of degree $\frac{6^4\times 7}{6^2\times 3\times 2} = 42 = 2g-2$ in $\bbP^{24}$, which contains $\Gamma$ as a linear section:
$$
Y\cap \left\{ t_0 = -f_6(\mathbf u,v,s),\, t_1 = g_6(\mathbf u,v,s) = 0 \right\} = \Gamma.
$$
By \color{purple}\hyperlink{Table 2}{Table 2}\color{black}, $Y$ has dimension $4 = 1+\alpha(\Gamma,K_\Gamma)$. Hence, it is a maximal extension of $\Gamma$. It is also an extension of $\bbP$: recall from \color{purple}\hyperlink{Table 3}{Table 3} \color{black}that $\bbP$ is the hypersurface $u_0t_0 = u_1^7$ in $\bbP(1,1,2,3,6)_{[\mathbf u:v:s:t_0]}$. As a consequence, we have the equality $Y\cap \left\{ t_1 = 0 \right\} = \bbP$.

Now we assume by contradiction that $Y$ is a cone over $\bbP$. By the equality above, we may assume up to a change of variables that the vertex of this cone is the point $p_{t_1} = \left\{ \mathbf u = v = s = t_0 = 0 \right\}$. In other words, there must exist a change of variables on $\mathbf X = \bbP(1^2,2,3,6^2)_{[\mathbf u:v:s:\mathbf t]}$ of the form
$$
[\mathbf u:v:s:\mathbf t] \mapsto [\mathbf u:v:s:at_0+bt_1 + \alpha(\mathbf u,v,s):ct_0+dt_1 + \beta(\mathbf u,v,s)],
$$
where $\alpha$ and $\beta$ are homogeneous sextics in $(\mathbf u,v,s)$ and $ad \neq bc$, which eliminates the coordinate $t_1$ from the equation of $Y$: 
$$
u_0t_0 + u_1t_1 = u_1^7,
$$
i.e., makes the affine chart $Y|_{t_1 = 1}$ an affine cone. But such a change of variables does not exist. We may thus conclude that $Y$ is a maximal extension of $\bbP$ and $\Gamma$.
\end{proof}

Letting $(\lambda_0,\lambda_1)$ move in $\C^2$, we have a family 
$$
Y\cap \left\{ t_0 = \lambda_0 g_6(\mathbf u,v,s) - f_6(\mathbf u,v,s),\, t_1 = \lambda_1 g_6(\mathbf u,v,s) \right\}
$$
of K3 surfaces in $Y$ which are extensions of $\Gamma$. Those surfaces which are members of $\mathcal L$, i.e., anticanonical divisors of $\bbP$, are the ones for which $\lambda_1 = 0$.

The question arises whether there exists a cone over $\Gamma$ in $\bbP^{22}$ as a linear section of $Y$. If the answer is no, then $Y$ is the universal extension of $\Gamma$ by \color{purple}Lemma \ref{lem:universalcharacterization}\color{black}. However, the argument used in \color{purple}Lemma \ref{lem:1 4 5 10 max extension} \color{black}doesn't apply here, since a curve $\ell\subset S_\lambda$ such that $\ell\cdot [\Oo_{\bbP'}(6)] = 1$ could pass through the base points of $\Oo_{\bbP'}(2)$ and $\Oo_{\bbP'}(3)$, allowing $\ell\cdot [\Oo_{\bbP'}(2)] = \frac{1}{3}$ and $\ell\cdot [\Oo_{\bbP'}(3)] = \frac{1}{2}$.

\subsection{$\bbP = \bbP(2,3,10,15)$}

By \color{purple}\hyperlink{Table 4}{Table 4} \color{black}and \color{purple}\hyperlink{Table 5}{Table 5}\color{black}, the curve $\Gamma$ is given in $\bbP(1,2,4,5)_{[u:v:s:t]}$ by the equations
$$
vf_{10}(u,v,s,t) + s^3 = g_{10}(u,v,s,t) = 0,
$$
with $f_{10}$ and $g_{10}$ general homogeneous polynomials of degree $10$. After adding two coordinates $r_0$ and $r_1$ of weight $10$ we construct an extension of $\Gamma$ as the hypersurface 
$$
u^2r_0 + vr_1 = s^3
$$
which we denote by $Y_1$, in $\mathbf X = \bbP(1,2,4,5,10^2)_{[u:v:s:t:\mathbf r]}$.

\begin{lemma} \label{lem:2 3 10 15 nonmaximal extension 1}
The variety $Y_1$ has a model in $\bbP^{18}$ which is an extension of $\bbP$. However, it is not a maximal extension of $\Gamma$.
\end{lemma}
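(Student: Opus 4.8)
The plan is to prove the two assertions separately: the positive one — that $Y_1$ has a model in $\bbP^{18}$ which is a $1$-extension of $\bbP$ — by the template already used in the preceding subsections, and the negative one — that this model is not yet a maximal extension of $\Gamma$ — by an immediate dimension count. The only step requiring real work is the very ampleness of the relevant polarization on $Y_1$, so I would start there. On $\mathbf X = \bbP(1,2,4,5,10^2)$ the system $|\mathcal O_{\mathbf X}(10)|$ is not basepoint-free, but a divisibility check (as in the Remark of Section 2) shows that its base locus is the single coordinate point $p_s = [0:0:1:0:0:0]$, since $10$ is divisible by every weight except $4$; and $p_s \notin Y_1$ because the equation $u^2 r_0 + v r_1 - s^3$ does not vanish there. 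Hence $\mathcal O_{\mathbf X}(10)|_{Y_1}$ is globally generated, and to see it is very ample I would factor the induced map through the $10$-Veronese $v_{10}$ of $\mathbf X$, exactly as in Lemma \ref{lem:1 2 6 9 extension very ample}: the generators of $R^{(10)}$ beyond degree $1$ are the monomials carrying a power $s^k$ too large to be split into weighted-degree-$10$ factors, and on $Y_1$ the relation $s^3 = u^2 r_0 + v r_1$ rewrites each of them as a combination of products of weighted-degree-$10$ monomials (for instance $s^5 = (u^2 s^2)r_0 + (v s^2)r_1$). Thus $v_{10}(Y_1)$, and therefore $Y_1$ itself, can be recovered from its image in $\bbP^{18}$, which gives very ampleness.

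Next I would collect the numerics and the linear sections. From $0 \to \mathcal O_{\mathbf X}(-2) \to \mathcal O_{\mathbf X}(10) \to \mathcal O_{Y_1}(10) \to 0$ together with $H^0(\mathbf X, \mathcal O_{\mathbf X}(-2)) = H^1(\mathbf X, \mathcal O_{\mathbf X}(-2)) = 0$ one finds $h^0(Y_1, \mathcal O_{Y_1}(10)) = h^0(\mathbf X, \mathcal O_{\mathbf X}(10)) = 19$ (the count of weighted-degree-$10$ monomials in $u,v,s,t,r_0,r_1$), so $Y_1$ sits non-degenerately in $\bbP^{18}$ as a fourfold of degree $[\mathcal O_{\mathbf X}(12)]\cdot[\mathcal O_{\mathbf X}(10)]^4 = \tfrac{12\cdot 10^4}{1\cdot 2\cdot 4\cdot 5\cdot 10\cdot 10} = 30 = 2g-2$. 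Putting $r_0 = 0$ turns the equation of $Y_1$ into $v r_1 = s^3$, which by Table 3 is the model $\bbP(2,3,10,15) \hookrightarrow \bbP(1,2,4,5,10)$; since $-K_\bbP = v_3^* \mathcal O_{\bbP(1,2,4,5,10)}(10)$, the hyperplane section $Y_1 \cap \{r_0 = 0\}$ is the anticanonical model of $\bbP$. Likewise $\Gamma = Y_1 \cap \{r_0 = 0,\ r_1 + f_{10} = 0,\ g_{10} = 0\}$ is a codimension-$3$ linear section on which $\mathcal O_{\mathbf X}(10)$ restricts to $\mathcal O_{\bbP(1,2,4,5)}(10)|_\Gamma = K_\Gamma$, by the adjunction computation in the proof of Theorem \ref{thm:canonical very ample}. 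Finally $Y_1$ is not a cone, by the argument of Lemma \ref{lem:1 4 5 10 max extension} (a vertex could be moved to a coordinate point of $\mathbf X$, which the shape of $u^2 r_0 + v r_1 - s^3$ forbids). This establishes that $Y_1 \subset \bbP^{18}$ is a $1$-extension of $\bbP$ in its anticanonical embedding.

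For the negative assertion I would simply invoke Theorem \ref{thm:canonical curve max extension}: since $g = 16 \ge 11$ and $\mathrm{Cliff}(\Gamma) > 2$ by Corollary \ref{cor:clifford index}, every maximal extension of $\Gamma$ has dimension $1 + \alpha(\Gamma,K_\Gamma) = 1 + 4 = 5$ (Table 2), whereas $\dim Y_1 = 4$; hence $Y_1$ is not a maximal extension of $\Gamma$. The main obstacle is thus concentrated entirely in the very ampleness of $\mathcal O_{\mathbf X}(10)|_{Y_1}$, which is why it deserves the Veronese-recovery argument; everything else, and in particular the non-maximality, is immediate — and it is precisely this gap in dimension that motivates the further construction carried out in the remainder of this subsection to produce the true maximal extension of $\bbP(2,3,10,15)$.
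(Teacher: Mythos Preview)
Your proposal is correct and follows essentially the same approach as the paper: very ampleness of $\mathcal O_{\mathbf X}(10)|_{Y_1}$ via the Veronese-recovery argument of Lemma~\ref{lem:1 2 6 9 extension very ample}, the degree computation $\tfrac{12\cdot 10^4}{2\cdot 4\cdot 5\cdot 10^2}=30=2g-2$, identification of $\bbP$ and $\Gamma$ as linear sections, and the dimension count $\dim Y_1 = 4 < 1+\alpha(\Gamma,K_\Gamma)=5$ for non-maximality. You add a couple of details the paper leaves implicit (the explicit $h^0$ count and the not-a-cone check), but the structure is the same.
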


\begin{proof}
It is embedded in $\bbP^{18}$ by the restriction of the linear system $|\Oo_{\mathbf X}(10)|$; the very ampleness of $\mathcal O_\mathbf X(10)|_{Y_1}$ being proven by a similar argument as \color{purple}Lemma \ref{lem:1 2 6 9 extension very ample}\color{black}. This model has degree $\frac{10^4\times 12}{10^2\times 5\times 4\times 2} = 30 = 2g-2$ and contains $\Gamma$ as a linear section:
$$
Y_1 \cap \left\{ r_1 = -f_{10}(u,v,s,t), r_0 = g_{10}(u,v,s,t) = 0 \right\} = \Gamma.
$$
In accordance with the equation $vr_1 = s^3$ which is given for $\bbP$ in \color{purple}\hyperlink{Table 3}{Table 3} \color{black}as a hypersurface in $\bbP(1,2,4,5,10)_{[u:v:s:t:r_1]}$, one checks that $Y_1\cap \left\{ r_0 = 0\right\} = \bbP$.
However, $Y_1$ has dimension $4$, while $1+\alpha(\Gamma,K_\Gamma) = 5$, so the extension isn't maximal.
\end{proof}

The two next subsections are devoted to the construction of a maximal extension. The strategy is to introduce another birational model for $\bbP$ to construct another nonmaximal extension $Y_2$. The data of $Y_1$ and $Y_2$ will allow us to construct in \color{purple}\hyperlink{§4.5.2}{§4.5.2} \color{black}a maximal extension of $\Gamma$ and $\bbP$.

\subsubsection{An alternative model for $\bbP = \bbP(2,3,10,15)$}

Here, we construct $Y_2$. Introducing homogeneous coordinates $[u':v':s':t']$ on the weighted projective space $\bbP(1,3,5,9)$, consider the following rational map $\psi$ from $\bbP$ to $\bbP(1,3,5,9)$
$$
\psi : [x:y:z:w]\in \bbP \mapsto [u':v':s':t'] = [x:y^2:z:yw]\in \bbP(1,3,5,9).
$$
The expression of $\psi$ in homogeneous coordinates is obtained from the $2$-Veronese map $v_2$ on $\bbP$,
$$
[x:y:z:w] \mapsto [x:y^2:z:yw:w^2],
$$
by removing the last component $w^2$. This is a similar construction as the one for $\varphi$ displayed in \color{purple}\hyperlink{Table 4}{Table 4}\color{black}, which was obtained from the $3$-Veronese map $v_3$.

\begin{lemma}
The map $\psi$ is birational and it restricts to an isomorphism on the general anticanonical divisor of $\bbP$.
\end{lemma}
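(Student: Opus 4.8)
The plan is to recognise $\psi$ as an instance of the general construction of \hyperlink{section 3}{Section 3} and then to argue exactly as in \color{purple}Lemma \ref{lem:example is birational}\color{black}. First I would record that the $2$-Veronese map realises $\bbP$ as the $18$-ic hypersurface $v'r' = (t')^2$ in $X := \bbP(1,3,5,9,15)_{[u':v':s':t':r']}$, with $v_2^*\Oo_X(15) = -K_\bbP$; since $15$ is the top weight of $X$, the rational map $X \dashrightarrow \bbP^{g+1}$ attached to $|\Oo_X(15)|$ contracts $X$ to a cone whose vertex is the image of $p_{r'} = v_2(p_w)$, and $\psi$ is precisely the restriction to $\bbP$ of the projection of that cone from its vertex, targeting $\bbP' = \bbP(1,3,5,9)$ (the base of the cone). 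In particular $\psi$ is undefined exactly along $\{x = y = z = 0\} = \{p_w\}$, a single torus-fixed point.

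Granting this description, the second assertion is essentially free: all of $2,3,10,15$ divide $\sigma = 30$, so $|-K_\bbP| = |\Oo_\bbP(30)|$ is basepoint-free by the divisibility criterion of the \hyperlink{Remark}{Remark} in Section 2, hence a general $S \in |-K_\bbP|$ -- being a hyperplane section of $\bbP \subset \bbP^{g+1}$ -- avoids $p_w$. Therefore the projection from the vertex restricts to an isomorphism on $S$, whose image is a K3 surface, i.e. a (nongeneral) anticanonical divisor of $\bbP'$.

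For the birationality I would reproduce the toric argument of \color{purple}Lemma \ref{lem:example is birational}\color{black}: build the fan of $\bbP = \bbP(2,3,10,15)$ via the algorithm of \cite[Proposition 2.8]{RT}, resolve the indeterminacy of $\psi$ by a weighted blow-up $\varepsilon_1 : \widehat{\bbP} \to \bbP$ at $p_w$ -- i.e. by inserting a single new ray in the cone over $p_w$ -- and then observe that the same fan $\Sigma_{\widehat{\bbP}}$ is also obtained from $\Sigma_{\bbP'}$ by a weighted blow-up $\varepsilon_2 : \widehat{\bbP} \to \bbP'$ along a toric curve, so that $\psi\circ\varepsilon_1 = \varepsilon_2$ after substituting the homogeneous expressions. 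Alternatively, and more cheaply, one can verify birationality through a single lattice computation: $\psi$ restricts to a dominant morphism of dense tori whose comorphism on character lattices sends $(n_0,n_1,n_2,n_3)\in M_{\bbP'} = \{n_0+3n_1+5n_2+9n_3 = 0\}$ to $(n_0,\,2n_1+n_3,\,n_2,\,n_3)\in M_{\bbP} = \{2m_0+3m_1+10m_2+15m_3 = 0\}$, and this carries the basis $(-3,1,0,0),\ (-5,0,1,0),\ (-9,0,0,1)$ of $M_{\bbP'}$ onto the basis $(-3,2,0,0),\ (-5,0,1,0),\ (-9,1,0,1)$ of $M_{\bbP}$; being a lattice isomorphism, it forces $\psi$ to have degree $1$.

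The genuinely new content compared with \color{purple}Lemma \ref{lem:example is birational}\color{black} is only bookkeeping, not geometry: since no weight of $\bbP(2,3,10,15)$ equals $1$, the fan rays are no longer the standard basis and the weights of the blow-up $\varepsilon_1$ have to be extracted carefully from \cite[Proposition 2.8]{RT}. If instead one adopts the character-lattice route, even that difficulty disappears and all that remains is the elementary basis check above; I do not anticipate any conceptual obstacle.
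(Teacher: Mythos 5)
Your proposal is correct, and in its primary form it is exactly the paper's route: the paper does not detail the proof either, but reduces it to the same commutative diagram (the $2$-Veronese realizing $\bbP$ as the $18$-ic $v'r'=t'^2$ in $\bbP(1,3,5,9,15)$, whose image under $|\Oo(15)|$ is a cone with vertex $v_2(p_w)$ over $\bbP(1,3,5,9)$, with $\psi$ the projection from the vertex) together with a resolution of the single indeterminacy point $p_w$ modelled on Lemma \ref{lem:example is birational}; your argument for the second assertion (base-point-freeness of $|\Oo_\bbP(30)|$, so the general $S$ lies in a hyperplane of $\bbP^{17}$ missing the vertex, on which the projection is a linear isomorphism) is likewise the paper's. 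Your character-lattice shortcut is a genuinely different, and cleaner, way to get birationality: the comorphism $(n_0,n_1,n_2,n_3)\mapsto(n_0,2n_1+n_3,n_2,n_3)$ is as you state, and the images $(-3,2,0,0)$, $(-5,0,1,0)$, $(-9,1,0,1)$ do generate all of $\ker(2,3,10,15)$ --- the only point to verify is that $m_1-m_3$ is even for every $(m_0,m_1,m_2,m_3)$ in that kernel, which follows by reducing $2m_0+3m_1+10m_2+15m_3=0$ modulo $2$ --- so $\psi$ restricts to an isomorphism of the dense tori and has degree $1$. What this buys is that you avoid the bookkeeping of the Rossi--Terracini fan and the explicit weighted blow-ups entirely; what you lose is the explicit resolution $\widehat{\bbP}$, which the paper's approach produces as a byproduct and which is what lets one see concretely which loci are contracted (e.g.\ that the pullback of $\psi(S)$ is $S+(y^2)$, used just afterwards). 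Either way the lemma is established.
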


The proof that $\psi$ is birational, which we shall not detail here, consists in a resolution of the indeterminacy point of $\psi$, as was done in the proof of \color{purple}Lemma \ref{lem:example is birational}\color{black}. A similar argument applies to $\psi$ as the one which was given for $\varphi : \bbP\dashrightarrow \bbP(1,2,4,5)$ which ensured that $\varphi(S) \simeq S$ for a general $S\in |-K_\bbP|$. It revolves around the following commutative diagram. 

\begin{center}
\begin{tikzcd}
\bbP \arrow[d,hookrightarrow,"v_2"] \arrow[drrr,dashrightarrow,"\psi"] & & & \\
\bbP(1,3,5,9,15) \arrow[d,dashrightarrow,"|\Oo(15)|"] & & & \bbP(1,3,5,9) \arrow[d,dashrightarrow,"|\Oo(15)|"] \\
\mathbf W = \mathrm{cone}(\mathbf V) \arrow[rrr,dashrightarrow,"\mathrm{pr}"] & & & \mathbf V  
\end{tikzcd}
\end{center}
Here, $v_2$ is the $2$-Veronese map from $\bbP$ to $\bbP(1,3,5,9,15)$, $-K_\bbP = v_2^*\Oo_{\bbP(1,3,5,9,15)}(15)$, $\mathbf W$ is a cone over $\mathbf V$ with a point as vertex and $\mathrm{pr}$ is the projection map from the vertex point of $\mathbf W$ onto $\mathbf V$.

A consequence of this is that $S$ can be realized as a nongeneral anticanonical divisor of $\bbP(1,3,5,9)$, namely, one of equation
$$
v'f_{15}(u',v',s',t') + t'^2 = 0
$$
where $f_{15}$ is a homogeneous polynomial of degree $15$. One checks that the pullback to $\bbP$ of such a hypersurface is $S + (y^2)$, and the locus $y = 0$ is contracted by $\psi$.

In $\bbP(1,3,5,9)$, the curve $\Gamma$ is cut out on $S$ by a general hypersurface of degree $15$, as the diagram above shows. Hence $\Gamma$ is given in $\bbP(1,3,5,9)$ by the following equations
$$
v'f_{15}(u',v',s',t') + t'^2 = g_{15}(u',v',s',t') = 0
$$
where $g_{15}$ is a general homogeneous polynomial of degree $15$. Let's add two coordinates $r_0'$ and $r_1'$ of weight $15$ and examine the hypersurface $Y_2$ in $\mathbf X' = \bbP(1,3,5,9,15^2)$ given by the equation
$$
u'^3r_0' + v'r_1' = t'^2.
$$

\begin{lemma} \label{lem:2 3 10 15 nonmaximal extension 2}
The variety $Y_2$ has a model in $\bbP^{18}$ which is also a maximal extension of $\bbP$, although a nonmaximal extension of $\Gamma$.
\end{lemma}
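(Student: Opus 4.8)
The plan is to repeat, essentially verbatim, the argument used for $Y_1$ in Lemma~\ref{lem:2 3 10 15 nonmaximal extension 1}, the one genuinely new input being the very ampleness of the polarising bundle, which is supplied by the Veronese-recovery technique of Lemma~\ref{lem:1 2 6 9 extension very ample}. Recall that $Y_2 = \{\, u'^3 r_0' + v' r_1' = t'^2 \,\}$ is an $18$-ic hypersurface in $\mathbf X' = \bbP(1,3,5,9,15^2)_{[u':v':s':t':\mathbf r]}$; I would polarise it by $\mathcal O_{\mathbf X'}(15)|_{Y_2}$.

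\emph{Very ampleness.} On $\mathbf X'$ the linear system $|\mathcal O_{\mathbf X'}(15)|$ is not basepoint-free, since $15$ is not divisible by the weight $9$; its only base point is $p_{t'}$, and $p_{t'}\notin Y_2$, so $\mathcal O_{\mathbf X'}(15)|_{Y_2}$ is globally generated. I would factor the induced rational map through the $15$-Veronese $v_{15}$ of $\mathbf X'$ followed by the projection away from $v_{15}(p_{t'})$, exactly as in Lemma~\ref{lem:1 2 6 9 extension very ample}. The Veronese coordinates of $v_{15}(\mathbf X')$ that are not already degree-$15$ forms in $u',v',s',t',r_0',r_1'$ all carry an odd power of $t'$ (typical such coordinates, up to the degree-$15$ factors $t'u'^6$, $t'u'^3v'$, $t'v'^2$, are $t'^3u'^3$, $t'^3v'$, $t'^5$). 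On $Y_2$ one has $t'^{2k+1} = t'(u'^3r_0'+v'r_1')^k$, so expanding rewrites each such coordinate as a polynomial in the degree-$15$ forms; for instance $t'^3u'^3 = (t'u'^6)r_0' + (t'u'^3v')r_1'$ and $t'^5 = (t'u'^6)r_0'^2 + 2(t'u'^3v')r_0'r_1' + (t'v'^2)r_1'^2$. Hence $v_{15}(Y_2)$ is recovered from the image of $Y_2$ under the degree-$15$ forms, which provides the inverse morphism. Finally $h^0(\mathbf X',\mathcal O_{\mathbf X'}(15)) = 19$, and since the equation of $Y_2$ has degree $18>15$ no section restricts to zero on $Y_2$; thus $\mathcal O_{\mathbf X'}(15)|_{Y_2}$ embeds $Y_2$ as a nondegenerate $4$-fold in $\bbP^{18}$.

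\emph{$\bbP$ and $\Gamma$ as linear sections; not a cone.} A direct computation gives
\[
\deg\bigl(Y_2\subset\bbP^{18}\bigr)=\mathcal O_{\mathbf X'}(18)\cdot\mathcal O_{\mathbf X'}(15)^4=\frac{18\cdot 15^4}{3\cdot 5\cdot 9\cdot 15\cdot 15}=30=2g-2,
\]
with $g=g(\Gamma)=16$ from Table~2. Next,
\[
Y_2\cap\bigl\{\,r_0'=0,\ r_1'=-f_{15}(u',v',s',t'),\ g_{15}(u',v',s',t')=0\,\bigr\}=\Gamma,
\]
a codimension-$3$ linear section, and adjunction gives $K_\Gamma=\mathcal O_{\mathbf X'}(15)|_\Gamma$, so this is the canonical model $\Gamma\subset\bbP^{15}$. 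Setting only $r_0'=0$ recovers the $\psi$-model of $\bbP$, namely the hypersurface $\{\,v'r_1'=t'^2\,\}$ in $\bbP(1,3,5,9,15)_{[u':v':s':t':r_1']}$, so $\bbP=Y_2\cap\{r_0'=0\}$ is a hyperplane section. That $Y_2\subset\bbP^{18}$ is not a cone I would check as in Lemma~\ref{lem:1 4 5 10 max extension}: a vertex would be a coordinate point, and no linear substitution among the two weight-$15$ coordinates $r_0',r_1'$ (possibly absorbing degree-$15$ forms in $u',v',s',t'$) can remove a coordinate from $u'^3r_0'+v'r_1'-t'^2$. Hence $Y_2$ is an extension of $\bbP$. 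Finally $\dim Y_2 = 4$, while $1+\alpha(\Gamma,K_\Gamma)=5$ by Table~2, so by Theorem~\ref{thm:canonical curve max extension} $Y_2$ is not a maximal extension of $\Gamma$.

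The main obstacle is the very ampleness step: in contrast with $\mathcal O_{\mathbf X}(10)$ for $Y_1$, here $|\mathcal O_{\mathbf X'}(15)|$ genuinely has a base point off $Y_2$, so the bookkeeping that recovers all Veronese coordinates from the degree-$15$ forms, and hence produces the inverse morphism to the map induced by $|\mathcal O_{\mathbf X'}(15)|_{Y_2}|$, must be carried out with care, following Lemma~\ref{lem:1 2 6 9 extension very ample}. The not-a-cone verification is a secondary subtlety, precisely because $\mathbf X'$ carries two coordinates of the top weight $15$.
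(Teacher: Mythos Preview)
Your proposal is correct and follows essentially the same approach as the paper: very ampleness of $\mathcal O_{\mathbf X'}(15)|_{Y_2}$ via the Veronese-recovery technique of Lemma~\ref{lem:1 2 6 9 extension very ample}, identification of $\Gamma$ and $\bbP$ as linear sections (the paper writes the $\Gamma$ section with free parameters $\lambda_0,\lambda_1$, of which your choice $\lambda_0=\lambda_1=0$ is a special case), and the dimension comparison $\dim Y_2=4<5=1+\alpha(\Gamma,K_\Gamma)$. You in fact supply more detail than the paper does---the explicit degree computation, the list of higher-weight Veronese coordinates and their rewriting on $Y_2$, and the not-a-cone check---whereas the paper simply cites Lemma~\ref{lem:1 2 6 9 extension very ample} for the first and omits the others; note also that neither your argument nor the paper's actually establishes the word ``maximal'' in ``maximal extension of $\bbP$'' (and indeed the later construction of the $5$-fold $Y\supset\bbP$ shows this word is a slip in the statement).
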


\begin{proof}
It has dimension $4$ and is embedded in $\bbP^{18}$ by restriction of $|\Oo_{\mathbf X'}(15)|$; the very ampleness of $\mathcal O_{\mathbf X'}(15)|_{Y_2}$ being proven by a similar argument as \color{purple}Lemma \ref{lem:1 2 6 9 extension very ample}\color{black}. This model contains $\Gamma$ as a linear section; indeed, given two constants $\lambda_0$ and $\lambda_1$ : 
$$
Y_2\cap \left\{ \begin{array}{l}
r_0' = \lambda_0 g_{15}(u',v',s',t') \\ r_1' = \lambda_1 g_{15}(u',v',s',t') - f_{15}(u',v',s',t') \\ g_{15}(u',v',s',t') = 0
\end{array} \right\} = \Gamma.
$$
This is an extension of $\bbP$ as well. Indeed, we have $Y_2 \cap \left\{ r_0' = 0 \right\} = \left\{ v'r_1' = t'^2 \right\} = \bbP$ in $\bbP(1,3,5,9,15)$.
\end{proof}

\subsubsection{The maximal extension of $\bbP(2,3,10,15)$} \hypertarget{§4.5.2}{}

In the preceding subsections we constructed $Y_1$ and $Y_2$ two fourfold extensions of $\Gamma$. Now we construct a maximal extension $Y$ of $\bbP$, such that $Y$ contains both $Y_1$ and $Y_2$ as hyperplane sections in $\bbP^{19}$. This construction involves a weighted projective bundle over $\bbP^1$, i.e., a quotient of a vector bundle such that the fiber is a weighted projective space.

Let $\Lambda = \bbP^{17}$ be the linear subspace spanned by $\bbP$ in $\bbP^{19}$; for $i=1,2$ we have $Y_i = Y\cap H_i$ where $H_i$ is a hyperplane in $\bbP^{19}$ such that $\Lambda \subset H_i$. The fourfolds $Y_1$ and $Y_2$ generate a pencil of hyperplane sections of $Y$ which all contain $\bbP$.

The construction of $Y$ will require a realization of $Y_1$ and $Y_2$ as complete intersections in $\bbP(1^2,2,3,5^3)$. Note that the image of the $6$-Veronese map $v_6$ on $\bbP(2,3,10,15)$ lies in $\bbP(1^2,2,3,5^2)$, so one might think that it could be possible to recover $Y_1$ and $Y_2$ from $v_6$. However, all my attempts in trying so were unsuccessful.

On the one hand, $Y_1$ is given as the $12$-ic hypersurface in $\mathbf X$ of equation $u^2r_0 + vr_1 = s^3$. On the other hand, $Y_2$ is the $18$-ic hypersurface in $\mathbf X'$ given by the equation $u'^3r_0' + v'r_1' = t'^2$. Both $\mathbf X$ and $\mathbf X'$ can be embedded in $\bbP(1^2,2,3,5^3)$ by the following Veronese maps.
$$
\fct{(v_2)_{\mathbf X}}{\mathbf X=\bbP(1,2,4,5,10^2)}{\bbP(1^2,2,3,5^3)}{[u:v:s:t:r_0:r_1]}{[u^2:v:s:ut:t^2:r_1:r_0].}
$$
$$
\fct{(v_3)_{\mathbf X'}}{\mathbf X' = \bbP(1,3,5,9,15^2)}{\bbP(1^2,2,3,5^3)}{[u':v':s':t':r_0':r_1']}{[v':u'^3:u's':t':r_1':s'^3:r_0'].}
$$
We may choose $[U_0:U_1:V:W:X_0:X_1:X_2]$ as homogeneous coordinates on $\bbP(1^2,2,3,5^3)$, whose pullbacks by the Veronese maps are
$$
\begin{array}{c|ccccccc}
& U_0 & U_1 & V & W & X_0 & X_1 & X_2 \\
\hline
\text{pullback to } \mathbf X & u^2 & v & s & ut & t^2 & r_1 & r_0 \\
\text{pullback to } \mathbf X' & v' & u'^3 & u's' & t' & r_1' & s'^3 & r_0'
\end{array}
$$
Hence the above realizes $\mathbf X$ (respectively $\mathbf X'$) as the hypersurface of equation $U_0X_0 = W^2$ (respectively $U_1X_1 = V^3$). The descriptions we know for $Y_1$ and $Y_2$ in $\mathbf X$ and $\mathbf X'$ yield 
$$
Y_1 = \left\{ \begin{array}{l}
U_0X_0 = W^2 \\ U_1X_1 + U_0X_2 = V^3 
\end{array}
\right\}
$$
and
$$
Y_2 = \left\{ \begin{array}{l}
U_0X_0 + U_1X_2 = W^2 \\ U_1X_1 = V^3
\end{array}
\right\}.
$$
Besides, we know from \color{purple}Lemmas \ref{lem:2 3 10 15 nonmaximal extension 1} \color{black}and \color{purple}\ref{lem:2 3 10 15 nonmaximal extension 2} \color{black}that $\bbP$ is cut out on $Y_1$ and $Y_2$ by the same hyperplane in $\bbP^{18}$, namely: $Y_1 \cap \left\{X_2 = 0\right\} = Y_2 \cap \left\{ X_2 = 0 \right\} = \bbP$. In particular, $\bbP$ in $\left\{ X_2 = 0 \right\}$ is given by the equations 
\begin{equation} \label{eq:2 3 10 15 model}
U_0X_0 = W^2,\, U_1X_1 = V^3.
\end{equation}
We introduce now two coordinates $\lambda,\mu$ and consider $F = \mathrm{Proj}(R)$ with 
$$
R = \C[\lambda,\mu,U_0,U_1,V,W,X_0,X_1,X_2]
$$
endowed with the following grading in $\Z^2$:
$$
\begin{array}{c|ccccccccc}
 & \lambda & \mu & U_0 & U_1 & V & W & X_0 & X_1 & X_2 \\
\hline
\text{degree} & 1 & 1 & 0 & 0 & 0 & 0 & 0 & 0 & -1 \\
\text{in } \Z^2: & 0 & 0 & 1 & 1 & 2 & 3 & 5 & 5 & 5
\end{array}
$$
It is a bundle over $\bbP^1$ with fiber $\bbP(1^2,2,3,5^3)$, whose bundle map to $\bbP^1$ is $[\lambda : \mu]$ and the locus $X_2 = 0$ is the trivial subbundle $\bbP^1 \times \bbP(1^2,2,3,5^2)$.

There is a morphism $\phi : F \to \bbP(1^2,2,3,5^4)$ which is given in coordinates by the expression $[U_0:U_1:V:W:X_0:X_1:\lambda X_2:\mu X_2]$ and the projective model in $\bbP^{19}$ induced by the linear system $|\Oo_F(0,5)|$ decomposes as the composite map

\begin{center}
\begin{tikzcd}
F \arrow[rr,"\phi"] & & \bbP(1^2,2,3,5^4) \arrow[rr,dashrightarrow,"|\Oo(5)|"] & & \bbP^{19}.
\end{tikzcd}
\end{center}

Notice that $\phi$ contracts the trivial bundle $\bbP^1 \times \bbP(1^2,2,3,5^2)$ given by the equation $X_2 = 0$ onto $\bbP(1^2,2,3,5^2)$. Hence the image of $\left\{ X_2 = 0 \right\}$ by $|\Oo_F(0,5)|$ is the image of $\bbP(1^2,2,3,5^2)$ in $\bbP^{17}$.

Consider the complete intersection $Z$ in $F$ given by the two homogeneous equations 
\begin{align*}
U_0X_0 + \lambda U_1X_2 & = W^2, \\
U_1X_1 + \mu U_0X_2 & = V^3.
\end{align*}

\begin{lemma}\label{lem:2 3 10 15 max extension}
The image of $Z$ in $\bbP^{19}$ is not a cone and contains $Y_1$ and $Y_2$ as hyperplane sections. By \color{purple}\hyperlink{Table 2}{Table 2}\color{black}, it has dimension $1+\alpha(\Gamma,K_\Gamma)$, and thus it is a maximal extension of $\bbP$.
\end{lemma}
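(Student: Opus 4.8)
The plan is to establish the three assertions separately: that the $\bbP^{19}$-model of $Z$ contains $Y_1$ and $Y_2$ as hyperplane sections, that it has dimension $5$, and that it is not a cone — the last point being the heart of the matter.

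First I would settle the dimension. The variety $F$ is a $7$-dimensional toric variety, and $Z$ is the complete intersection of two divisors of bidegree $(0,6)$ in $F$, so $\dim Z = 5$. The rational map $Z \to \bbP^{19}$ induced by $|\Oo_F(0,5)|$, which factors through $\phi$ as explained before the statement, contracts only the divisor $Z \cap \{X_2 = 0\} \simeq \bbP^1 \times \bbP$ (onto $\bbP$) and is birational onto its image elsewhere; hence the image $\bar Z \subset \bbP^{19}$ is again $5$-dimensional, and $5 = 1 + \alpha(\Gamma,K_\Gamma)$ by Table~2.

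Next I would pull back the two coordinate hyperplanes of $\bbP^{19}$ coming from the last pair of weight-$5$ coordinates of $\bbP(1^2,2,3,5^4)$. The coordinate which is the image of $\lambda X_2$ pulls back under $|\Oo(5)|$ and $\phi$ to the form $\lambda X_2$ on $F$, so the hyperplane "$\lambda X_2 = 0$" has preimage $(Z \cap \{\lambda = 0\}) \cup (Z \cap \{X_2 = 0\})$ in $Z$. The first piece is the fibre of $F \to \bbP^1$ over $[0:1]$ intersected with $Z$: setting $\mu = 1$ there, $Z$ cuts out $\{U_0X_0 = W^2,\ U_1X_1 + U_0X_2 = V^3\}$, which is exactly $Y_1$, while the second piece maps onto $\bbP \subset Y_1$. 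Hence $\bar Z \cap \{\lambda X_2 = 0\} = Y_1$, and symmetrically $\bar Z \cap \{\mu X_2 = 0\} = Y_2$; intersecting both gives $\bar Z \cap \Lambda = \bbP$, where $\Lambda = \bbP^{17}$ is the span of $\bbP$. Cutting $\bbP$ with two further general hyperplanes of $\bbP^{19}$ recovers the canonical model $\Gamma \subset \bbP^{15}$, on which $\Oo_{\bbP^{19}}(1)$ restricts to $K_\Gamma$ by adjunction; so $\bar Z$ is a linear-section extension of $\Gamma$ (and of $\bbP$, via $\bbP = \bar Z \cap \Lambda$), provided it is not a cone.

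The main obstacle is precisely showing $\bar Z$ is not a cone. As in the proof of Lemma~\ref{lem:1 4 5 10 max extension}, were $\bar Z \subset \bbP^{19}$ a cone it would admit a $\bbP^{16}$-linear section equal to the cone over $\Gamma$ with the apex as vertex; so it suffices to describe the full $\bbP^3$-family $\bbP(\coker \Phi_{K_\Gamma})$ of surface extensions of $\Gamma$ inside $\bar Z$ and to check that none is such a cone. Concretely, $\Gamma$ is cut out of $\bar Z$ by the two defining equations of $Z$ together with four hyperplanes — the two coordinate hyperplanes $\{\lambda X_2 = 0\}$ and $\{\mu X_2 = 0\}$ whose joint trace is $\bbP$, and two further hyperplanes cutting $\Gamma$ out of $\bbP$ — and the surface extensions correspond to replacing these four hyperplanes by three independent linear combinations of them. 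After the $\bbP^1$-bundle bookkeeping this gives an explicit family of codimension-$3$ linear sections of $\bar Z$, and for each member I would argue exactly as in Lemma~\ref{lem:1 4 5 10 max extension}: a cone over $\Gamma$ would contain a line $\ell$ with $\ell \cdot \Oo(5) = 1$ and $\ell \cdot \Oo(2)$ or $\ell \cdot \Oo(3)$ fractional, which a short case analysis on whether $\ell$ meets the loci where these bundles on the ambient weighted space fail to be locally free rules out. I expect transporting that intersection-number argument through the coordinates of the bundle $F$ — and keeping track of which members of the family meet the contracted locus $\bbP^1 \times \bbP$ — to be the most delicate point; an alternative would be to show directly that the vertex of any cone structure on $\bar Z$ is forced into $\bbP = \bar Z \cap \Lambda$, using that $Y_1$ and $Y_2$, being extensions of $\bbP$ by Lemmas~\ref{lem:2 3 10 15 nonmaximal extension 1} and~\ref{lem:2 3 10 15 nonmaximal extension 2}, are not cones, and then to contradict the fact that the weighted projective space $\bbP$ is not a cone.

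Finally, once $\bar Z$ is known not to be a cone, it is an extension of $\Gamma$ of dimension $1 + \alpha(\Gamma,K_\Gamma)$, so Theorem~\ref{thm:canonical curve max extension} forces it to be a maximal extension of $\Gamma$, hence of $\bbP$; one may moreover remark that the pencil $\langle Y_1, Y_2 \rangle$ together with the subfamily $\mathcal L$ inside $\bbP$ already exhibits enough surface extensions of $\Gamma$ in $\bar Z$ to identify it with the universal extension, although only maximality is asserted in the statement.
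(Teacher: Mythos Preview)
Your treatment of the dimension count and of $Y_1,\,Y_2$ arising as the hyperplane sections $\{\lambda X_2=0\}$ and $\{\mu X_2=0\}$ is exactly what the paper does.

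For the ``not a cone'' step, however, the paper takes a different route than either of your two suggestions, and your main suggestion is risky. Your plan is to descend to the $\bbP^3$ of \emph{surface} extensions of $\Gamma$ and rule out cones via the fractional intersection-number trick of Lemma~\ref{lem:1 4 5 10 max extension}. But the paper itself remarks, right after the $\bbP(1,6,14,21)$ case, that this trick fails once the putative line $\ell$ can pass through the base loci of $\Oo(2)$ and $\Oo(3)$; in the ambient $\bbP(1^2,2,3,5^4)$ here, $\ell\cdot\Oo(5)=1$ forces $\ell\cdot\Oo(2)=\tfrac{2}{5}$ and $\ell\cdot\Oo(3)=\tfrac{3}{5}$, and nothing a priori prevents $\ell$ from meeting those loci. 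So this line of argument is at best incomplete. Your alternative (``force the vertex into $\bbP$ using that $Y_1,Y_2$ are not cones'') is also not quite right as stated: if the vertex $v$ is a point with $v\notin H_1$ and $v\notin H_2$, then neither $Y_1$ nor $Y_2$ need be a cone, so knowing they are not cones does not pin $v$ down.

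What the paper actually does is stay at the level of \emph{fourfold} sections. The pencil $\{Y_{(\lambda,\mu)}\}$ of hyperplane sections of $Y$ through $\bbP$ is written explicitly as complete intersections in $\bbP(1^2,2,3,5^3)$, and one observes $Y_{(\lambda,\mu)}\simeq Y_{(\alpha\lambda,\beta\mu)}$ for all $\alpha,\beta\in\C^*$, so there are at most three isomorphism classes, with the $Y_{(1,1)}$ class Zariski-dense in the pencil. If $Y$ were a cone, its vertex cannot lie in $\Lambda=\bbP^{17}$ (else $\bbP$ would be a cone), hence exactly one member of the pencil contains the vertex and is a cone over $\bbP$; by density this member must be $Y_{(1,0)}$ or $Y_{(0,1)}$. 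The paper then finishes with an explicit elimination: any change of variables $[\mathbf U:V:W:\mathbf X]\mapsto[A\mathbf U:V:W:M\mathbf X]$ removing $X_2$ from the equations $U_0X_0+U_1X_2=W^2,\ U_1X_1=V^3$ forces $\det A=0$ or $\det M=0$, a contradiction. This coordinate computation is what replaces your intersection-number argument, and it is the key idea you are missing.
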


\begin{proof}
The restriction of $Z$ to $\left\{ X_2 = 0\right\}$ is the complete intersection in  $\bbP^1 \times \bbP(1^2,2,3,5^2)$ yielded by the equations $U_0X_0 = W^2$ and $U_1X_1 = V^3$. These are the defining equations for $\bbP$ in $\bbP(1^2,2,3,5^2)$ as mentioned in \color{purple}(\ref{eq:2 3 10 15 model})\color{black}, hence 
$$
Z\cap \left\{ X_2 = 0\right\} = \bbP^1 \times \bbP
$$
and it is contracted by $\phi$ to $\bbP$.

Let $Y$ be the image of $Z$ in $\bbP^{19}$. Let us show that it contains $Y_1$ and $Y_2$ as hyperplane sections. On the one hand, $\left\{ \lambda X_2 = 0\right\}$ is the pullback to $F$ of a hyperplane in $\bbP^{19}$, such that
$$
Z \cap \left\{ \lambda X_2 = 0 \right\} = Z|_{\lambda = 0} + Z|_{X_2 = 0}.
$$
In the above, $Z|_{X_2 = 0}$ is contracted onto $\bbP$, and $Z|_{\lambda = 0}$ has image $Y_1$. 

On the other hand,
$$
Z \cap \left\{ \mu X_2 = 0 \right\} = Z|_{\mu = 0} + Z|_{X_2 = 0}
$$
where once again, $Z|_{X_2 = 0}$ is contracted onto $\bbP$, and $Z|_{\mu = 0}$ has image $Y_2$.

It remains to be proven that $Y$ is not a cone. The pencil of fourfold extensions of $\bbP$ contained in $Y$ consists of all the $Y\cap H$, where $H\subset \bbP^{19}$ is a hyperplane such that $\bbP \subset H$. These fourfolds are each cut out on $Y$ by $\ell(\lambda,\mu)X_2 = 0$, with $\ell$ a linear form. Hence, they are complete intersections in $\bbP(1^2,2,3,5^3)$ of the form
\begin{align*}
U_0X_0 + \lambda U_1X_2 & = W^2, \\
U_1X_1 + \mu U_0X_2 & = V^3
\end{align*}
where $\lambda$ and $\mu$ are fixed constant coefficients (to be precise, solutions to $\ell(\lambda,\mu) = 0$). Let $Y_{(\lambda,\mu)}$ be the fourfold section of $Y$ given by the equations above, so that $Y_1 = Y_{(0,1)}$ and $Y_2 = Y_{(1,0)}$. We first notice that $Y_{(\lambda,\mu)} \simeq Y_{(\alpha\lambda,\beta\mu)}$ for all $\alpha,\beta \in \C^*$; indeed, the automorphism which consists in the change of variables $U_1 \mapsto \alpha U_1, U_0 \mapsto \beta U_0, X_1 \mapsto \frac{1}{\alpha}X_1$ and $X_0 \mapsto \frac{1}{\beta}X_0$ identifies $Y_{(\alpha\lambda,\beta\mu)}$ with $Y_{(\lambda,\mu)}$. Therefore, among the $Y_{(\lambda,\mu)}$ there are at most three isomorphism classes: $Y_{(1,0)},Y_{(0,1)}$ and $Y_{(1,1)}$. In particular, the class represented by $Y_{(1,1)}$ is dense in the pencil $\left\{ Y\cap H \: | \: \bbP \subset H\right\}$.

Assume now by contradiction that $Y$ is a cone. It contains $\bbP$ as a linear section of codimension $2$, and $\bbP$ is not a cone, so there are only two possible cases: either the vertex of $Y$ is a point, or it is a line. In the latter case, all the $Y_{(\lambda,\mu)}$'s are cones over $\bbP$ with each time a point as vertex; in the former case, there is a unique member $Y_{(\lambda,\mu)}$ which is a cone over $\bbP$. This unique member is either $Y_{(1,0)}$ or $Y_{(0,1)}$ since the class of $Y_{(1,1)}$ is dense in the pencil, so without loss of generality we may assume that $Y_{(1,0)}$ is a cone over $\bbP$ with a point as vertex (the rest of the proof is analogous if the cone is $Y_{(0,1)}$).

Let us recall the equations for $Y_{(1,0)}$ in $\bbP(1^2,2,3,5^3)$ with respect to the coordinates $[U_0:U_1:V:W:X_0:X_1:X_2]$.
\begin{align*}
U_0X_0 + U_1X_2 & = W^2, \\
U_1X_1 & = V^3.
\end{align*}
We recall as well the fact that $\bbP$ is the hyperplane section $Y_{(1,0)}\cap \left\{ X_2 = 0 \right\}$ in $\bbP^{18}$. There is a change of variable which fixes the hyperplane $\left\{ X_2 = 0 \right\}$ and moves the vertex point to $p_{X_2}$. This change of variables makes the affine chart $Y_{(1,0)}|_{X_2 = 1}$ an affine cone, i.e., it eliminates the variable $X_2$ from the equations above. 

Indeed, let $F = G = 0$ be the defining equations of a cone whose vertex point is $p_{X_2}$, such that $F$ and $G$ are two homogeneous sextics on $\bbP(1^2,2,3,5^3)$, not divisible by $X_2$, and set $f = F|_{X_2=1}, g = G|_{X_2=1}$. If one of them is not homogeneous, say $f$, then
$$
f = f_6 + \tilde{f}
$$
where $f_6 = f_6(U_0,U_1,V,W,X_0,X_1)$ is homogeneous of degree $6$ and $\tilde{f}$ has degree $5$ or less. By the fact that $\deg(\tilde{f}) < 6$ and $g$ and $f$ are sextics, we have $\tilde{f}\notin (f,g)$ and thus there exists a point $q = (U_0,U_1,V,W,X_0,X_1)$ in the affine cone $\left\{ f = g = 0\right\}$ such that $\tilde{f}(q) \neq 0$. From the condition $f(q) = 0$, we have the equality $f_6(q) = -\tilde{f}(q)$, and by the fact that $\left\{ f = g = 0\right\}$ is an affine cone, then for all $\lambda\in \C^*$ the point
$$
\lambda \cdot q = (\lambda U_0, \lambda U_1, \lambda^2 V,\lambda^3 W, \lambda^5 X_0, \lambda^5 X_1)
$$
also belongs to $\left\{ f = g = 0\right\}$. If $\lambda$ is general, we have
$$
f_6(\lambda\cdot q) = \lambda^6f_6(q) = -\lambda^6\tilde{f}(q) \neq -\tilde{f}(\lambda\cdot q)
$$
since the equality $\lambda^6\tilde{f}(q) = \tilde{f}(\lambda\cdot q)$ is a polynomial condition of degree $6$ on $\lambda$. This leads to the contradiction that $\lambda \cdot q \notin \left\{ f = g = 0\right\}$ and the conclusion that $f$ and $g$ are homogeneous.

As a consequence, there exists a transformation of the form 
$$
[U_0:U_1:V:W:X_0:X_1:X_2] \mapsto [A\mathbf U:V:W:M\mathbf X]
$$
where $A\in GL_2(\C)$ and $M\in GL_3(\C)$, which eliminates $X_2$ from the equations of $Y_{(1,0)}$. Let us denote
$$
A = \left( \begin{array}{cc}
a & b \\ c & d
\end{array} \right) \text{ and } M = \left( \begin{array}{ccc}
\alpha_0 & \beta_0 & \gamma_0 \\ \alpha_1 & \beta_1 & \gamma_1 \\ \alpha_2 & \beta_2 & \gamma_2
\end{array} \right).
$$
This change of variables applied to the equations of $Y_{(1,0)}$ yields
\begin{align*}
(aU_0+bU_1)(\alpha_0X_0+\beta_0X_1+\gamma_0X_2) + (cU_0+dU_1)(\alpha_2X_0+\beta_2X_1+\gamma_2X_2) & = W^2, \\
(cU_0+dU_1)(\alpha_1X_0+\beta_1X_1+\gamma_1X_2) & = V^3.
\end{align*}
By the fact that this does not involve the variable $X_2$, we have 
$$
a\gamma_0 + c\gamma_2 = c\gamma_1 = b\gamma_0 + d\gamma_2 = d\gamma_1 = 0
$$
in other words,
$$
\left( \begin{array}{cc}
a & c \\ b & d
\end{array} \right) \left( \begin{array}{c}
\gamma_0 \\ \gamma_2
\end{array} \right) = 0 \text{ and } \left( \begin{array}{cc}
a & c \\ b & d
\end{array} \right) \left( \begin{array}{c}
0 \\ \gamma_1
\end{array} \right) = 0.
$$
This contradicts either $\det(A) \neq 0$, or $\det(M)\neq 0$. The conclusion is that $Y_{(1,0)}$ is not a cone over $\bbP$ and therefore, $Y$ is not a cone
\end{proof}

\section{The primitive polarizations of the K3 surfaces}\hypertarget{section 5}{}

We recall that the index of the polarized K3 surface $(S,-K_\bbP|_S)$, which is denoted by $i_s$ in \color{purple}\hyperlink{Table 2}{Table 2}\color{black}, is the divisibility of $-K_\bbP|_S$ in the Picard group of $S$, i.e., the largest integer $r$ such that $-\frac{1}{r}K_\bbP|_S$ is a Cartier divisor on $S$. Here, $\Gamma$ is a general member of $|-K_\bbP|_S|$, and we introduce $C$ a general member of $|-\frac{1}{i_S}K_\bbP|_S|$, so that $\Gamma = i_SC$ in $\mathrm{Pic}(S)$.

In what follows, we go through all the cases $\#9$ to $\#14$ that are listed in \color{purple}\hyperlink{Table 2}{Table 2} \color{black}and give a geometric description of the curve $C$.

\subsection{$\bbP = \bbP(1,4,5,10)$}

According to \color{purple}\hyperlink{Table 2}{Table 2}\color{black}, the index $i_S$ is equal to $2$. The genus of $C$ is $6$. In \color{purple}\hyperlink{Table 5}{Table 5}\color{black}, $S$ is explicitly given as the quintic hypersurface $u_0f_4(\mathbf u,v) + u_1^5 = 0$ in $\bbP(1,1,1,2)$, with $\deg f_4 = 4$, and $\Gamma$ is cut out on $S$ by a quartic. Therefore $C = \frac{1}{2}\Gamma$ is cut out by a quadric, so its defining equations are
$$
u_0f_4(\mathbf u,v) + u_1^5 = g_2(\mathbf u,v) = 0
$$
with $g_2$ a general homogeneous quadric polynomial.

\begin{lemma}\label{lem:plane quintic inflection}
The curve $C$ is isomorphic to a plane quintic with a total inflection point, i.e., there is a line $\Delta$ which is tangent to $C$ in $\bbP^2$ and $C|_\Delta$ is a quintuple point.

Conversely, any such plane quintic can be realized as a member of $|-\frac{1}{2}K_\bbP|_{S'}|$ for a K3 surface $S'\in |-K_\bbP|$.
\end{lemma}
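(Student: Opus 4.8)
The plan for the first assertion is to eliminate the weight-$2$ coordinate. Recall from the discussion preceding the statement that $C$ is cut out on $S=\{u_0f_4(\mathbf u,v)+u_1^5=0\}\subset\bbP'=\bbP(1,1,1,2)_{[\mathbf u:v]}$ by a general quadric $g_2$. Since $H^0(\bbP',\Oo_{\bbP'}(2))$ is spanned by the quadratic monomials in $\mathbf u$ together with $v$, a general $g_2$ has the form $q(\mathbf u)+\lambda v$ with $\lambda\ne0$, so that $\{g_2=0\}\subset\bbP'$ is the image of the isomorphism $\bbP^2_{[\mathbf u]}\hookrightarrow\bbP'$, $[\mathbf u]\mapsto[\mathbf u:-\lambda^{-1}q(\mathbf u)]$. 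Substituting $v=-\lambda^{-1}q(\mathbf u)$ in the equation of $S$ I would identify $C$ with the plane quintic $\{u_0\tilde f_4(\mathbf u)+u_1^5=0\}\subset\bbP^2$, where $\tilde f_4(\mathbf u)=f_4(\mathbf u,-\lambda^{-1}q(\mathbf u))$ is again a general quartic form (the $v$-free part of $f_4$ enters $\tilde f_4$ unchanged). Smoothness of $C$ then follows from Bertini: the system $\{u_0\tilde f_4+u_1^5\}$ on $\bbP^2$ has base locus the single point $[0:0:1]$, its general member is smooth off that point, and the $u_0$-derivative at $[0:0:1]$ is $\tilde f_4(0,0,1)\ne0$; the genus is $\binom{5-1}{2}=6$, in agreement with \hyperlink{Table 2}{Table 2}. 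Finally, restricting the equation of $C$ to the line $\Delta=\{u_0=0\}$ leaves $u_1^5=0$, i.e.\ $\Delta\cdot C=5\,[0:0:1]$ as a divisor on $\Delta$; thus $\Delta$ is tangent to the smooth curve $C$ at $[0:0:1]$ with contact order $5$, which is the asserted total inflection point.

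For the converse the plan is to run this construction backwards. Let $D\subset\bbP^2$ be a smooth plane quintic with a total inflection point $p$ and inflectional tangent $\Delta$. A projective change of coordinates brings $D$ to the form $\{u_0h_4(\mathbf u)+u_1^5=0\}$ with $p=[0:0:1]$ and $\Delta=\{u_0=0\}$, where $h_4$ is a quartic form with $h_4(0,0,1)\ne0$: indeed $D|_\Delta=5p$ forces the equation to be of this shape (after rescaling $u_1$), and smoothness of $D$ at $p$ is equivalent to $h_4(0,0,1)\ne0$, which moreover identifies $\Delta$ with the tangent line at $p$. I would then lift $D$ to $\bbP'$ by setting
$$
S'=\{\,u_0\bigl(h_4(\mathbf u)+v\,m(\mathbf u)\bigr)+u_1^5=0\,\}\subset\bbP',\qquad g_2=v,
$$
for a quadratic form $m$ to be fixed below, so that $S'\cap\{v=0\}$ is precisely $D$ inside the plane $\{v=0\}\cong\bbP^2$ and $2\,\bigl(S'\cap\{v=0\}\bigr)=\Oo_{\bbP'}(4)|_{S'}=-K_{\bbP'}|_{S'}$. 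Since the coefficient of $u_1^5$ in the equation of $S'$ is non-zero, $S'$ avoids the indeterminacy point of $\varphi:\bbP\dashrightarrow\bbP'$, so $\varphi$ restricts to an isomorphism from an anticanonical divisor of $\bbP$ onto $S'$; under it $C:=S'\cap\{v=0\}\cong D$ is a member of $|-\tfrac12K_\bbP|_{S'}|$, as required --- provided $S'$ is a K3 surface, i.e.\ has only ADE singularities.

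Establishing this is where I expect the real work to lie. The idea: by Bertini applied with $m$ varying, a general $S'$ is smooth away from the base locus $D\cup L$, where $L=\{u_0=u_1=0\}\cong\bbP(1,2)$; along $D$ away from its contact point $p$ with $L$ one sees directly that $S'$ is smooth, since the equation of $S'$ and the equation of $D\subset\{v=0\}$ share a nonvanishing $\mathbf u$-partial there; and along $L$ the $u_0$-derivative of the equation, restricted to $L$, equals $h_4(0,0,1)+v\,m(0,0,1)$ in the chart $\{u_2\ne0\}$, so choosing $m$ with zero $u_2^2$-coefficient makes $S'$ smooth along $L\smallsetminus\{[0:0:0:1]\}$, while inspecting the linear part of the equation shows $S'$ is smooth at $p=[0:0:1:0]$ as well. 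What remains is the behaviour of $S'$ at the single singular point $[0:0:0:1]$ of $\bbP'$, equivalently --- after pulling back by $\varphi$ --- at the coordinate point $[0:0:0:1]$ of $\bbP(1,4,5,10)$.

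The hard part will therefore be to show that $m$ (using, if need be, the residual freedom in the normalisation $h_4$ within its $\mathrm{PGL}_3$-orbit) can be chosen so that $S'$ has an ADE singularity at that point: concretely, to write out the local equation there and reduce it to an $A_n$ normal form, or equivalently to show that the linear slice $\{\,f_4:f_4(\mathbf u,0)=h_4\,\}$ of the ambient system $|\,u_0f_4+u_1^5\,|$ --- whose general member has singularities $A_1,2A_4$ by \hyperlink{Table 2}{Table 2} --- is not entirely contained in the closed locus of surfaces with worse-than-ADE singularities, after which openness of the ADE condition in the family lets one conclude that a general $m$ works. This local analysis at $[0:0:0:1]$ is the crux of the converse.
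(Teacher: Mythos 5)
Your proof of the first assertion is the paper's proof: normalize the cutting quadric to $v=\alpha(\mathbf u)$, substitute, read off $C|_{\{u_0=0\}}=5p$, and check smoothness (the paper gets smoothness from $C$ being a general hyperplane section of $S$ rather than from Bertini on $\bbP^2$, but this is cosmetic). Your converse is also the same construction as the paper's: normalize the plane quintic to $u_0h_4(\mathbf u)+u_1^5=0$ and lift it to a quintic surface $S'=\{u_0f_4(\mathbf u,v)+u_1^5=0\}$ in $\bbP(1,1,1,2)$ with $f_4|_{v=\alpha(\mathbf u)}=h_4$, so that $C'$ is cut out on $S'$ by a quadric and $S'\in\mathcal L=\varphi(|-K_\bbP|)$. (The paper reaches this normal form by citing the extension construction of \cite[Appendix A.2]{Lo} and arguing modulo $u_0$; you write the lift down directly, and you drop the $v^2$-term of $f_4$, which only reduces your available freedom.)

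The one substantive divergence is the point you flag as the ``crux'': verifying that $S'$ is a K3 surface, i.e.\ has only ADE singularities, in particular at the singular point $p_v$ of $\bbP(1,1,1,2)$. Your proposal leaves this open, so as written the converse is not complete. You should know, however, that the paper does not carry out this verification either: it disposes of it in one sentence, asserting that since $S'$ is a member of $\mathcal L$ it ``is isomorphic to a general anticanonical divisor of $\bbP$'' --- which as stated only gives that $S'$ is the image of \emph{some} anticanonical divisor, not a general one. So your instinct that something must be said here is sound, and the outline you give (Bertini for the family with $f_4|_{v=\alpha}=h_4$ fixed, smoothness along $C'$ and along $\{u_0=u_1=0\}$, then a local normal-form or semicontinuity argument at $p_v$, using that the unconstrained general member has $A_1,2A_4$) is a reasonable way to fill it; but note that the preimage of $p_v$ under $\varphi$ is the whole curve $\{x=z=0\}\subset\bbP(1,4,5,10)$, not just the coordinate point $p_w$, so the ``equivalently, after pulling back by $\varphi$'' reformulation of that local analysis is not quite right. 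If you complete that local check you will have proved strictly more than the paper does.
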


\begin{proof}
Up to scaling, we may choose $g_2(\mathbf u,v) = v - \alpha(u_0,u_1,u_2)$ where $\alpha$ is a conic. Hence $C$ is cut out by
$$
u_0f_4(u_0,u_1,u_2,v) + u_1^5 = 0,\, v = \alpha(u_0,u_1,u_2).
$$
Substituting $\alpha(u_0,u_1,u_2)$ for $v$ in the first equation naturally realizes $C$ as a quintic in $\bbP^2$ with coordinates $[u_0:u_1:u_2]$. Moreover, the restriction of $C$ to the line $u_0 = 0$ is a quintuple point. Let $\Delta = \left\{ u_0 = 0\right\}$,
$$
C|_\Delta = 5p
$$
where $p = \left\{ u_0 = u_1 = 0\right\}$ in $\bbP^2$. This is an inflection point of order $5$ of the curve $C$. The tangent cone of $C$ at this point is the reduced line $\Delta$ by generality of $f_4$ (the curve $C$ is indeed smooth, since it is a general hyperplane section of $S$ in $\bbP^6$ and $S$ has isolated singularities). 

Conversely, let $C'$ be such a plane quintic. Up to a choice of coordinates, $C'$ is given by an equation of the form $u_0g_4(u_0,u_1,u_2) + u_1^5 = 0$ with $\deg g_4 = 4$. It holds that $C'$ in its canonical model can be extended by a quintic surface in $\bbP(1,1,1,2)$, as the following points out. Following the construction that was done in \cite[Appendix A.2]{Lo}, there exists a quintic polynomial $f_5(\mathbf u,v)$ and a quadric $\alpha(\mathbf u) = \alpha(u_0,u_1,u_2)$ such that
$$
C' = \left\{ f_5(\mathbf u,v) = 0,\, v = \alpha(\mathbf u) \right\}
$$
in $\bbP(1,1,1,2)$. Hence the quintic surface $\left\{ f_5(\mathbf u,v) = 0\right\}$ in $\bbP(1,1,1,2)$ is an extension of $C'$.

Here $f_5$ and $\alpha$ are so that $f_5|_{v = \alpha(\mathbf u)} = u_0g_4(u_0,u_1,u_2) + u_1^5$. Thus $f_5 = u_1^5 + \lambda \beta(\mathbf u,v)(v - \alpha(\mathbf u)) \text{ (mod } u_0)$ in $\C[u_0,u_1,u_2,v]$ for some constant $\lambda$ and $\deg \beta = 3$. Picking $\lambda = 0$ doesn't change $C'$, and yields $f_5 = u_0f_4(\mathbf u,v) + u_1^5$ for some homogeneous quartic $f_4$ on $\bbP(1,1,1,2)$. 

Hence $C'$ is cut out on $S'$ by a quadric, where $S'$ is the quintic $u_0f_4(\mathbf u,v) + u_1^5 = 0$. Recall from \color{purple}\hyperlink{Table 4}{Table 4} \color{black}that $\varphi : \bbP \dashrightarrow \bbP(1,1,1,2)$ restricts to an isomorphism on the general member of $|-K_\bbP|$; here $S'$ is a member of $\mathcal L = \varphi(|-K_\bbP|)$ so it is isomorphic to a general anticanonical divisor of $\bbP$. Moreover, $C' = -\frac{1}{2}K_\bbP|_{S'}$ in $\mathrm{Pic}(S')$.
\end{proof}

\subsection{$\bbP = \bbP(1,2,6,9)$}

According to \color{purple}\hyperlink{Table 2}{Table 2}\color{black}, the index $i_S$ of the polarization $(S,-K_\bbP|_S)$ is equal to $3$. The curve $C$ has genus $4$. We know by \color{purple}\hyperlink{Table 5}{Table 5} \color{black}that $S$ is a degree $10$ hypersurface in $\bbP(1,1,3,5)$, of equation $u_0f_9(\mathbf u,v,s) + s^2 = 0$ and $\Gamma$ is the intersection of $S$ with a general $9$-ic. Hence $C$ is cut out on $S$ by a general cubic of $\bbP(1,1,3,5)$, i.e., its equations are
$$
u_0f_9(\mathbf u,v,s) + s^2 = 0,\, v = \alpha(u_0,u_1)
$$
where $\alpha$ is a homogeneous cubic polynomial on $\bbP^1$.

\begin{lemma}
The curve $C$ is isomorphic to a $10$-ic curve in $\bbP(1,1,5)$ i.e., a quadric section of the cone over a rational normal quintic curve. Equivalently, $C$ is a smooth hyperelliptic curve of genus $4$.

Conversely, any such curve can be realized as a member of $|-\frac{1}{3}K_\bbP|_{S'}|$ for a K3 surface $S'\in |-K_\bbP|$.
\end{lemma}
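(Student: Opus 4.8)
The plan is to follow the two-step pattern of Lemma~\ref{lem:plane quintic inflection}: put the equations of $C$ into a Weierstrass normal form for the direct statement, then exhibit an explicit K3 extension for the converse. For the direct statement, note that a general cubic on $\bbP'=\bbP(1,1,3,5)_{[\mathbf u:v:s]}$ is, up to scaling, of the form $v-\alpha(u_0,u_1)$ with $\alpha$ a binary cubic, since the only degree-$3$ monomials are $v$ and the cubics in $u_0,u_1$. Substituting $v=\alpha(u_0,u_1)$ into the equation $u_0f_9(\mathbf u,v,s)+s^2=0$ of $S$, and writing $f_9|_{v=\alpha}=s\,q_4(u_0,u_1)+p_9(u_0,u_1)$ — forced because $s$ has degree $5$ while $s^2$ has degree $10>9$ — realizes $C$ as the degree-$10$ hypersurface
$$
s^2+u_0\,q_4(u_0,u_1)\,s+u_0\,p_9(u_0,u_1)=0
$$
in $\bbP(1,1,5)_{[u_0:u_1:s]}$. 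Completing the square in $s$ (legitimate, as $u_0q_4$ has degree $5$) brings this to $s^2=\rho_{10}(u_0,u_1)$ with $\rho_{10}=\tfrac14u_0^2q_4^2-u_0p_9$ a binary form of degree $10$; as $f_9$ is general, $p_9$ and hence $\rho_{10}$ is a general form divisible by $u_0$, in particular squarefree, so up to $\mathrm{PGL}_2$ on $\bbP^1_{[u_0:u_1]}$ its zero scheme is a general set of $10$ points. Thus $C$ is the double cover of $\bbP^1$ branched along $10$ points, i.e.\ a smooth hyperelliptic curve of genus $4$; and since $|\Oo_{\bbP(1,1,5)}(5)|$ maps $\bbP(1,1,5)$ to the cone over the rational normal quintic with vertex the image of $p_s$, the curve $C$ avoids $p_s$, and a degree-$10$ hypersurface pulls back a quadric, one also gets the description of $C$ as a quadric section of that cone.

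For the converse, let $C'$ be a smooth hyperelliptic curve of genus $4$ — equivalently a $10$-ic in $\bbP(1,1,5)$ missing $p_s$, equivalently a quadric section of the cone over the rational normal quintic. Write it in Weierstrass form $s^2=\rho_{10}(u_0,u_1)$ with $\rho_{10}$ squarefree of degree $10$, and use a projective transformation of $\bbP^1$ to move one of the ten branch points to $[0:1]$, so that $\rho_{10}=u_0\,r_9(u_0,u_1)$ with $r_9(0,1)\neq0$. Then set
$$
S'=\bigl\{\,s^2+u_0f_9(\mathbf u,v,s)=0\,\bigr\}\subset\bbP(1,1,3,5),\qquad f_9:=-r_9(u_0,u_1)+v\,g_6(\mathbf u,v,s),
$$
for a general homogeneous polynomial $g_6$ of degree $6$. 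By construction $S'\cap\{v=0\}$ is the $10$-ic $s^2=u_0r_9=\rho_{10}$ in $\bbP(1,1,5)$, namely $C'$, and $\{v=0\}$ represents $\Oo_{\bbP'}(3)$, so the class of $C'$ on $S'$ is $\Oo_{\bbP'}(3)|_{S'}=-\tfrac13K_\bbP|_{S'}$. One then checks that for general $g_6$ the surface $S'$ is a K3 with ADE singularities: by the Jacobian criterion $S'$ is singular only along $\{u_0=s=0\}$, precisely at the zeros of $f_9(0,u_1,v,0)=-r_9(0,u_1)+v\,g_6(0,u_1,v,0)$, which for general $g_6$ is a general section of $\Oo_{\bbP(1,3)}(9)$ with fixed nonzero $u_1^9$-coefficient and hence has three simple zeros, each producing a node, together with the coordinate point $p_v$, where $S'$ meets the $\tfrac13$-quotient singularity of $\bbP'$ transversally and acquires an $A_2$ — reproducing the data $3A_1,A_2$ of \hyperlink{Table 2}{Table 2} — while $p_s\notin S'$. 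Since $S'$ then belongs to $\mathcal L=\varphi(|-K_\bbP|)$, it is identified via the birational model $\varphi:\bbP\dashrightarrow\bbP'$ of \hyperlink{Table 4}{Table 4} (which restricts to an isomorphism on such anticanonical divisors, its indeterminacy point $p_w$ not lying on $\varphi^{-1}(S')$ as the latter's equation contains $w^2$ with nonzero coefficient) with an anticanonical divisor of $\bbP$ having only ADE singularities, and $C'$ is the announced member of $|-\tfrac13K_\bbP|_{S'}|$.

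The only genuinely non-formal point, and the main obstacle, is the singularity analysis in the converse: showing that the affine-linear family $f_9=-r_9+v\,g_6$ meets the open dense locus of $f_9$ for which $S'$ is a K3 with exactly ADE singularities, in the $3A_1+A_2$ configuration, and that $C'=S'\cap\{v=0\}$ is disjoint from $\mathrm{Sing}(S')$. This amounts to a local computation at the three points of $\{u_0=s=0\}$ cut out by $f_9(0,u_1,v,0)$ and at $p_v$; everything else is a direct unwinding of the equations, entirely parallel to Lemma~\ref{lem:plane quintic inflection}.
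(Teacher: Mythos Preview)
Your proof is correct and follows essentially the same approach as the paper's: substitute the cubic $v=\alpha(u_0,u_1)$ to realize $C$ as a $10$-ic in $\bbP(1,1,5)$, then for the converse normalize a branch point to $[0:1]$ and lift the resulting equation $u_0h'_9+s^2=0$ back to a surface $S'\in\mathcal L$ in $\bbP(1,1,3,5)$. You add two pieces of work the paper skips --- the Weierstrass normal form (the paper stops at $u_0h_9(\mathbf u,s)+s^2=0$ and simply reads off the $g^1_2$ from $|\Oo_{\bbP(1,1,5)}(1)|$) and the explicit singularity analysis of $S'$ (the paper just uses the freedom in choosing the lift $f'_9$ to place $S'$ in the general locus of $\mathcal L$, where $\varphi$ restricts to an isomorphism) --- but the skeleton is identical. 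One small imprecision: your $\rho_{10}$ is not ``a general form divisible by $u_0$'' whose zero scheme is a general set of $10$ points, since the zero at $[0:1]$ is forced; but all you actually need is that $\rho_{10}$ is squarefree, which does hold for general $f_9$.
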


\begin{proof}
By the above equations, $C$ is naturally realized as a degree $10$ curve on $\bbP(1,1,5)$ with coordinates $[u_0:u_1:s]$ of the form $u_0h_9(\mathbf u,s) + s^2 = 0$. Hence the linear system $|\Oo_{\bbP(1,1,5)}(1)|$, whose base locus $\left\{ u_0 = u_1 = 0\right\}$ does not meet $C$, restricts to a $g_2^1$ on $C$.

Conversely, let $C'$ be a curve in $\bbP(1,1,5)$ of degree $10$. In a suitable choice of coordinates, the point $[u_0:u_1] = [0:1]$ belongs to the branch locus of the double cover $C\to \bbP^1$. Hence the line $\Delta = \left\{ u_0 = 0 \right\}$ in $\bbP(1,1,5)$ is tangent to $C'$. As a result, the restriction $C'|_{u_0 = 1}$ is a double point, which yields the following equation for $C'$:
$$
u_0h'_9(\mathbf u,s) + s^2 = 0,
$$
with $\deg h'_9 = 9$. Introducing $v$ a coordinate of weight $3$ and $f'_9(\mathbf u,v,s)$ a degree $9$ homogeneous polynomial on $\bbP(1,1,3,5)$ such that $f'_9(\mathbf u,v,s)|_{v = \alpha(u_0,u_1)} = h'_9(\mathbf u,v)$, we realize $C'$ as a complete intersection in $\bbP(1,1,3,5)$ of equations
$$
u_0f'_9(\mathbf u,v,s) + s^2 = 0,\, v = \alpha(u_0,u_1).
$$
This makes $C'$ a curve in the surface $S' = \left\{ u_0f'_9(\mathbf u,v,s) + s^2 = 0\right\}$, which is a member of $\mathcal L = \varphi(|-K_\bbP|)$, meaning that $S'$ is isomorphic to a general anticanonical divisor of $\bbP$. Moreover, the moving part of $\mathcal L|_{S'}$ is the restriction to $S'$ of the $9$-ics, and therefore $3C$ is a member of the moving part of $\mathcal L|_S$. This makes $3C$ the class of the hyperplane sections of $S'$ in $\bbP^{28}$, in other words $3C = -K_\bbP|_{S'}$ in $\mathrm{Pic}(S')$. This yields $C = -\frac{1}{3}K_\bbP|_{S'}$.
\end{proof}

\subsection{$\bbP = \bbP(1,2,3,6)$}

This is the only example of our list for which $-\frac{1}{i_S}K_\bbP$ is Cartier. The index $i_S$ is equal to $2$ and $-\frac{1}{2}K_\bbP$ is the class of sextic surfaces. Hence, $S$ is a general surface of degree $12$ and $C$ is cut out on $S$ by a sextic. The projective model associated to $-\frac{1}{2}K_\bbP$ in which $C$ is a hyperplane section of $S$ is a realization of $\bbP$ as a variety of degree $(-\frac{1}{2}K_\bbP)^3 = 6$ in $\bbP^7$. It factors as the composite map 
\begin{center}
\begin{tikzcd}
\bbP \arrow[rr,hookrightarrow,"v_2"] & & \bbP(1,1,2,3,3) \arrow[rr,dashrightarrow,"|\Oo(3)|"] & & \bbP^7
\end{tikzcd}
\end{center}
where $v_2$ is the $2$-Veronese embedding mentioned in \color{purple}\hyperlink{Table 3}{Table 3}\color{black}.

Let $[u_0:u_1:v:s_0:s_1]$ be coordinates on $\bbP(1,1,2,3,3)$, then $v_2$ realizes $\bbP$ as the hypersurface $u_0s_0 = v^2$, $S$ as the intersection of $\bbP$ with a general sextic, and $C$ as the intersection of $S$ with a general cubic.

Consider now $\bbP' := \bbP(1,1,1,3)$ with coordinates $[a_0:a_1:a_2:b]$ and the rational map $\psi : \bbP' \dashrightarrow \bbP(1,1,2,3,3)$ given by the expression 
$$
[u_0:u_1:v:s_0:s_1] = [a_0:a_1:a_0a_2:a_0a_2^2:b].
$$ 
Its image satisfies the same equation as $\bbP$, hence it is equal to $\bbP$. There is a birational map $\varphi$ which makes the following diagram commute.
\begin{center}
\begin{tikzcd}
\bbP \arrow[rr,dashrightarrow,"\varphi"] \arrow[rd,hookrightarrow,"v_2",swap] & & \bbP' \arrow[ld,dashrightarrow,"\psi"] \\
 & \bbP(1,1,2,3,3) & 
\end{tikzcd}
\end{center}
One checks from the expression of $v_2$ in \color{purple}\hyperlink{Table 3}{Table 3} \color{black}and that of $\psi$ that $\varphi$ has the following expression with regard to the weighted coordinates.
$$
\varphi : [x:y:z:w] \mapsto [a_0:a_1:a_2:b] = [x^3:xy:z:x^3w].
$$
The rational map $\varphi$ admits a rational inverse $\varphi^{-1}$, which is: 
$$
\varphi^{-1} : [a_0:a_1:a_2:b] \in \bbP' \mapsto [x:y:z:w] = [a_0:a_0a_1:a_0^2a_2:a_0^3b]\in \bbP.
$$
Indeed, a computation gives
$$
\varphi \circ \varphi^{-1}: [a_0:a_1:a_2:b] \mapsto [a_0^3:a_0^2a_1:a_0^2a_2:a_0^6b] = [a_0:a_1:a_2:b].
$$
Let us consider $q = [a_0:a_1:a_2:b]$ a fixed point in $\bbP'$ with a chosen representative $(a_0,a_1,a_2,b)\in \C^4$, and $\sqrt{a_0}$ a square root of $a_0$. Then the image of $q$ by $\varphi^{-1}$ is
\begin{equation}
\label{eq:phiinverse}
\varphi^{-1}(q) = [\sqrt{a_0}:a_1:\sqrt{a_0}a_2:b] \in \bbP,
\end{equation}
and a computation yields 
$$
v_2 \circ \varphi^{-1}(q) = \psi(q),
$$
as required in the commutative diagram.

The map $\varphi$ contracts the vanishing locus of $x$ to a point $p$. The divisor $D = \left\{ x=0\right\}$ has degree $2$ on $C$; indeed, $C$ is cut out on $\bbP$ by two general equations of respective degree $12$ and $6$ with regard to the grading of $\bbP$, and $D\in |\Oo_\bbP(1)|$, therefore:
$$
\deg D|_C = D\cdot C = [\Oo_\bbP(1)]\cdot [\Oo_\bbP(12)]\cdot [\Oo_\bbP(6)] = 2.
$$
This ensures that the restriction $\varphi|_C$ maps $2$ distinct points to $p$. The indeterminacy locus $x=z=0$ does not meet $C$ by the generality assumption, so the map $\varphi$ induces by restriction to $C$ a morphism $C\to \bbP(1,1,1,3)$ which has degree $1$ and makes $C$ the normalization of its image. Let $C_0$ be the image of $C$ by $\varphi$.

\begin{lemma}
\label{lem:smoothbranches}
The curve $C_0$ has a unique singularity at which two smooth local branches meet. The morphism $C\to C_0$ is an isomorphism over the complement of this singular point.
\end{lemma}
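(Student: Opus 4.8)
The plan is to split $C_0$ into its part away from the point $p=\varphi(\{x=0\})=[0:0:1:0]=p_{a_2}$, where the explicit inverse of $\varphi$ makes everything an isomorphism, and the point $p$ itself, where a short local computation shows each branch is smooth. For the first part, one checks directly from the formulas $[x:y:z:w]\mapsto[x^3:xy:z:x^3w]$ and $[a_0:a_1:a_2:b]\mapsto[a_0:a_0a_1:a_0^2a_2:a_0^3b]$ that $\varphi$ and $\varphi^{-1}$ restrict to mutually inverse isomorphisms $\{x\neq0\}\xrightarrow{\ \sim\ }\{a_0\neq0\}$ (the composition $\varphi\circ\varphi^{-1}=\mathrm{id}$ is already verified in the text, and $\varphi^{-1}\circ\varphi=\mathrm{id}$ on $\{x\neq0\}$ is the same kind of computation). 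Now $C$ meets $\{x=0\}$ exactly along $D$; all of $D$ is carried by $\varphi$ to $p$ (the indeterminacy locus $\{x=z=0\}$ missing $C$); and since $a_0=x^3$, the locus $\{a_0=0\}$ pulls back to $\{x=0\}$, so $\varphi|_C^{-1}(p)=D$ and any point of $C_0$ off $p$ lifts to a point of $C$ with $x\neq0$. Thus $C_0\setminus\{p\}=\varphi(C\setminus D)$, and the isomorphism above restricts to $C\setminus D\xrightarrow{\ \sim\ }C_0\setminus\{p\}$. In particular $C_0$ is smooth outside $p$ and $C\to C_0$ is an isomorphism over $C_0\setminus\{p\}$.

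For the behaviour at $p$, recall that $D=p_1+p_2$ with $p_1\neq p_2$, and that for general $C$ the points $p_1,p_2$ lie off $\{y=0\}\cup\{z=0\}$: indeed $|-\tfrac12K_\bbP|_S|=|\Oo_\bbP(6)|_S|$ contains the base-point-free system cut out by $|\Oo_\bbP(6)|$, which is base-point-free on $\bbP$ because $6$ is divisible by every weight, so a general $C$ avoids any prescribed finite subset of $S$ (in particular $S\cap\{x=y=0\}$ and $S\cap\{x=z=0\}$) and meets the fixed curve $S\cap\{x=0\}$ transversally, forcing $D$ reduced. Fix $p_0\in\{p_1,p_2\}$. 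On the smooth curve $C$, inside the line bundle $\Oo_\bbP(1)|_C$ the section $x$ vanishes to order exactly $1$ at $p_0$, while the sections $y$ and $z$ of $\Oo_\bbP(2)|_C$ and $\Oo_\bbP(3)|_C$ are nonzero at $p_0$; hence the rational function $\tfrac{xy}{z}=\varphi^{*}\!\big(\tfrac{a_1}{a_2}\big)$ is a uniformizer of $C$ at $p_0$. As $\tfrac{a_1}{a_2}$ is one of the local coordinate functions of $\bbP'$ at the smooth point $p=p_{a_2}$, this says $d(\varphi|_C)_{p_0}\neq0$, so $\varphi|_C$ is an immersion at $p_0$; being moreover injective near $p_0$ (it is injective on $C\setminus D$, and $p_0$ is the only point of $D$ nearby), it identifies a neighbourhood of $p_0$ with a smooth curve germ through $p$. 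Doing this at $p_1$ and at $p_2$ presents $C_0$ near $p$ as two smooth branches meeting at $p$; so $p$ is a singular point of $C_0$, and by the first paragraph the only one. Finally $C$ is smooth and $C\to C_0$ is finite and birational, hence the normalization of $C_0$, and by the isomorphisms above it is an isomorphism precisely over $C_0\setminus\{p\}$.

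The only real obstacle is the reducedness of $D$, i.e. the transversality of a general $C$ to the fixed divisor $\{x=0\}$: this is exactly what makes $\tfrac{xy}{z}$ a uniformizer of $C$ at each point of $D$, hence each branch of $C_0$ at $p$ smooth rather than, say, cuspidal. That, together with the harmless requirement that $D$ miss $\{y=0\}\cup\{z=0\}$, is the only genuine use of the generality of $C$; everything else is forced by the explicit formulas for $\varphi$ and $\varphi^{-1}$ together with an elementary order-of-vanishing count on $C$.
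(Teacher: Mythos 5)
Your proof is correct and follows the same two-step decomposition as the paper: the explicit mutually inverse formulas give an isomorphism $\{x\neq 0\}\simeq\{a_0\neq 0\}$ handling everything away from $p$, and a local computation at the two points of $D\cap C$ produces the two smooth branches. The difference lies in how that local step is executed. The paper passes to the smooth $\mu_3$-cover $\C^3\to \{z\neq 0\}$, lifts $\varphi$ to $(\mathtt x,\mathtt y,\mathtt w)\mapsto(\mathtt x^3,\mathtt x\mathtt y,\mathtt x^3\mathtt w)$, and checks that the kernel of its differential along the exceptional divisor is exactly the tangent plane of that divisor, so that transversality of $C$ with $D$ forces $d(\varphi|_C)\neq 0$. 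You instead exhibit the function $a_1/a_2$, regular at the smooth point $p=p_{a_2}$ of $\bbP'$, whose pullback $xy/z$ is a uniformizer of $C$ at each $p_i$ once $\mathrm{ord}_{p_i}(x|_C)=1$ and $y,z$ are nonzero there; this sidesteps the quotient-singularity bookkeeping and makes explicit the reliance on transversality of $C$ with $\{x=0\}$, which the paper uses but only asserts. Your justification of the genericity inputs (reducedness of $D|_C$ and avoidance of the coordinate loci, via base-point-freeness of $|\Oo_\bbP(6)|$) is at least as careful as the paper's. The closing observation that $C\to C_0$ is the normalization anticipates the next lemma and is not needed here, but is harmless.
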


\begin{proof}
From the expression  of $\varphi$ in coordinates, we identify that its exceptional locus is $D = \left\{ x = 0 \right\} \subset \bbP$ and $\varphi(D)$ is the point $p = [0:0:1:0] \in \bbP' = \bbP(1^3,3)$. Moreover, $C$ is smooth and the transverse intersection $D\cap C$ consists of two points. From the expression \color{purple}(\ref{eq:phiinverse})\color{black}, we identify that the only indeterminacy point of the rational inverse $\varphi^{-1}$ of $\varphi$ is $p$. Since $D$ is the preimage of $p$ by $\varphi$ and $\varphi \circ \varphi^{-1} = \mathrm{id}$ on the regular locus of $\varphi^{-1}$, we have $\mathrm{Exc}(\varphi) = D$, and moreover $\varphi$ is an isomorphism on its regular locus minus $D$, i.e., $\varphi$ induces by restriction an isomorphism from $\bbP - (D\cup \left\{ x=z=0 \right\})$ onto its image.

As a result, the morphism $\varphi|_C : C \to C_0$ is an isomorphism outside of $D\cap C$, and $C_0$ has only one singular point which is the intersection of two local branches. To prove that these two local branches are smooth, it suffices to show that the kernel of the differential $d\varphi$ coincides with the tangent of $D$ at any of the two points $D\cap C$.

Consider the open subset $U = \left\{ z \neq 0 \right\} \subset \bbP(1,2,3,6)$. By generality of $C$, this open subset contains the two points $D\cap C$. On the one hand $U$ is isomorphic to the quotient of $\C^3_{(\mathtt x,\mathtt y,\mathtt w)}$ under the following action of the group of cubic roots of unity $\mu_3$:
$$
\zeta \cdot (\mathtt x,\mathtt y,\mathtt w) = (\zeta \mathtt x,\zeta^2 \mathtt y,\mathtt w).
$$
On the other hand, the indeterminacy locus $\left\{ x=z=0 \right\}$ of $\varphi$ is disjoint from $U$, and the image of $U$ via $\varphi$ is the open subset $\left\{ a_2 \neq 0 \right\}$ of $\bbP(1^3,3)_{[a_0:a_1:a_2:b]}$, which is isomorphic to $\C^3_{(\mathtt a_0,\mathtt a_1,\mathtt b)}$.

Consider the map $\overline \varphi : \C^3_{(\mathtt x,\mathtt y,\mathtt w)} \to \C^3_{(\mathtt a_0,\mathtt a_1,\mathtt b)}$ given by the expression
$$
\overline \varphi : (\mathtt x,\mathtt y,\mathtt w) \mapsto (\mathtt x^3,\mathtt xy, \mathtt x^3\mathtt w).
$$
This map is invariant under the action of $\mu_3$ and it fits in the following commutative diagram, where the left vertical arrow is the quotient map onto $U \simeq \sfrac{\C^3}{\mu_3}$, which is generically a local diffeomorphism:
\begin{center}
\begin{tikzcd}
\C^3 \arrow[d,"3:1",swap] \arrow[rr,"\overline \varphi"] & & \C^3 \arrow[d,"\simeq"] \\
U \arrow[rr,"\varphi",swap] & & \varphi(U)
\end{tikzcd}
\end{center}
Let us denote by $C', D'\subset \C^3$ the respective preimages of $C\cap U$ and $D\cap U$ by the quotient map $\C^3 \to U$. Let $p_1,p_2$ be the two intersection points of $D$ and $C$; by generality, each point $p_i$ has three distinct preimages in $D'\cap C'$, and above the point $p_i$ the quotient map $\C^3 \to U$ is a local diffeomorphism. Hence at the general point $\mathtt p \in D' \cap C'$, it is enough to show that
$$
\mathrm{ker}(d_\mathtt{p} \overline \varphi) = T_\mathtt{p} D'.
$$
The equality above follows from a straightforward computation, as $D'$ is the vanishing locus of $\mathtt x$ and $\overline \varphi$ is the map $(\mathtt x,\mathtt y,\mathtt w) \mapsto (\mathtt x^3,\mathtt x \mathtt y,\mathtt x^3 \mathtt w)$.
\end{proof}

\begin{lemma}\label{lem:plane sextic oscnode}
The curve $C_0$ is isomorphic to a plane sextic with an oscnode at a point $p$, i.e., $C_0$ has two smooth local branches meeting with contact order $3$ at its singular point $p$. Moreover, there is a line $\Delta \subset \bbP^2$ through $p$ such that $C_0|_\Delta = 6p$. 

Besides, $C$ is the normalization of $C_0$. 
\end{lemma}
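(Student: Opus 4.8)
The plan is to realize $C_0$ inside a $2$-plane sitting in $\bbP' = \bbP(1,1,1,3)$, compute its degree there, and then read off the singularity from Lemma \ref{lem:smoothbranches} together with a genus count.

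First I would cut out the plane. On $\bbP = \bbP(1,2,3,6)$ the curve $C$ is the complete intersection of a general sextic $G_6$ and a general $12$-ic $F_{12}$. Pulling $G_6$ back along $\varphi^{-1}: [a_0:a_1:a_2:b]\mapsto[a_0:a_0a_1:a_0^2a_2:a_0^3b]$, a monomial $x^iy^jz^kw^l$ of degree $6$ becomes $a_0^{\,i+j+2k+3l}a_1^{\,j}a_2^{\,k}b^{\,l}$, and a direct check shows that the smallest power of $a_0$ occurring is $3$, attained only by the monomials $w$ and $y^3$. Hence $(\varphi^{-1})^* G_6 = a_0^3\,H_3$ with $H_3$ homogeneous of degree $3$ of the form $H_3 = c\,b + (\text{cubic in }a_0,a_1,a_2)$, where $c$ is the coefficient of $w$ in $G_6$, hence nonzero for general $G_6$. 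Since $C \not\subset \{x=0\}$, its image $C_0 = \varphi(C)$ is not contained in $\{a_0 = 0\}$, so $C_0 \subset \Pi := \{H_3 = 0\}$. As $b$ appears linearly in $H_3$ with invertible coefficient, $\Pi$ avoids the point $p_b = [0:0:0:1]$ and the linear projection $\bbP' \dashrightarrow \bbP^2 = \bbP(1,1,1)_{[a_0:a_1:a_2]}$ restricts to an isomorphism $\Pi \xrightarrow{\ \sim\ } \bbP^2$. Consequently $C_0$ avoids $p_b$ and maps isomorphically onto a plane curve, which I still denote $C_0$.

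Next I would compute its degree. The composite $C \hookrightarrow \bbP \dashrightarrow \bbP^2$ is $[x:y:z:w]\mapsto[x^3:xy:z]$, given by the subsystem $\langle x^3,\,xy,\,z\rangle$ of $|\Oo_\bbP(3)|$, whose base locus $\{x=z=0\}$ is disjoint from $C$ by generality (as already used above Lemma \ref{lem:smoothbranches}). Thus $\varphi|_C$ is a morphism, and since it has degree $1$ onto $C_0$,
\[
\deg_{\bbP^2} C_0 \;=\; \Oo_\bbP(3)\cdot C \;=\; \Oo_\bbP(3)\cdot\Oo_\bbP(12)\cdot\Oo_\bbP(6) \;=\; \frac{3\cdot 12\cdot 6}{1\cdot 2\cdot 3\cdot 6} \;=\; 6,
\]
so $C_0$ is a plane sextic and $C\to C_0$ is its normalization. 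By Lemma \ref{lem:smoothbranches}, $C_0$ has a unique singular point, at $p = [0:0:1:0]$, consisting of two smooth local branches. A plane sextic has arithmetic genus $10$, whereas $g(C)=7$ by Table 2, so the delta invariant of $C_0$ at $p$ equals $10-7=3$; for two smooth branches the delta invariant is their local intersection multiplicity, so the branches meet with contact order $3$, i.e.\ $p$ is an oscnode. Finally, set $\Delta = \{a_0=0\}\subset\bbP^2$, which passes through $p=[0:0:1]$: its preimage on $C$ is $\{x^3=0\}\cap C = 3p_1 + 3p_2$, where $p_1,p_2$ are the two points of $\{x=0\}\cap C$, both mapping to $p$. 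Pulling the degree-$6$ divisor $\Delta\cap C_0$ back along the normalization $C\to C_0$ therefore yields a divisor supported over $p$, so $\Delta\cap C_0$ is supported at $p$; since it has degree $6$ we get $C_0|_\Delta = 6p$. That $C$ is the normalization of $C_0$ is part of Lemma \ref{lem:smoothbranches}.

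The main obstacle is the first step: correctly isolating the plane $\Pi$ from the pullback of $G_6$ and verifying that the coefficient of $w$ is generically nonzero, so that $\Pi$ is genuinely a $\bbP^2$ and the projection restricts to an isomorphism on it (hence on $C_0$). Once $C_0$ is realized as a plane sextic, the singularity type is forced by the genus drop together with Lemma \ref{lem:smoothbranches}, and the line $\Delta$ follows from a short divisor computation.
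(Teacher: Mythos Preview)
Your proof is correct and follows essentially the same route as the paper: realize $C_0$ in a plane via the cubic $H_3=0$ (the paper obtains the same cubic $b=\tau a_1^3+a_0q(\mathbf a)$ by first pushing forward the $12$-ic $S$ and then cutting by the sextic, whereas you pull back the sextic $G_6$ directly), then invoke Lemma~\ref{lem:smoothbranches} and a genus count. Your determination of the singularity type via the $\delta$-invariant identity $\delta=(B_1\cdot B_2)_p$ for two smooth branches is slightly cleaner than the paper's case analysis on the splitting $(\beta_1,\beta_2)$ of $C_0|_\Delta$, and your degree and $C_0|_\Delta=6p$ computations via intersection numbers replace the paper's explicit equation-reading, but these are stylistic rather than substantive differences.
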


\begin{proof}
We know by \color{purple}(\ref{eq:phiinverse}) \color{black}that the expression of $\varphi^{-1}$ is
$$
\varphi^{-1}([a_0:a_1:a_2:b]) = [\sqrt{a_0}:a_1:\sqrt{a_0}a_2:b] \in \bbP
$$
Let $\Sigma$ be the direct image of $S$ under $\varphi$; it is the proper transform of $S$ by $\varphi^{-1}$. From the above, we identify the exceptional locus of $\varphi^{-1}$ as $\left\{ a_0 = 0\right\}$ and the pullback to $\bbP'$ of the general $12$-ic surface $S$ is
$$
(\varphi^{-1})^*S = \left\{ a_0^6(a_0f_5(\mathbf a,b) + \lambda a_1^6 + \mu a_1^3b + \gamma b^2) = 0 \right\}
$$
where $f_5(\mathbf a,b)$ is a quintic on $\bbP'$ and $\lambda,\mu,\gamma$ are constant. Hence the proper transform $\Sigma$ of $S$ is a nongeneral sextic of $\bbP' = \bbP(1,1,1,3)$ of equation
$$
a_0f_5(\mathbf a,b) + \lambda a_1^6 + \mu a_1^3b + \gamma b^2 = 0.
$$
On the one hand, we have $h^0(\bbP,\Oo_\bbP(-K_\bbP)) = 27$, while $h^0(\bbP',\Oo_{\bbP'}(5)) + 3 = 30$, so even the quintic $f_5$ can't be general, and it must belong to a subspace $V\subset H^0(\bbP',\Oo_{\bbP'}(5))$ with $\dim V = \dim H^0(\bbP',\Oo_{\bbP'}(5))-3 = 24$. Namely, it follows from the expression of $\varphi^{-1}$ that $f_5$ does not involve the monomials $a_2^5,a_2^4a_1$ and $a_2^3a_1^2$.

As $C$ is a hyperplane section of $S$ in $\bbP^7$, it is cut out on $S$ by a general cubic $\alpha(u_0,u_1,v,s_0,s_1) = 0$ in $\bbP(1,1,2,3,3)$, with
$$
[u_0:u_1:v:s_0:s_1] = [a_0:a_1:a_0a_2:a_0a_2^2:b],
$$
and thus its image $C_0$ in $\bbP'$ is cut out on $\Sigma$ by a nongeneral cubic of the form 
$$
b = \tau a_1^3 + a_0q(\mathbf a)
$$
with $\tau$ a constant and $q$ a general quadric.
$$
C_0 = \Sigma \cap \left\{ b = \tau a_1^3 + a_0q(\mathbf a) \right\} = \left\{ \begin{array}{l}
a_0f_5(\mathbf a,b) + \lambda a_1^6 + \mu a_1^3b + \gamma b^2 = 0 \\
b = \tau a_1^3 + a_0q(\mathbf a)
\end{array} \right\}.
$$
This makes $C_0$ a sextic curve in $\bbP^2$ with coordinates $[a_0:a_1:a_2]$ such that $C_0|_{a_0=0} = (a_1^6)$. Hence the restriction of $C_0$ to the line $\Delta = \left\{ a_0=0 \right\}$ is a sextic point. This point is the intersection point of $C_0$ with $\Delta$, i.e., the point $p = \left\{ a_0 = a_1 = 0 \right\}$.

By \color{purple}Lemma \ref{lem:smoothbranches}\color{black}, the curve $C_0$ has a unique singular point which is the intersection of two local branches, and the morphism from $C$ to $C_0$ is finite and birational. Since it is smooth, we deduce that $C$ is the normalization of $C_0$. Moreover, the curve $C_0$ has two smooth local branches at $p$, say $B_1$ and $B_2$, such that 
$$
6p = C_0|_\Delta = B_1|_\Delta + B_2|_\Delta.
$$
This implies $B_i|_\Delta = \beta_ip$ with $\beta_i \in \mathbf N$ for $i = 1,2$, and $\beta_1 + \beta_2 = 6$. There are three cases to distinguish.
\begin{enumerate}
\item[$(i)$] If $(\beta_1,\beta_2) = (1,5)$, then $C_0$ is a sextic plane curve with a node at $p$, and $g(C) = \frac{(6-1)(6-2)}{2} - 1 = 9$. But $g(C) = 7$ by \color{purple}\hyperlink{Table 2}{Table 2}\color{black}.
\item[$(ii)$] If $(\beta_1,\beta_2) = (2,4)$, then $C_0$ has a tacnode at $p$, and $g(C) = \frac{(6-1)(6-2)}{2} - 2 = 8$, which is also a contradiction.
\item[$(iii)$] If $(\beta_1,\beta_2) = (3,3)$, then $C_0$ has an oscnode at $p$, and we indeed have $g(C) = \frac{(6-1)(6-2)}{2} - 3 = 7$.
\end{enumerate}
The conclusion follows that $p$ is an oscnodal point of $C_0$, as required.
\end{proof}

\subsection{$\bbP = \bbP(1,3,8,12)$}

As stated in \color{purple}\hyperlink{Table 2}{Table 2}\color{black}, the curve $C$ has genus $7$. It follows from \color{purple}\hyperlink{Table 5}{Table 5} \color{black}that $S$ is isomorphic to the $9$-ic hypersurface $u_0f_8(\mathbf u,v,s) + v^3 = 0$ in $\bbP(1,1,3,4)$ with coordinates $[u_0:u_1:v:s]$ and $\Gamma$ is cut out on $S$ by a degree $8$ hypersurface of $\bbP(1,1,3,4)$. The index $i_S$ is equal to 2, therefore $C = \frac{1}{2}\Gamma$ in $\mathrm{Pic}(S)$ and $C$ is the intersection of $S$ with a general quartic. Such a quartic has equation $s = \alpha(\mathbf u,v)$, where $\deg \alpha = 4$. Hence $C$ is cut out by the equations
$$
u_0f_8(\mathbf u,v,s) + v^3 = 0,\, s = \alpha(\mathbf u,v).
$$

\begin{lemma} \label{lem:trigonal genus 7}
The curve $C$ is isomorphic to a degree $9$ curve in $\bbP(1,1,3)$, i.e., a cubic section of the cone over a rational normal cubic curve, with an inflection point of order $3$ along a line of the ruling, i.e., there is a line $\Delta \in |\Oo_{\bbP(1,1,3)}(1)|$ which is tangent to $C$ at a point $p$, and $C|_\Delta = 3p$. In particular, $C$ is a trigonal curve of genus $7$ with a total ramification point.

Conversely, any such curve in $\bbP(1,1,3)$ is isomorphic to a member of $|-\frac{1}{2}K_\bbP|_{S'}|$ for a K3 surface $S'\in |-K_\bbP|$.
\end{lemma}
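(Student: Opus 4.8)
The plan is to mimic the treatment of case \#9 in \color{purple}Lemma \ref{lem:plane quintic inflection}\color{black}. As $\alpha$ is homogeneous of degree $4$, the coordinate point $p_s=\{u_0=u_1=v=0\}$ does not lie on $C$, so the projection $[u_0:u_1:v:s]\mapsto[u_0:u_1:v]$ restricts to a closed embedding of $C$ onto the degree $9$ hypersurface
$$
u_0\,h_8(\mathbf u,v)+v^3=0,\qquad h_8(\mathbf u,v):=f_8\bigl(\mathbf u,v,\alpha(\mathbf u,v)\bigr),
$$
of $\bbP(1,1,3)_{[u_0:u_1:v]}$, that is, a cubic section of the cone $\bbP(1,1,3)$ over the rational normal cubic curve. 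One checks that $C$ avoids the vertex $p_v=[0:0:1]$ of the cone (the equation takes the value $v^3=1$ there), and that $C$ is smooth by Bertini's theorem, being a general hyperplane section of $S$, which has only isolated singularities. Restricting the equation of $C$ to the ruling line $\Delta=\{u_0=0\}\in|\Oo_{\bbP(1,1,3)}(1)|$ leaves $v^3=0$, so $C|_\Delta=3p$ with $p=\{u_0=v=0\}$; since $C$ is smooth at $p$ the intersection multiplicity $3>1$ forces $\Delta$ to be tangent to $C$ there, which is the asserted total inflection. Finally, $|\Oo_{\bbP(1,1,3)}(1)|$ restricts on $C$ to a base-point-free $g^1_3$ (of degree $\Oo_{\bbP(1,1,3)}(1)\cdot C=3$, base-point-free because its only base point $p_v$ lies off $C$) with the totally ramified member $3p$, so $C$ is trigonal with a total ramification point. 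Its genus is $7$ by \color{purple}\hyperlink{Table 2}{Table 2}\color{black} (equivalently, by adjunction $K_C=\Oo_{\bbP(1,1,3)}(9-5)|_C=\Oo(4)|_C$, of degree $12=2g-2$).

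For the converse, let $C'\subset\bbP(1,1,3)$ be a smooth degree $9$ curve carrying a ruling line $\Delta'$ with $C'|_{\Delta'}=3p$. Choosing coordinates with $\Delta'=\{u_0=0\}$, the restriction of the defining equation of $C'$ to $\Delta'\cong\bbP(1,3)$ is a cubic in $v/u_1^3$ supported at the single point $p$, hence a cube; applying the automorphism $v\mapsto v-\tau u_1^3$ of $\bbP(1,1,3)$ and rescaling, the equation of $C'$ becomes $u_0\,g_8(\mathbf u,v)+v^3=0$ with $\deg g_8=8$. I then lift $C'$: introduce the weight $4$ coordinate $s$, choose a quartic $\alpha(\mathbf u,v)$, a quartic $f_4(\mathbf u,v)$ and a nonzero constant $c_0$, and set
$$
f_8(\mathbf u,v,s):=\bigl(g_8(\mathbf u,v)-\alpha\,f_4-c_0\alpha^2\bigr)+s\,f_4(\mathbf u,v)+c_0\,s^2,
$$
so that $f_8(\mathbf u,v,\alpha(\mathbf u,v))=g_8$. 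Then $S':=\{u_0\,f_8(\mathbf u,v,s)+v^3=0\}\subset\bbP(1,1,3,4)$ is a $9$-ic of the shape defining the members of $\mathcal L=\varphi(|-K_\bbP|)$ (see \color{purple}\hyperlink{Table 5}{Table 5}\color{black}), and $C'=S'\cap\{s=\alpha(\mathbf u,v)\}$ is cut by an element of $|\Oo_{\bbP(1,1,3,4)}(4)|$. By the relation between the degrees on $\bbP(1,1,3,4)$ and the divisor classes on $\bbP$ recorded in \hyperlink{section 3}{Section 3}, one has $\Oo_{\bbP(1,1,3,4)}(4)|_{S'}=-\tfrac12 K_\bbP|_{S'}$; since moreover $h^0(\bbP(1,1,3,4),\Oo(4))=8=1+g(C)$ and the restriction to $S'$ is injective, the subsystem $|\Oo_{\bbP(1,1,3,4)}(4)|_{S'}|$ equals $|-\tfrac12 K_\bbP|_{S'}|$. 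Because $\varphi$ restricts to an isomorphism on the general anticanonical divisor of $\bbP$, the surface $S'$ is isomorphic to such a divisor, whence $C'\in|-\tfrac12 K_\bbP|_{S'}|$, as required.

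I expect the delicate step to be the genericity bookkeeping in the converse. The $9$-ics produced by the lifting recipe form a proper subfamily of $|-K_\bbP|$ — a dimension count analogous to the one at the end of \color{purple}Lemma \ref{lem:plane sextic oscnode}\color{black} shows $h^0(\bbP,-K_\bbP)$ is strictly smaller than the naive number of parameters $(g_8,\alpha,f_4,c_0)$ — so one cannot directly invoke genericity of $S'$ inside $|-K_\bbP|$. Instead one must verify by hand that for a generic choice of $(\alpha,f_4,c_0)$ with $c_0\neq0$ the $9$-ic $S'$ has only ADE singularities (in particular an $A_3$ at $p_s$, compatibly with $\mathrm{Sing}(S)=2A_2+A_3$ in \color{purple}\hyperlink{Table 2}{Table 2}\color{black}), that $C'$ lies in its smooth locus (it avoids $p_s$ because $\alpha$ has no constant term), and that $S'$ meets none of the loci where $\varphi^{-1}$ fails to be an isomorphism; I anticipate these checks proceed exactly as for cases \#9 and \#10.
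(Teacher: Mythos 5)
Your argument is correct and coincides with the paper's proof in both directions: the forward part realizes $C$ as $u_0h_8(\mathbf u,v)+v^3=0$ in $\bbP(1,1,3)$ and reads off the total inflection along $\{u_0=0\}$, and the converse lifts $h'_8$ to an $8$-ic $f'_8(\mathbf u,v,s)$ with $f'_8|_{s=\alpha}=h'_8$ so that $C'$ is cut by a quartic on the $9$-ic $S'=\{u_0f'_8+v^3=0\}\in\mathcal L$, exactly as in the paper (which then concludes via the moving part of $\mathcal L|_{S'}$ rather than your $h^0$ count, an immaterial difference). The only real divergence is your closing worry, which rests on a miscount: unlike case \#11 (where the model map is not birational and the comparison $27<30$ genuinely constrains the lift), here the system $\{u_0f_8(\mathbf u,v,s)+\lambda v^3=0\}$ has projective dimension exactly $26=h^0(\bbP,-K_\bbP)-1$, so it is all of $\mathcal L$; moreover the pullback of $u_0f_8+v^3$ to $\bbP$ is $x^3\bigl(z^3+f_8(x^3,y,xz,w)\bigr)$, whose proper transform has nonzero $z^3$-coefficient and hence avoids the indeterminacy point $p_z$, so \emph{every} surface of this shape is automatically the isomorphic image of an anticanonical divisor of $\bbP$. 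The only genuine residual check is that a generic lift $f'_8$ makes $S'$ a K3 with ADE singularities containing $C'$ in its smooth locus, a point the paper leaves implicit as well.
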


\begin{proof}
By the above equations, $C$ is naturally realized as the curve of degree $9$ in $\bbP(1,1,3)$ given by the following
$$
u_0h_8(\mathbf u,v) + v^3 = 0
$$
where $\deg h_8 = 8$. Let $\Delta$ be the line $\left\{ u_0 = 0\right\}$, then $C|_\Delta = 3p$ where $p$ is the point $\left\{ u_0 = v = 0\right\}$. The tangent cone of $C$ at $p$ is the reduced line $\Delta$, hence $C$ is smooth and has an inflection point of order $3$ at $p$.

Conversely, let $C' \subset \bbP(1,1,3)$ be such a curve. Then for a fitting choice of coordinates, it is cut out by an equation of the form 
$$
u_0h'_8(\mathbf u,v) + v^3 = 0
$$
with $\deg h'_8 = 8$. We introduce a coordinate $s$ of weight $4$ and a homogeneous degree $8$ polynomial $f'_8(\mathbf u,v,s)$ on $\bbP(1,1,3,4)$ such that $f'_8(\mathbf u,v,s)|_{s = \alpha(\mathbf u,v)} = h'_8(\mathbf u,v)$. In this setting, $C'$ is the complete intersection in $\bbP(1,1,3,4)$ given by
$$
u_0f'_8(\mathbf u,v,s) + v^3 = 0,\, s = \alpha(\mathbf u,v)
$$
meaning it is cut out by a general quartic on the surface $S' = \left\{ u_0f'_8(\mathbf u,v,s) + v^3 = 0\right\}$. Recall from \color{purple}\hyperlink{Table 4}{Table 4} \color{black}and \color{purple}\hyperlink{Table 5}{Table 5} \color{black}that the birational map $\varphi$ from $\bbP$ to $\bbP(1,1,3,4)$ restricts to an isomorphism on the general anticanonical divisors of $\bbP$. Here $S'$ is a member of $\mathcal L = \varphi(|-K_\bbP|)$, hence it is isomorphic to a general member of $|-K_\bbP|$, and furthermore the moving part of $\mathcal L|_{S'}$ is the restriction to $S'$ of the $8$-ics of $\bbP(1,1,3,4)$. As a result, $2C$ is a member of the moving part of $\mathcal L|_{S'}$, i.e., it is a hyperplane section of $S'$ in $\bbP^{25}$. Hence $2C = -K_\bbP|_{S'}$ in $\mathrm{Pic}(S')$, and the conclusion follows that $C = -\frac{1}{2}K_\bbP|_{S'}$.
\end{proof}

\subsection{$\bbP = \bbP(1,6,14,21)$}

We know from \color{purple}\hyperlink{Table 5}{Table 5} \color{black}that $S$ is a heptic hypersurface in $\bbP(1,1,2,3)$ with coordinates $[u_0:u_1:v:s]$ of equation $u_0f_6(\mathbf u,v,s) + u_1^7 = 0$. In this case, the index $i_S$ is equal to $1$, hence $C$ and $\Gamma$ are two curves of  genus $22$ which represent the same Cartier divisor on $S$, which is cut out by a general sextic of $\bbP(1,1,2,3)$. Such a sextic is smooth by generality, since $\bbP(1,1,2,3)$ has only two isolated singularities and the linear system of its sextics doesn't have base points. Moreover, the general sextic is a double cover of $\bbP(1,1,2)$ ramified over a general curve of degree $6$. It is indeed given by an equation of the form $s^2 = h_6(\mathbf u,v)$ with $\deg(h_6) = 6$, and the ramification locus in $\bbP(1,1,2)$ is the curve $h_6(\mathbf u,v) = 0$. This sextic of $\bbP(1,1,2,3)$ is thus a Del Pezzo surface of degree $1$ and we shall denote it by $DP_1$. In particular, it can be obtained from $\bbP^2$ by blowing up $8$ general points.

\begin{lemma}
The curve $C$ is the blowup of a degree $21$ plane curve $C_0$ at $8$ heptuple points $p_1,...,p_8$. Moreover, if $p$ is the ninth base points of the pencil $\mathcal P$ of plane cubics through the points $p_i$, then there exists $\gamma$ a member of $\mathcal P$ such that
$$
C_0|_\gamma = 7p + 7p_1 + \cdots + 7p_8.
$$
Conversely, the proper transform in $DP_1$ by the blowup map $DP_1 \to \bbP^2$ of any such plane curve $C_0$ of degree $21$ is isomorphic to a member of $|-K_\bbP|_{S'}|$ for a K3 surface $S'\in |-K_\bbP|$.
\end{lemma}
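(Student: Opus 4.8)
The plan is to exploit the realization of $S$ inside $\bbP' = \bbP(1,1,2,3)_{[u_0:u_1:v:s]}$ recorded in \hyperlink{Table 5}{Table 5}, namely $S = \{u_0 f_6(\mathbf u,v,s) + u_1^7 = 0\}$, together with the fact that the divisor $C = -K_\bbP|_S$ is cut out on $S$ by a general sextic $T$ of $\bbP'$, so that $C = S \cap T$. First I would check that, for $T$ general, the surface $T$ is smooth and misses the two singular points of $\bbP'$, and that $-K_T = (-K_{\bbP'} - T)|_T = \Oo_{\bbP'}(1)|_T$ satisfies $(-K_T)^2 = \Oo_{\bbP'}(1)^2 \cdot \Oo_{\bbP'}(6) = 1$, so that $T$ is the degree-one del Pezzo surface $DP_1$. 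Hence $DP_1$ carries a blow-down $\pi \colon DP_1 \to \bbP^2$ contracting eight $(-1)$-curves $E_1,\dots,E_8$ over points $p_1,\dots,p_8$ in general position, with $-K_{DP_1} = 3H - \sum_i E_i$ for $H = \pi^*\Oo_{\bbP^2}(1)$. Since $C = S \cap T$ with $S \in |\Oo_{\bbP'}(7)|$, we get $C \in |\Oo_{\bbP'}(7)|_T| = |-7K_{DP_1}| = |21H - 7\sum_i E_i|$; thus its image $C_0 := \pi(C)$ is a plane curve of degree $21$, $C$ is the proper transform of $C_0$, and $C_0$ has multiplicity exactly $7$ at each $p_i$. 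Comparing $g(C) = 22$ (\hyperlink{Table 2}{Table 2}) with the arithmetic genus $190$ of a plane $21$-ic forces $\sum_i \delta_{p_i} = 168 = 8\binom{7}{2}$, so each $p_i$ is an ordinary heptuple point and $C_0$ has no further singularity.

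For the ``moreover'' part I would identify the anticanonical pencil $|-K_{DP_1}| = |\Oo_{\bbP'}(1)|_T|$ with the pencil $\mathcal P$ of proper transforms of the plane cubics through $p_1,\dots,p_8$. Its base locus is the single reduced point $q := T \cap \{u_0 = u_1 = 0\}$; since for general $p_i$ the strict transforms of the cubics through the $p_i$ meet each $E_i$ in varying points, $q$ lies off $\bigcup_i E_i$ and $\pi$ maps it isomorphically onto the ninth base point $p$ of $\mathcal P$. Taking $\gamma := T \cap \{u_0 = 0\} \in \mathcal P$ and restricting the equation of $S$ to $\{u_0 = 0\}$, which leaves $u_1^7 = 0$, gives $C\cdot\gamma = 7\,\bigl(T \cap \{u_0 = u_1 = 0\}\bigr) = 7q$ on $T$. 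As $C$ and $\gamma$ meet only at $q$ while $C$ (resp.\ $\gamma$) meets each $E_i$ with multiplicity $7$ (resp.\ $1$), pushing forward along $\pi$ yields $C_0|_{\gamma_0} = 7p + 7p_1 + \dots + 7p_8$, where $\gamma_0 = \pi(\gamma)$.

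For the converse, I start from such a plane $21$-ic $C_0$, with ordinary heptuple points at eight general points $p_1,\dots,p_8$, and a cubic $\gamma_0 \in \mathcal P$ with $C_0|_{\gamma_0} = 7p + \sum_i 7p_i$. Form $DP_1 = \mathrm{Bl}_{p_1,\dots,p_8}\bbP^2$ (a genuine del Pezzo, the $p_i$ being general) and embed it anticanonically in $\bbP'$ as a sextic $T$; choose the weight-one coordinates $u_0,u_1$ (a basis of $H^0(DP_1,-K_{DP_1})$) so that $\{u_0=0\}|_T$ is the proper transform $\gamma$ of $\gamma_0$, whence $q := T\cap\{u_0=u_1=0\}$ is the base point of $|-K_{DP_1}|$ and $\pi(q)=p$. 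Let $C$ be the proper transform of $C_0$; then $C \in |21H - 7\sum_i E_i| = |\Oo_{\bbP'}(7)|_T|$, so $C = T \cap \{G = 0\}$ for some heptic $G$ on $\bbP'$. The crucial step is to normalise $G$ to the shape $u_1^7 + u_0 f_6$: the intersection hypothesis is precisely that $C\cdot\gamma = 7q$ on $T$, so $G|_\gamma$ and $u_1^7|_\gamma$ cut out the same divisor $7q$ on the integral cubic $\gamma$ and are therefore proportional, $G|_\gamma = c\,u_1^7|_\gamma$; consequently $G - c\,u_1^7$ vanishes on $\{u_0=0\}|_T$ and so coincides, modulo the defining ideal of $T$, with $u_0 f_6$ for a sextic $f_6$, while $c \neq 0$ since otherwise $C = \{G=0\}|_T$ would contain $\gamma$, contradicting the irreducibility of $C$. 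Rescaling to $c = 1$, we obtain $C = T \cap S'$ with $S' := \{u_1^7 + u_0 f_6 = 0\} \in \mathcal L = \varphi(|-K_\bbP|)$; by the construction of the birational model in \hyperlink{section 3}{Section 3} the surface $S'$ is isomorphic to a general anticanonical divisor of $\bbP$, and $C$, being cut on $S'$ by a sextic, lies in $|\Oo_{\bbP'}(6)|_{S'}| = |-K_\bbP|_{S'}|$.

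\textbf{Main obstacle.} The heart of the argument is the normalisation of $G$ into the form $u_1^7 + u_0 f_6$ in the converse: this is exactly where the condition $C_0|_{\gamma_0} = 7p + \sum_i 7p_i$ is used essentially, and it rests on $\gamma$ being integral, which holds because for general $p_i$ every member of $\mathcal P$ is irreducible. The remaining delicate points are the various transversality checks---that $T$ avoids $\mathrm{Sing}(\bbP')$, that $q$ is a reduced point disjoint from $\bigcup_i E_i$, and that $\{u_1=0\}$ meets $\gamma$ transversally at $q$---which legitimise the multiplicity bookkeeping used above.
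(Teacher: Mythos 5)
Your proposal is correct and follows essentially the same route as the paper: realize $DP_1$ as the general sextic of $\bbP(1,1,2,3)$ embedded anticanonically, compute $C=-7K_{DP_1}=21H-7\sum E_i$ to get the plane $21$-ic with heptuple points, obtain the contact condition $C_0|_\gamma=7p+7p_1+\cdots+7p_8$ by restricting $u_0f_6+u_1^7$ to $\{u_0=0\}$, and in the converse lift $C'$ to a heptic of $\bbP(1,1,2,3)$ and normalise it to the shape $u_0f_6'+u_1^7$ modulo the ideal of the sextic, landing on a member of $\mathcal L=\varphi(|-K_\bbP|)$. The only addition is your genus/$\delta$-invariant count pinning down that the eight heptuple points account for all singularities, which the paper omits and the statement does not require.
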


\begin{proof}
Let $\varepsilon : DP_1 \to \bbP^2$ be the blowup map, $H= \varepsilon^*\Oo_{\bbP^2}(1)$ the pullback of the lines and $E_i$ the exceptional curve over $p_i$, $i\in \{ 1,...,8 \}$. On the one hand, by the discrepancy of $\varepsilon$, we have $-K_{DP_1} = 3H - \sum_{i=1}^8 E_i$. On the other hand, the adjunction formula yields $-K_{DP_1} = \Oo_{\bbP(1,1,2,3)}(1)|_{DP_1}$. Since $C = DP_1 \cap S$, where $S$ is a heptic in $\bbP(1,1,2,3)$, it holds that 
$$
C = -7K_{DP_1} = 21H - \sum_{i=1}^8 7E_i
$$ 
and thus it is the proper transform of a degree $21$ curve $C_0$ in $\bbP^2$ which passes through the points $p_i$, each with multiplicity $7$.

The curve $C$ is given by the two following equations.
$$
u_0f_6(\mathbf u,v,s) + u_1^7 = g_6(\mathbf u,v,s) = 0
$$
for $g_6$ a general homogeneous sextic polynomial, so that $g_6 = 0$ is the defining equation of $DP_1$. The base point of $-K_{DP_1} = \mathcal O_{\bbP(1,1,2,3)}(1)|_{DP_1}$ is the intersection point of $DP_1$ with the locus $\left\{ u_0 = u_1 = 0\right\}$, which we denote by $p$. Let $B$ be the curve $DP_1 \cap \left\{ u_0 = 0 \right\}$. It is an anticanonical curve of $DP_1$ and by the equations above we have 
$$
C|_B = (DP_1\cap \left\{ u_0f_6(\mathbf u,v,s)+u_1^7 = 0 \right\})|_{u_0=0} = DP_1|_{u_0 = 0} \cap (u_1^7)|_{u_0=0} = 7p.
$$
If $\gamma = \varepsilon(B)$, which is a plane cubic through $p,p_1,...,p_8$, then the above implies that $C_0|_\gamma = 7p + 7p_1 + \cdots + 7p_8$.

Conversely, if $C_0'$ is a plane $21$-ic curve, then its blowup $C'\subset DP_1$ at the points $p_i$ is in the Cartier class $-7K_{DP_1}$, and by the surjectivity of
$$
H^0 (\bbP(1,1,2,3),\Oo_{\bbP(1,1,2,3)}(7)) \twoheadrightarrow H^0(DP_1,\Oo_{DP_1}(-7K_{DP_1}))
$$
which follows from the restriction short exact sequence 
$$
0 \to \Oo_{\bbP(1,1,2,3)} \to \Oo_{\bbP(1,1,2,3)}(7) \to \Oo_{DP_1}(-7K_{DP_1}) \to 0
$$
and the vanishing $h^1(\Oo_{\bbP(1,1,2,3)}) = 0$, we have $C' = S' \cap DP_1$ where $S'$ is a heptic surface in $\bbP(1,1,2,3)$. It follows that $C'$ in $\bbP(1,1,2,3)$ has equations
$$
f'_7(\mathbf u,v,s) = g_6(\mathbf u,v,s) = 0.
$$
Besides, there exists $B'$ an anticanonical curve of $DP_1$ such that $C'|_{B'} = 7p$. We may choose the coordinates $[u_0:u_1:v:s]$ on $\bbP(1,1,2,3)$ such that $B' = DP_1 \cap \left\{ u_0 = 0 \right\}$. This yields
$$
f'_7(\mathbf u,v,s)|_{u_0 = g_6(\mathbf u,v,s) = 0} = u_1^7.
$$
In other words, $f'_7 = u_1^7 + \lambda \alpha(u_0,u_1)g_6(\mathbf u,v,s) (\text{mod } u_0)$ for some constant $\lambda$ and $\deg \alpha = 1$. We may choose $\lambda = 0$, which does not change $C'$ and realizes it as a complete intersection in $\bbP(1,1,2,3)$ of the form
$$
u_0f'_6(\mathbf u,v,s) + u_1^7 = g_6(\mathbf u,v,s) = 0
$$
with $\deg f'_6 = 6$. Thus $C'$ lies on the surface $S' = \left\{ u_0f'_6(\mathbf u,v,s) + u_1^7 = 0 \right\}$. It is a member of $\mathcal L = \varphi(|-K_\bbP|)$, for $\varphi$ the birational map displayed in \color{purple}\hyperlink{Table 4}{Table 4}\color{black}. Therefore $S'$ is isomorphic to a general member of $|-K_\bbP|$ and $C' = -K_\bbP|_{S'}$.
\end{proof}

\subsection{$\bbP = \bbP(2,3,10,15)$}

The index $i_S$ is equal to $1$, meaning that both $C$ and $\Gamma$ represent the same Cartier divisor on $S$. The curve $C$ is then the intersection of $\bbP$ in $\bbP^{17}$ with two general hyperplanes. Recall from \color{purple}(\ref{eq:2 3 10 15 model}) \color{black}that $\bbP$ is realized as a complete intersection in $\bbP(1^2,2,3,5^2)$ with coordinates $[U_0:U_1:V:W:X_0:X_1]$ of equations 
$$
U_0X_0 = W^2,\, U_1X_1 = V^3
$$
and that its hyperplane sections in $\bbP^{17}$ are its sections by the quintics of $\bbP(1^2,2,3,5^2)$. To lighten the notation, let us use lower case letters instead of upper case ones to designate the coordinates, as there is no risk of confusion here.

The equations that cut out the curve $C$ in $\bbP$ are thus general quintics of $\bbP(1^2,2,3,5^2)$, and by the generality assumption we may choose them to be $x_0 = f_5(\mathbf u,v,w)$ and $x_1 = h_5(\mathbf u,v,w)$ where $f_5$ and $h_5$ are homogeneous of degree $5$.

This makes $C$ the curve in $\bbP(1,1,2,3)$ given by the two following sextic equations
$$
v^3 = u_1h_5(\mathbf u,v,w),\, w^2 = u_0f_5(\mathbf u,v,w).
$$
In other words, $C$ is the intersection of the two sextic surfaces $\Sigma = \left\{ v^3 = u_1h_5(\mathbf u,v,w) \right\}$ and $\Theta = \left\{ w^2 = u_0f_5(\mathbf u,v,w)\right\}$.

\begin{lemma}
\label{lem:Sigmabirational}
Consider the map $\varepsilon : \bbP(1,1,2,3) \dashrightarrow \bbP(1,1,2)$ given by 
$$
[u_0:u_1:v:w] \mapsto [u_0:u_1:v].
$$
The restriction of $\varepsilon$ to $\Sigma$ yields a birational map $\Sigma \dashrightarrow \bbP(1,1,2)$ which contracts the rational curve $\mathfrak f = \left\{ u_1 = v = 0 \right\}$ to the point $p_{u_0} = [1:0:0] \in \bbP(1,1,2)$. In addition, the restriction of $\varepsilon$ to $\Sigma - \mathfrak f$ is an isomorphism onto its image.
\end{lemma}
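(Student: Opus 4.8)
The plan is to exploit a numerical accident in the weights: since $\deg w = 3$, any monomial $u_0^a u_1^b v^c w^d$ of degree $5$ has $d\le 1$, so $h_5$ is affine-linear in $w$. Write
$$h_5(\mathbf u,v,w)=w\,\ell(\mathbf u,v)+m(\mathbf u,v),$$
where $\ell$ is a quadratic form and $m$ a quintic form in $u_0,u_1,v$, both general since $h_5$ is. The equation of $\Sigma$ then becomes $u_1\,\ell(\mathbf u,v)\,w = v^3-u_1\,m(\mathbf u,v)$, which solves $w$ rationally in terms of $u_0,u_1,v$ on $\Sigma$. First I would use this to write down the candidate inverse
$$\varepsilon^{-1}\colon [u_0:u_1:v]\longmapsto \Bigl[u_0:u_1:v:\tfrac{v^3-u_1 m(\mathbf u,v)}{u_1\,\ell(\mathbf u,v)}\Bigr],$$
a rational map $\bbP(1,1,2)\dashrightarrow\Sigma$, and check that $\varepsilon|_\Sigma\circ\varepsilon^{-1}$ and $\varepsilon^{-1}\circ\varepsilon|_\Sigma$ are the identity on the open set $\{u_1\ell\neq 0\}$; the only nontrivial point in this verification is that $\varepsilon^{-1}\circ\varepsilon|_\Sigma=\mathrm{id}$, which is exactly the defining equation of $\Sigma$. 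This proves $\varepsilon|_\Sigma$ is birational. (Equivalently: for general $\bar q=[a:b:c]\in\bbP(1,1,2)$ the restriction of the equation of $\Sigma$ to the fibre $\{[a:b:c:w]\}\cong\bbP^1$ of $\varepsilon$ is linear in $w$ with leading coefficient $b\,\ell(a,b,c)\neq 0$, so that fibre meets $\Sigma$ in exactly one point.)

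Next I would identify the contracted curve. The map $\varepsilon$ is a morphism away from $p_w=[0:0:0:1]$, and $p_w\in\mathfrak f$, so $\varepsilon|_{\Sigma-\mathfrak f}$ is a morphism. Restricting the equation of $\Sigma$ to $\{u_1=0\}$ gives $v^3=0$, hence $\Sigma\cap\{u_1=0\}=\{u_1=v=0\}=\mathfrak f$ (with multiplicity $3$); in particular every point of $\Sigma-\mathfrak f$ has $u_1\neq 0$. On the other hand $\mathfrak f$ is exactly the fibre of $\varepsilon$ over $p_{u_0}=[1:0:0]$, and it lies on $\Sigma$ because setting $u_1=v=0$ annihilates both sides of the equation; so $\varepsilon$ contracts $\mathfrak f$ to $p_{u_0}$, as claimed.

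For the last assertion — that $\varepsilon|_{\Sigma-\mathfrak f}$ is an isomorphism onto its image — I would argue that over a point $\bar q=\varepsilon(P)$ with $P\in\Sigma-\mathfrak f$ one has $u_1\neq 0$, and as long as $\ell(\mathbf u,v)\neq 0$ at $\bar q$ the coordinate $w$ is uniquely recovered by the formula for $\varepsilon^{-1}$, so $\varepsilon|_{\Sigma-\mathfrak f}$ is injective there with a regular inverse. The delicate point, and the one I expect to be the main obstacle, is the locus $\{\ell=0\}$: there the coefficient of $w$ in the equation of $\Sigma$ vanishes, so a priori a whole ruling line of the projection could drop into $\Sigma$. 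Controlling this is where the generality of $h_5$ (equivalently of $\ell$ and $m$) must be brought to bear — one has to analyse $\Sigma\cap\{\ell=0\}$, which by the equation is the preimage of the plane curve $\{\ell=0\}\cap\{v^3=u_1 m\}\subset\bbP(1,1,2)$, and show it does not produce a spurious contracted curve inside $\Sigma-\mathfrak f$ (and, for the subsequent study of $C\subset\Sigma$, that $C$ avoids the corresponding points). Once this is settled, $\varepsilon^{-1}$ is regular on all of $\varepsilon(\Sigma-\mathfrak f)$ and the claimed isomorphism follows.
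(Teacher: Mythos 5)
Your treatment of the first two assertions is correct and is essentially the paper's own argument in different clothing: the paper restricts the equation of $\Sigma$ to each fibre $\bbP(1,3)_{[\mathtt u:\mathtt w]}$ of $\varepsilon$ and reads off a form $\mathtt u^3(A\mathtt u^3+B\mathtt w)$, which is exactly your observation that the equation of $\Sigma$ is affine-linear in $w$ with leading coefficient $u_1\ell(\mathbf u,v)$; solving for $w$ gives the same birational inverse. The identification of $\mathfrak f$ as the fibre over $p_{u_0}$ contained in $\Sigma$ is also fine.

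The last assertion is where your proof stops short, and the obstacle you flag is not one that genericity can remove. The curves $\{\ell=0\}$ and $\{v^3=u_1m\}$ have classes $\mathcal O_{\bbP(1,1,2)}(2)$ and $\mathcal O_{\bbP(1,1,2)}(6)$, hence intersection number $2\cdot 6\cdot\tfrac12=6>0$, so they always meet; for general $h_5$ they meet in six distinct points, none equal to $p_{u_0}$ and none on $\{u_1=0\}$. Over each such point both coefficients of the fibre equation vanish, so the entire fibre of $\varepsilon$ lies in $\Sigma$ and, apart from $p_w$, in $\Sigma-\mathfrak f$. These six fibres are contracted by $\varepsilon$, so $\varepsilon|_{\Sigma-\mathfrak f}$ is not injective and the final claim of the lemma is false as stated; what is true is that $\varepsilon$ is an isomorphism off $\mathfrak f$ and these finitely many additional fibres. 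The paper's own proof has the same blind spot --- writing the fibre equation as $\mathtt u^3(\mu\mathtt u^3+\mathtt w)$ tacitly normalizes the coefficient of $\mathtt w$ to $1$, which is impossible precisely over $\{u_1\ell=0\}$ --- so the analysis you postponed is not one the author carried out. Nor is the omission harmless for the sequel: $C=\Sigma\cap\Theta$ meets each of the six extra fibres in two points, so its image $C_0$ acquires six nodes besides the singular point $p$, as the arithmetic genus in fact demands ($C_0\in|\mathcal O_{\bbP(1,1,2)}(12)|$ has $p_a=25$, while $g(C)=16$).
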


\begin{proof}
First of all, we identify the unique indeterminacy point of $\varepsilon$ as $p_w = [0:0:0:0:1]$.

The fiber of $\varepsilon$ over a smooth point of $\bbP(1,1,2)$ (say, over $[u_0:u_1:v] \neq[0:0:1]$) is a $\bbP(1,3)$ in $\bbP(1,1,2,3)$ parametrized by
$$
[\mathtt u:\mathtt w] \in \bbP(1,3) \to [\mathtt u u_0:\mathtt u u_1:\mathtt u^2 v:\mathtt w] \in \varepsilon^{-1}([u_0:u_1:v]).
$$
In addition, in this parametrization, the indeterminacy point of $\varepsilon$ along this fibre is $[0:1]$.

It follows from the defining equation of $\Sigma$ in $\bbP(1,1,2,3)$ that its restriction to such a fiber is the locus in $\bbP(1,3)$ given by an equation of the form $\mathtt u^3(\mu \mathtt u^3+\mathtt w) = 0$, with $\mu$ a constant. Via the isomorphism 
$$
[\mathtt u:\mathtt w] \in \bbP(1,3) \mapsto [\mathtt u^3:\mathtt w] \in \bbP^1,
$$
we see that $\Sigma$ has degree $2$ on this fibre and it cuts out (transversally) two disctinct points, one of which is the indeterminacy point $p_w$ of $\varepsilon$ (which is parametrized by $[\mathtt u:w] = [0:1]$). The only exception is the particular fiber $\mathfrak f := \left\{u_1 = v = 0\right\}$, which is a $\bbP(1,3)$ with coordinates $[u_0:w]$, contained in $\Sigma$ and contracted to the smooth point $p_{u_0} \in \bbP(1,1,2)$.

Meanwhile, the fiber over the singular point $p_v \in \bbP(1,1,2)$, consists of all the points $[u_0:u_1:v:w]$ for which $(u_0,u_1) = (0,0)$ and thus is a $\bbP(2,3)$ with coordinates $[v:w]$, and the restriction of $\Sigma$ to this particular fiber is given by the equation $v^3 = 0$. Hence $\Sigma$ meets this particular fiber only at the indeterminacy point of $\varepsilon$.

Therefore, $\varepsilon|_\Sigma$ is a birational map from $\Sigma$ to $\bbP(1,1,2)$ with indeterminacy point $p_w = \left\{ u_0 = u_1 = v = 0\right\}$ and its restriction to $\Sigma - \left\{ p_w \right\}$ is a birational morphism 
$$
\Sigma - \left\{ p_w\right\} \to \bbP(1,1,2) - \left\{ p_v\right\}
$$ 
which contracts the curve $\mathfrak f$ to the point $p_{u_0}$ and is an isomorphism on $\Sigma - \mathfrak f$.
\end{proof}

\begin{lemma}
\label{lem:smoothbranches2}
At the general point $q$ of $\mathfrak f$, we have
$$
\mathrm{ker}(d_q\varepsilon) = T_q \mathfrak f.
$$
\end{lemma}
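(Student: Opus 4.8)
The plan is to reduce to a short computation in the affine chart $U=\left\{u_0\neq 0\right\}$ of $\bbP(1,1,2,3)$. Since $u_0$ has weight $1$, the point $p_{u_0}=[1:0:0:0]$ is a smooth point of $\bbP(1,1,2,3)$, so $U\cong\C^3$ with affine coordinates $(\mathtt u_1,\mathtt v,\mathtt w)$ obtained by setting $u_0=1$. In these coordinates $\mathfrak f=\left\{u_1=v=0\right\}$ becomes the $\mathtt w$-axis, and the general point $q$ of $\mathfrak f$ has $u_0\neq 0$ and $w\neq 0$, so it differs from the unique indeterminacy point $p_w$ of $\varepsilon$; hence $\varepsilon$ is a morphism near $q$, given in these charts by the linear projection $(\mathtt u_1,\mathtt v,\mathtt w)\mapsto(\mathtt u_1,\mathtt v)$, whose differential has kernel exactly the $\mathtt w$-direction, i.e.\ $T_q\mathfrak f$. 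Read as the differential of the ambient map, this already settles the statement; read as the differential of $\varepsilon|_\Sigma$, one further step is needed.

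That step is to check that the kernel does not grow upon restriction to $\Sigma$, i.e.\ that $T_q\mathfrak f\subset T_q\Sigma$ for general $q\in\mathfrak f$. Since $\mathfrak f\subset\Sigma$ by \color{purple}Lemma \ref{lem:Sigmabirational}\color{black}, this amounts to showing that $\Sigma$ is smooth at such a $q$. First I would dehomogenize the defining equation $v^3=u_1h_5(\mathbf u,v,w)$: on $U$ the surface $\Sigma$ is cut out by $\mathtt v^3-\mathtt u_1\tilde h(\mathtt u_1,\mathtt v,\mathtt w)=0$, where $\tilde h(\mathtt u_1,\mathtt v,\mathtt w)=h_5(1,\mathtt u_1,\mathtt v,\mathtt w)$, and then compute that the gradient of this equation at $q=(0,0,\mathtt w_0)$ equals $\bigl(-\tilde h(0,0,\mathtt w_0),\,0,\,0\bigr)$. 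The key point is that $\tilde h(0,0,\mathtt w)$ is the restriction of $h_5(u_0,0,0,w)$, which for general $h_5$ is the generic linear combination $au_0^5+bu_0^2w$ of the only two degree $5$ monomials in $u_0$ and $w$; thus $\tilde h(0,0,\mathtt w)=a+b\mathtt w$ is a nonzero polynomial and vanishes at only finitely many $\mathtt w_0$. Hence $\Sigma$ is smooth at the general $q\in\mathfrak f$, so $T_q\mathfrak f\subset T_q\Sigma$, and $\ker\bigl(d_q(\varepsilon|_\Sigma)\bigr)=T_q\Sigma\cap(\mathtt w\text{-direction})=T_q\mathfrak f$, as claimed.

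I do not expect a serious obstacle here: in contrast with the computation in \color{purple}Lemma \ref{lem:smoothbranches}\color{black}, no passage to a smooth cover is required, precisely because $p_{u_0}$ is already a smooth point of $\bbP(1,1,2,3)$. The only delicate ingredient is the appeal to the generality of $h_5$ to guarantee that $\tilde h(0,0,\mathtt w)$ is not identically zero, and this is immediate once one notes that $u_0^5$ and $u_0^2w$ both have degree $5$ with respect to the weights $(1,1,2,3)$.
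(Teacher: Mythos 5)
Your proof is correct and follows essentially the approach the paper intends, namely a direct computation of the differential in a local chart (the paper only sketches this by reference to Lemma \ref{lem:smoothbranches}); you rightly observe that here no passage to a cyclic cover is needed since the chart $\left\{ u_0 \neq 0 \right\}$ is already smooth, and your identification of $h_5(u_0,0,0,w)$ with a general combination of $u_0^5$ and $u_0^2w$ is accurate. (Your second step is even slightly more than necessary: $T_q\mathfrak f \subseteq T_q\Sigma$ holds automatically because $\mathfrak f \subseteq \Sigma$, so the kernel of the restricted differential cannot drop below $T_q\mathfrak f$ regardless of the smoothness of $\Sigma$ at $q$.)
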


The proof relies on a technical calculation which is similar to the one which was done in \color{purple}Lemma \ref{lem:smoothbranches}\color{black}.

\begin{lemma}
The curve $C$ is the normalization of a $1$-nodal curve $C_0$ in $\bbP(1,1,2)$ of degree $12$, i.e., a sextic section of the cone over a conic. Let $p$ be the node of $C_0$ such that $C = Bl_pC_0$, then it is a smooth point of $\bbP(1,1,2)$ and the line $\Delta \in |\Oo_{\bbP(1,1,2)}(1)|$ through $p$ is such that $C_0|_\Delta = 6p$. Furthermore, there is another line $\Delta' \in |\Oo_{\bbP(1,1,2)}(1)|$ such that $C_0$ is tri-tangent to $\Delta'$, meaning
$$
C_0|_{\Delta'} = 2p_1 + 2p_2 + 2p_3
$$
where $p_1,p_2$ and $p_3$ are general points of $\Delta'$. In other words, $C_0$ is a $6$-gonal curve of genus $16$ such that one member of the $g_6^1$ is a sextuple point, and another member consists of three double points.

Conversely, the normalization of any such $12$-ic curve $C_0 \subset \bbP(1,1,2)$ can be realized as a member of $|-K_\bbP|_{S'}|$ for a K3 surface $S'\in |-K_\bbP|$.
\end{lemma}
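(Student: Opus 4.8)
The plan is to work throughout with the realization of $C$ as the complete intersection $\Sigma\cap\Theta$ in $\bbP(1,1,2,3)_{[u_0:u_1:v:w]}$, where $\Sigma=\{v^3=u_1h_5\}$, $\Theta=\{w^2=u_0f_5\}$ for general quintics $f_5,h_5$, and with the map $\varepsilon\colon\bbP(1,1,2,3)\dashrightarrow\bbP(1,1,2)$ forgetting $w$; I set $C_0:=\varepsilon(C)$. First I would note that $C$ misses the indeterminacy point $p_w$ of $\varepsilon$ by generality, so $\varepsilon|_C\colon C\to C_0$ is a morphism, and that by \color{purple}Lemma \ref{lem:Sigmabirational}\color{black}\ it is birational onto $C_0$ and an isomorphism over $C_0\smallsetminus\{p\}$, where $p:=p_{u_0}=\varepsilon(\mathfrak f)$, a smooth point of $\bbP(1,1,2)$; hence $C$ is the normalization of $C_0$ and $C_0$ is smooth away from $p$. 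Restricting the equation of $\Theta$ to $\mathfrak f\cong\bbP(1,3)\cong\bbP^1$ gives a binary quadratic form, so $C\cap\mathfrak f=\mathfrak f\cap\Theta=\{q_1,q_2\}$ consists of two distinct points for general $f_5$; thus $C_0$ has a double point at $p$, and by \color{purple}Lemma \ref{lem:smoothbranches2}\color{black}, applied at the transverse intersections $q_i$, both local branches of $C_0$ at $p$ are smooth. So $C_0$ has a single singular point, to be described precisely below.

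Next I would pin down the numerical data. Eliminating $w$ — which occurs linearly in the equation of $\Sigma$ — yields an explicit equation $F_{12}(u_0,u_1,v)=0$ of degree $12$ for $C_0$; using the generality of $f_5,h_5$ one checks $F_{12}$ is irreducible and reduced, so $C_0$ is a $12$-ic in $\bbP(1,1,2)$, i.e.\ a sextic section of the quadric cone $\bbP(1,1,2)\subset\bbP^3$. Since $C$ avoids $\{u_0=u_1=0\}$ (because $\Sigma\cap\{u_0=u_1=0\}=\{p_w\}$ and $\Theta\cap\{u_0=u_1=0\}=\{p_v\}$ are disjoint), $C_0$ avoids the vertex $p_v$, so $p_a(C_0)=25$ by adjunction on the cone; combined with $g(C)=16$ (Table~2) this gives $\delta_p(C_0)=9$. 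Now $|\Oo_{\bbP(1,1,2)}(1)|_{C_0}|=\langle u_0,u_1\rangle$ is a pencil of degree $\Oo_{\bbP(1,1,2)}(1)\cdot C_0=6$, and it is an honest $g^1_6$ since $C$ is a canonical curve with $\mathrm{Cliff}(C)>2$ by \color{purple}Corollary \ref{cor:clifford index}\color{black}, hence carries no $g^2_6$; this exhibits $C_0$ (equivalently $C$) as $6$-gonal. For the member $\Delta:=\{u_1=0\}$, using $v^3=u_1h_5$ on $C$ one finds $u_1|_C$ vanishes to order $3$ at each $q_i$, so $C\cap\{u_1=0\}=3(q_1+q_2)$, equivalently $F_{12}|_{u_1=0}=v^6$, i.e.\ $C_0|_\Delta=6p$ with $\Delta$ passing through $p$. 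For the member $\Delta':=\{u_0=0\}$, since $\mathfrak g:=\{u_0=w=0\}\cong\bbP^1$ lies in $\Theta$ one has $C\cap\mathfrak g=\mathfrak g\cap\Sigma$, and restricting the equation of $\Sigma$ to $\mathfrak g$ gives a binary cubic, so $\mathfrak g\cap\Sigma=\{r_1,r_2,r_3\}$ with distinct $r_i$ for general $h_5$; using $w^2=u_0f_5$ on $C$, $u_0|_C$ vanishes to order $2$ at each $r_j$, so $C\cap\{u_0=0\}=2(r_1+r_2+r_3)$, and since $\mathfrak g\cap\mathfrak f=\varnothing$ the images $p_i:=\varepsilon(r_i)$ are distinct smooth points of $C_0$ with $C_0|_{\Delta'}=2p_1+2p_2+2p_3$. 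Finally, feeding $C_0|_\Delta=6p$ (so the two branches at $p$ meet $\Delta$ with multiplicities summing to $6$) into $\delta_p(C_0)=9$ forces the two branches to be tangent to $\Delta$ with multiplicity $3$ each and to have mutual contact order $9$; this is the precise meaning of the "node" $p$.

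For the converse I would reverse this construction. Given a $12$-ic $C_0'\subset\bbP(1,1,2)$ with a single double point $p$ of the above type, $C_0'|_\Delta=6p$ for the line $\Delta\ni p$ and $C_0'|_{\Delta'}=2p_1+2p_2+2p_3$ for another line $\Delta'$, and $C'$ its normalization, I would choose coordinates with $p=p_{u_0}$, $\Delta=\{u_1=0\}$, $\Delta'=\{u_0=0\}$; then the defining form $F_{12}$ satisfies $F_{12}|_{u_1=0}=cv^6$ and $F_{12}|_{u_0=0}$ is a perfect square, and the $\delta_p=9$ condition constrains $F_{12}$ further. By a construction analogous to the converses of \color{purple}Lemmas \ref{lem:plane quintic inflection}\color{black}\ and \color{purple}\ref{lem:trigonal genus 7}\color{black}\ — reintroduce a weight-$3$ coordinate $w$ and use the two distinguished members of the $g^1_6$ to recover quintics $f_5,h_5$ so that $F_{12}$ comes from $v^3=u_1h_5$, $w^2=u_0f_5$ by eliminating $w$ — this realizes $C'$ as $\Sigma'\cap\Theta'$ in $\bbP(1,1,2,3)$; reintroducing a weight-$5$ coordinate $X_1$ in place of $h_5$ puts $C'=S'\cap\{X_1=h_5\}$ with $S'=\{w^2=u_0f_5,\ u_1X_1=v^3\}$ a K3 surface in $\bbP(1^2,2,3,5)$. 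Then by the realization \color{purple}(\ref{eq:2 3 10 15 model})\color{black}\ of $\bbP$ as a complete intersection in $\bbP(1^2,2,3,5^2)$ — whose general anticanonical divisor is exactly such an $S'$ — one concludes $S'\in|-K_\bbP|$ up to isomorphism and $C'\equiv-K_\bbP|_{S'}=-\tfrac1{i_S}K_\bbP|_{S'}$ in $\mathrm{Pic}(S')$.

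The main obstacle I expect is the local analysis at $p$: extracting from the global data ($p_a(C_0)=25$, $g(C)=16$, $C_0|_\Delta=6p$) that $C_0$ has $\delta_p=9$ realized by two smooth branches of mutual contact $9$, each tangent to $\Delta$ to order $3$; and, for the converse, showing that the conditions on $F_{12}$ along $\Delta$ and $\Delta'$ together with this singularity type are not only necessary but sufficient to place $F_{12}$ in the image of the $w$-elimination map. One must also keep careful track of which genericities of $f_5,h_5$ are used — the distinctness of $q_1,q_2$ and of $r_1,r_2,r_3$, the irreducibility and reducedness of $F_{12}$, and the $p_i$ being general on $\Delta'$.
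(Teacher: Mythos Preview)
Your forward direction is essentially the paper's proof: the same use of \color{purple}Lemmas \ref{lem:Sigmabirational}\color{black}\ and \color{purple}\ref{lem:smoothbranches2}\color{black}, the same identification of $\Delta=\{u_1=0\}$ and $\Delta'=\{u_0=0\}$, and the same mechanism ($\Sigma\cap\{u_1=0\}=\mathfrak f$ for $\Delta$, and $\Theta|_{u_0=0}=2\ell$ with $\Sigma\cdot\ell=3$ for $\Delta'$). The paper computes $\deg C_0=12$ by intersection numbers rather than elimination, but this is cosmetic. Your $\delta_p=9$ computation is a genuine addition: the paper only records that $C_0$ has two smooth local branches at $p$ and does not analyse the singularity further (so the word ``nodal'' in the statement is being used loosely). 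Note however that your conclusion ``each branch tangent to $\Delta$ to order $3$'' does not follow from $\delta_p=9$ and $a+b=6$ alone; you would need to combine mutual contact order $9$ with the contact of each branch with $\Delta$ more carefully, and in any case this refinement is not needed for the lemma as stated.

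For the converse the paper takes a cleaner and more direct route than you propose. Rather than trying to invert the $w$--elimination and reconstruct both $f_5$ and $h_5$ from $F_{12}$, the paper keeps $\Sigma$ \emph{fixed} and pulls $C_0'$ back to its proper transform $C'\subset\Sigma$ via the birational map $\varepsilon|_\Sigma$. One then checks $C'\in|\mathcal O_{\bbP(1,1,2,3)}(6)|_\Sigma|$ by an intersection computation, and surjectivity of $H^0(\mathcal O_{\bbP(1,1,2,3)}(6))\to H^0(\mathcal O_{\bbP(1,1,2,3)}(6)|_\Sigma)$ produces a sextic $\Theta'$ with $C'=\Sigma\cap\Theta'$. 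At this point only the tri-tangency along $\Delta'$ is used: it forces $\Theta'|_{u_0=0}$ to be a double curve $2\ell'$, so after a change of variable $w\mapsto w+\alpha(u_1,v)$ one gets $\Theta'\colon u_0f_5'+w^2=0$. The $\Delta$--condition and the singularity at $p$ are absorbed automatically by working on the fixed $\Sigma$, so you never have to match $\delta_p$ by hand. This avoids precisely the obstacle you flag in your last paragraph.
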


\begin{proof}
Using the parametrization of $\mathfrak{f} = \mathrm{Exc}(\varepsilon|_\Sigma)$ as a $\bbP(1,3)$ with coordinates $[u_0:w]$, we know from the equations for $C$ in $\bbP(1,1,2,3)$:
$$
v^3 = u_1h_5(\mathbf u,v,w),\, w^2 = u_0f_5(\mathbf u,v,w),
$$
that the restriction $C|_{\mathfrak f}$ is cut out by an equation of the form $\tau u_0^6 + \lambda u_0^3w + w^2 = 0$, where $\lambda$ and $\tau$ are constants. Hence, the curve $C$ meets the fibre $\mathfrak f$ of $\varepsilon$ at two points, but it does not contain the indeterminacy point $p_w$ of $\varepsilon$. This implies in particular that the restriction $\varepsilon|_C$ is a regular map with image a curve $C_0 \subset \bbP(1,1,2)$. Since $\varepsilon|_\Sigma$ is birational and an isomorphism from $\Sigma - \mathfrak f$ onto its image (\color{purple}Lemma \ref{lem:Sigmabirational}\color{black}), the morphism from $C$ to $C_0$ is birational; besides, it maps the two points $C\cap \mathfrak f$ to a single point $p$. By \color{purple}Lemma \ref{lem:smoothbranches2}\color{black}, the curve $C_0$ has two smooth local branches at $p$, and $C$ is the normalization of $C_0$, since it is smooth.

Furthermore, $C_0$ is a member of $|\mathcal O_{\bbP(1,1,2)}(d)|$ for some $d$ such that
$$
\frac{d}{2} = C_0 \cdot \mathcal O_{\bbP(1,1,2)}(1) = C \cdot \mathcal O_{\bbP(1,1,2,3)}(1) = \mathcal O_{\bbP(1,1,2,3)}(6)^2\cdot \mathcal O_{\bbP(1,1,2,3)}(1) = 6.
$$
The curve $C_0$ is thus given by a degree $d = 12$ equation on $\bbP(1,1,2)$; in other words, it is a sextic section of the cone over a conic.

In particular, let $\Delta = \left\{ u_1 = 0\right\}$ be the line through $p$ in $\bbP(1,1,2)$. By the above, $C_0|_\Delta$ has degree $6$. But the intersection of $C_0$ with $\Delta$ is the image via $\varepsilon$ of $C\cap \left\{ u_1 = 0 \right\}$, and since we have $C\subset \Sigma$ and $\Sigma \cap \left\{ u_1 = 0\right\} = \left\{ u_1 = v = 0\right\} = \mathfrak f$, the curve $C_0$ has only one contact point with $\Delta$:
$$
C_0\cap \Delta = \varepsilon(C|_{u_1 = 0}) \subset \varepsilon(\mathfrak f) = \left\{ p \right\}.
$$
Hence $C_0|_\Delta = 6p$.

Now let $\Delta'$ be the line $\left\{ u_0 = 0\right\}$ in $\bbP(1,1,2)$. The restriction of $C_0$ to $\Delta'$ is the image via $\varepsilon$ of $C|_{u_0=0}$. Consider the surface
$$
\Theta = \left\{ w^2 = u_0f_5(\mathbf u,v,w)\right\}
$$
in $\bbP(1,1,2,3)$; by the fact that $C = \Sigma \cap \Theta$ and 
$$
\Theta|_{u_0 = 0} = (w^2) = 2\ell
$$
where $\ell$ is the curve $u_0=w=0$, which is a $\bbP(1,2)$ with coordinates $[u_1:v]$, and $\Sigma \cdot \ell = 3$ in $\bbP(1,1,2,3)$, we have
$$
C|_{u_0 = 0} = \Theta|_{u_0 = 0} \cap \Sigma|_{u_0 = 0} = 2\ell \cap \Sigma|_{u_0 = 0}
$$
which consists of three double points. Therefore, $C_0|_{\Delta'} = 2p_1 + 2p_2 + 2p_3$ where $p_1,p_1,p_3$ are general points of $\Delta'$. Since $C \to C_0$ is an isomorphism over $C_0 - \left\{ p \right\}$, the three contact points of $C_0$ with $\Delta'$ are tangency points.

Conversely, let $C_0'$ be such a curve in $\bbP(1,1,2)$ and $C'$ its proper transform in $\Sigma$ via the birational map
$$
\varepsilon|_\Sigma : \Sigma \dashrightarrow \bbP(1,1,2).
$$ 
As $C_0'$ is given by an equation of degree $12$, we have
$$
6 = C_0' \cdot \mathcal O_{\bbP(1,1,2)}(1) = C' \cdot \mathcal O_{\bbP(1,1,2,3)}(1) = \Sigma \cdot \mathcal O_{\bbP(1,1,2,3)}(6) \cdot \mathcal O_{\bbP(1,1,2,3)}(1),
$$
i.e., $C' = \mathcal O_{\bbP(1,1,2,3)}(6)|_\Sigma$ in $\mathrm{Pic}(\Sigma)$.

The restriction exact sequence
$$
0 \to \mathcal O_{\bbP(1,1,2,3)} \to \mathcal O_{\bbP(1,1,2,3)}(6) \to \mathcal O_{\bbP(1,1,2,3)}(6)|_\Sigma \to 0
$$
and the vanishing of $h^1(\bbP(1,1,2,3),\mathcal O_{\bbP(1,1,2,3)})$ imply that the map 
$$
H^0(\bbP(1,1,2,3),\mathcal O_{\bbP(1,1,2,3)}(6)) \to H^0(\Sigma,\mathcal O_{\bbP(1,1,2,3)}(6)|_\Sigma)
$$
is surjective, hence there exists a sextic $\Theta'$ of $\bbP(1,1,2,3)$ such that $C' = \Sigma \cap \Theta'$. Let $f_6(\mathbf u,v,w) = 0$ be an equation for $\Theta'$. As  $C_0'$ is tri-tangent to the line $\Delta'$, the set $C_0' \cap \left\{ u_0 = 0\right\}$ has cardinality $3$, whereas
$$
\deg C'|_{u_0 = 0} = \Theta'|_{u_0 = 0} \cdot \Sigma|_{u_0 = 0} = \mathcal O_{\bbP(1,1,2,3)}(6)^2 \cdot \mathcal O_{\bbP(1,1,2,3)}(1) = 6.
$$
Hence $\Theta'|_{u_0 = 0}$ is a nonreduced curve $2\ell'$ of $\bbP(1,2,3)_{[u_1:v:w]}$ such that $\ell' \cdot \Sigma|_{u_0 = 0} = 3$. This yields $f_6(\mathbf u,v,w)|_{u_0 = 0} = h_3(u_1,v,w)^2$ with $h_3$ a homogeneous cubic. Up to scaling, we have $h_3 = w + \alpha(u_1,v)$ with $\deg \alpha = 3$, and the change of variables $w \mapsto w + \alpha(u_1,v)$, which is an automorphism of $\bbP(1,1,2,3)$, yields 
$$
f_6(\mathbf u,v,w)|_{u_0 = 0} = w^2,
$$
thus 
$$
f_6(\mathbf u,v,w) = u_0f'_5(\mathbf u,v,w) + w^2
$$
for some homogeneous quintic $f'_5$. Hence $C'$ lies on the surface $\Theta'$ of equation
$$
u_0f'_5(\mathbf u,v,w) + w^2 = 0.
$$
As a consequence, $C'$ is the complete intersection in $\bbP(1,1,2,3)$ given by the following equations.
$$
u_0f'_5(\mathbf u,v,w) + w^2 = 0,\, v^3 = u_1h_5(\mathbf u,v,w).
$$
As $\bbP$ is cut out in $\bbP(1^2,2,3,5^2)_{[u_0:u_1:v:w:x_0:x_1]}$ by the equations $u_0x_0 = w^2$ and $u_1x_1 = v^3$ (see \color{purple}(\ref{eq:2 3 10 15 model})\color{black}) and the pullbacks of the hyperplanes of $\bbP^{17}$ are the quintics hypersurfaces of $\bbP(1^2,2,3,5^2)$, it makes it visible that $C'$ is a linear section of $\bbP$ in $\bbP^{17}$:
$$
C' = \bbP \cap \left\{ x_0 = -f'_5(\mathbf u,v,w),\, x_1 = h_5(\mathbf u,v,w) \right\}.
$$
In other words, there exists a hyperplane section $S'\in |-K_\bbP|$ of $\bbP$ in $\bbP^{17}$ such that $C'$ is a hyperplane section of $S'$, i.e., $C' \in |-K_\bbP|_{S'}|$.
\end{proof}

\vspace{1cm}

\small I\scriptsize NSTITUT DE \small M\scriptsize ATHÉMATIQUES DE \small T\scriptsize OULOUSE (CNRS UMR 5219), \small U\scriptsize NIVERSITÉ \small P\scriptsize AUL \small S\scriptsize ABATIER, 31062 \small T\scriptsize OULOUSE CEDEX 9, \small F\scriptsize RANCE

\vspace{.2cm}
\normalsize \textit{E-mail address:} \texttt{bruno.dewer@math.univ-toulouse.fr}

\end{document}